%

\documentclass[aap,MSNbibl,citesort,dvips]{arximspdf}
\usepackage{algorithm,algorithmic}
\usepackage{graphicx}

%

\doi{10.1214/11-AAP761}
\volume{22}
\issue{3}
\pubyear{2012}
\firstpage{931}
\lastpage{970}

\makeatletter
\newcommand{\R}{\mathbb{R}}
\newcommand{\Z}{\mathbb{Z}}
\newcommand{\N}{\mathbb{N}}
%
%
%
%
\newtheorem{thmm}{Theorem}
\newtheorem{lem}[thmm]{Lemma}
\newtheorem{prop}[thmm]{Proposition}
\newtheorem{cor}[thmm]{Corollary}
\newproclaim{dfn}[thmm]{Definition}
\newproclaim{remark}{Remark}
%
\newcommand{\ssi}{\mathcal{I}}
\newcommand{\ssj}{\mathcal{J}}
\newcommand{\pff}{\mathcal{P}}
\newcommand{\st}{\mathcal{T}}
%
%
%
%
%
%
\newcommand{\eqdist}{\stackrel{d}{=}}
\newcommand{\convdist}{\stackrel{d}{\to}}
\newcommand{\IPC}{\operatorname{IPC}}
\newcommand{\IIC}{\mathrm{IIC}}
\newcommand{\PGW}{\operatorname{PGW}}
\newcommand{\pgw}{\operatorname{PGW}}
\newcommand{\EDGE}{e}
\newcommand{\BOX}{B}
\newcommand{\HEIGHT}{h}
\newcommand{\LEFT}{\ell}
\newcommand{\BOXES}{\mathrm{BG}}
\makeatother

\begin{document}
\begin{frontmatter}

\title{Invasion percolation on the Poisson-weighted infinite tree}
\runtitle{Invasion percolation on the PWIT}

\begin{aug}
\author[A]{\fnms{Louigi} \snm{Addario-Berry}\corref{}\thanksref
{t1}\ead
[label=e1]{louigi@gmail.com}},
\author[B]{\fnms{Simon} \snm{Griffiths}\ead[label=e2]{sgriff@impa.br}}
\and
\author[C]{\fnms{Ross J.} \snm{Kang}\thanksref{t3}
\ead[label=e3]{ross.kang@gmail.com}}
\runauthor{L. Addario-Berry, S. Griffiths and R. J. Kang}

\thankstext{t1}{Supported by an NSERC Discovery Grant during this research.}
\thankstext{t3}{Supported by an NSERC Postdoctoral Fellowship during
this research.}

\affiliation{McGill University, IMPA and Durham University}

\address[A]{L. Addario-Berry\\
Department of Mathematics and Statistics\\
McGill University\\
1005-805 Sherbrooke West\\
Montreal, QC, K3A 2K6\\
Canada\\
\printead{e1}
}

\address[B]{S. Griffiths\\
IMPA\\
Estrada Dona Castorina 110\\
Rio de Janeiro, 22460-320, RJ\\
Brazil\\
\printead{e2}
}

\address[C]{R. J. Kang\\
School of Engineering \\
\quad and Computing Sciences\\
Durham University\\
South Road, Durham DH1 3LE\\
United Kingdom\\
\printead{e3}
}
\end{aug}

\received{\smonth{12} \syear{2009}}

%
\begin{abstract}
We study invasion percolation on Aldous' Poisson-weighted infinite
tree, and derive two distinct Markovian representations of the
resulting process. One of these is
the $\sigma\to\infty$ limit of a representation discovered by Angel~et al.
[\textit{Ann. Appl. Probab.}
\textbf{36} (2008) 420--466]. We also
introduce an exploration process of a randomly weighted Poisson
incipient infinite cluster.
The dynamics of the new process are much more straightforward to
describe than those of invasion percolation, but it turns out that the
two processes have extremely similar behavior.
Finally, we introduce two new ``stationary'' representations of the
Poisson incipient infinite cluster as random graphs on $\Z$ which are,
in particular,
factors of a homogeneous Poisson point process on the upper half-plane
$\R\times[0,\infty)$.
\end{abstract}

%
\begin{keyword}[class=AMS]
\kwd[Primary ]{60C05}
\kwd[; secondary ]{60G55}.
\end{keyword}

\begin{keyword}
\kwd{Invasion percolation}
\kwd{Prim's algorithm}
\kwd{Poisson-weighted infinite tree}
\kwd{percolation}
\kwd{random trees}.
\end{keyword}

\end{frontmatter}
%

\section{Introduction}\label{secintro}
Invasion percolation (or \textit{Prim's algorithm}~\cite{prim57shortest})
was first introduced by Jar{\'n}ik \cite
{jarnik30mst} as a procedure for
constructing the minimum weight spanning tree of a connected, weighted,
finite graph.
The procedure, however, may be applied to many infinite graphs without
modification. Given a connected graph $G=(V,E)$, a starting node $v_0 \in V$ and an
injective weight function $w\dvtx E \to\R$, the algorithm grows a component
from the root inductively, adding at each step the lowest weight edge
leaving the current component.
\begin{algorithm}[h!]
\begin{algorithmic}[1]
\item[]\textbf{For each }$i=0,1,\ldots:$
\item[]\textbf{1.} If $\{v_0,\ldots,v_i\}=V$, then stop.
\item[]\textbf{2.} Otherwise, let $e =uv \in E$ be the smallest weight
edge for
which\\
\item[]\quad$u \in\{v_0,\ldots,v_i\}$, $v \notin\{v_0,\ldots,v_i\}$.
\item[]\textbf{3.} Let $v_{i+1} = v$, and let $e_{i+1}=uv$.
\end{algorithmic}
\end{algorithm}

%

(Throughout the paper, the graphs and weight functions we consider will
be such that step 2, above, is well defined; i.e., the infimum of the
weights of all edges from $\{v_0,\ldots,v_i\}$ to the rest of the graph
is attained.)
If $|V|< \infty$, the resulting graph with vertex set $\{v_0,\ldots
,v_{|V|-1}\}$ and edge set $\{e_1,\ldots,e_{|V|-1}\}$ is the unique
minimum weight spanning tree of $G$. However, in general, for an
infinite graph, this procedure does not necessarily build a spanning
subgraph of $G$. In particular, if there is an infinite path leaving
$v_0$ and containing only edges of weight at most $h$, for some $h \in
\R$, then no vertex $v$ for which $\inf_{e \ni v} w(e) > h$ will ever
be explored.

Prim's algorithm was rediscovered under the name of invasion
percolation in the 1980s \cite
{lenormand1980description,chandler1982capillary}. The strong connection
between invasion percolation and critical percolation was immediately
recognized---a particularly nice example of this connection is
contained in the fact that invasion percolation on~$\Z^d$ occupies an
asymptotically zero proportion of the vertices of $\Z^d$ if and only if
the percolation probability at the critical point $p_c(\Z^d)$ is zero
(see Newman~\cite{newman1997topics}, page 24).

The well-known heuristic that percolation-style processes on $\Z^d$
should behave like percolation on a regular tree when $d$ is large led
Angel, Goodman, den Hollander and Slade~\cite{angel2006ipr} to study
invasion percolation on regular trees. Angel et al. prove far too many
results for us to summarize here. Among other topics, they study volume
growth and boundary growth, spectral and Hausdorff dimensions for the
set of vertices explored by invasion percolation. We hereafter refer to
this set---and to the subgraph induced by this set, which will cause
no confusion---as the \textit{invasion percolation cluster}. Their
results all stem from a Markovian representation of the invasion
percolation cluster as---informally---a single infinite path, at each
point of which is attached an independent random tree. (These trees are
``subcritical Bernoulli percolation clusters'' with a parameter which
becomes increasingly close to critical the further along the backbone
they are attached.) One of the major purposes of our paper is to
explore a new approach to this structural representation which applies
in some generality, so we take a moment to explain the representation
itself in more detail.

For the duration of the introduction, for integers $\sigma\geq2$, let
$\st_{\sigma}$ denote the infinite rooted $\sigma$-regular tree (each
node except the root has degree $\sigma+1$), with each edge $e$ labeled
by $U_e \sim\operatorname{Uniform}[0,\sigma]$ independently of all
other edges.
In general, for a weighted rooted graph $G$, let $G(p)$ be the
connected subgraph of~$G$ containing the root when all edges of weight
greater than $p$ are discarded. Let $p_1 = \inf\{p \dvtx\st_{\sigma
}(p) \mbox{ is infinite}\}$.
Then with probability one, $1 < p_1 < \sigma$, and $\st_{\sigma}(p_1)$
is infinite and contains precisely one edge $e_1$ of weight~$p_1$ (this
is not hard, and in particular follows from Corollary~\ref{corbackbone}
in Section~\ref{secrw}).
The component of $\st_{\sigma}(p_1)$ containing the root when $e_1$ is
removed is finite (or else we never would have explored edge~$e_1$).
Let $\st_{\sigma,1}$ be the component of $\st_{\sigma}(p_1)$
\textit{not}
containing the root when~$e_1$ is removed; then $\st_{\sigma,1}$, which
we view as rooted at its unique vertex which is an endpoint of $e_1$,
is infinite and contains only edges of weight less than~$p_1$.
Supposing we have defined $p_1,\ldots,p_i$, $e_1,\ldots,e_i$, and
$\st
_{\sigma,1},\ldots,\st_{\sigma,i}$, let $p_{i+1}=\inf\{p \dvtx
\st_{\sigma
,i}(p) \mbox{ is infinite}\}$. Then with probability one, $1 < p_{i+1}
< p_{i}$, and $\st_{\sigma,i}(p_{i+1})$ is infinite and contains
precisely one edge $e_{i+1}$ of weight $p_{i+1}$, which separates the
root of~$\st_{\sigma,i}$ from infinity. We define $\st_{\sigma
,i+1}$ to
be the component of $\st_{\sigma,i}(p_{i+1})$ not containing the root
when $e_{i+1}$ is removed, and root this tree at its unique vertex
which is an endpoint of $e_{i+1}$.

Now let $P=\{f_i\}_{i=1}^{\infty}$ be the unique path starting from the
root of $\st_{\sigma}$ and passing through all of $\{e_j\}
_{j=1}^{\infty
}$ (so $P$ is only a.s. defined). This path is called the \textit{
backbone} of the invasion percolation cluster.
The components of the invasion percolation cluster when all edges in $\{
e_{j}\}_{j=1}^{\infty}$ are removed are called \textit{ponds}; Angel
et al. also study the sizes of these ponds. There is further
interesting recent work on invasion percolation: on the sizes of ponds
for invasion percolation in $\Z^2$~\cite
{damron2008relations,damron2009outlets} and on rescaled invasion
percolation on trees~\cite{angel09rescaled}.
For integers $n \geq1$, let $W_n =W_n(\st_{\sigma}) = \sup_{j \geq n}
U_{f_j}$. Angel et al. term the process $\{W_n\}_{n=1}^{\infty}$ the
\textit{backbone forward maximal process} of $\st_{\sigma}$. $W_n$ is
nonincreasing and has $\lim_{n \to\infty} W_n=1$.
Note that $W_n > W_{n+1}$ only when~$f_n$ is one of the edges $e_j$, in
which case $W_n=U_{f_n}=p_j$.
Angel et al. prove that $\{W_n\}_{n=1}^{\infty}$ is a Markov process
and specify both its transition probabilities and its large-$n$
rescaled behavior.

The removal of the vertices and edges of $P$ separates the cluster into
components of finite size. Suppose $T$ is one such cluster and that its
neighbor on the path $P$ has distance $n$ from the root. Then Angel
et al. show that~$T$ is distributed as $\st_{\sigma}(W_n)$ conditioned
to stay finite, independently of all other components. This fact and
the results about the backbone forward maximal process mentioned in the
preceding paragraph form the heart of their structural results.

In this paper we introduce a new mechanism for studying invasion
percolation on randomly weighted trees, which can in particular give a
new perspective on the structural results of Angel et al. The
methodology works in some generality---in fact, parts of it are most
easily formulated as statements about invasion percolation on graphs
with deterministic weights. To apply such results, one then needs to
check that the hypotheses hold a.s. in a randomly weighted tree under
consideration (which in practice is always a trivial matter). We have
chosen to present our results in the setting where they are the most
simple and striking, which is that of the Poisson-weighted infinite
tree, or PWIT.

Informally, the PWIT can be described as follows. The root $r$ has a
countably infinite number of children $v_1,v_2,\ldots.$ The edges
$rv_1,rv_2,\ldots$ are assigned weights: for each $i \geq1$ the edge
$rv_i$ is weighted with the position of the $i$th point of a
homogeneous Poisson process of rate $1$ on $[0,\infty)$. [Equivalently,
starting from an infinite sequence of independent $\operatorname
{Exponential}(1)$ random variables $E_1,E_2,\ldots,$ for each $i$ the
edge $rv_i$ is given weight $E_1+\cdots+E_i$.]
This construction is repeated independently and recursively at each
child of the root. We may view the nodes of the PWIT as labeled by
$\bigcup_{i=0}^{\infty} \N^i$, so that the root has label
$\varnothing$
and in general, node $n_1n_2\cdots n_k$ has parent
$n_1n_2\cdots n_{k-1}$ and children $\{n_1n_2\cdots n_kn\}_{n \in\N}$;
however, this labeling will not play a major role in the paper.

The PWIT shows up as a standard large-$n$ limit for combinatorial
optimization problems on the complete graph $K_n$; see the excellent
survey paper by Aldous and Steele~\cite{aldous2004omp}
for details of how. Our case is
no exception; as one consequence of our study, we obtain novel proofs
of the main results of~\cite{mcdiarmid97mst}, about the early behavior
of Prim's algorithm on $K_n$ with i.i.d. uniform weights. Our main
results, however, link invasion percolation on the PWIT with the
Poisson \textit{incipient infinite cluster}---IIC, for short---constructed for general critical branching processes by Kesten \cite
{kesten1986subdiffusive}, but
earlier in the Poisson case by Grimmett
\cite{grimmett1980random}. The Poisson IIC is, informally, a~critical
Poisson Galton--Watson tree---$\pgw(1)$, for short---conditioned to
be infinite. There are at least two natural ways to formalize this
statement, but they both yield the same limiting construction, which we
now describe. Start with a~single, one-way infinite path, and then make
each node of the path the root of an independent copy of $\pgw(1)$. The
resulting infinite tree is the Poisson IIC, which we denote by $\st
_{\IIC}$.

For the remainder of the introduction, let $\st_0$ be a random weighted
tree with the distribution of the subgraph of the PWIT explored by
invasion percolation, with vertices $\{v_0,v_1,\ldots\}$ in order of
exploration, and let $\{W_i\}_{i=1}^{\infty}$ be its forward maximal
process. (We have not yet proved that $\st_0$ \textit{has} a~forward
maximal process, although the proof is straightforward---in
particular, this fact follows from Corollary~\ref{corbackbone} in
Section~\ref{secrw}.) Also, for any tree\vspace*{-2pt} $T$ and vertex $v$ of $T$,
let $T^{(v)}$ denote $T$ re-rooted at $v$. For two rooted random graphs
$G,H$, we write $G \eqdist H$ to mean $G$ and $H$ have the same
distribution in the local weak sense (i.e., neighborhoods of finite
order of the root have the same distribution in both graphs;\vspace*{-2pt} see \cite
{aldous2004omp}, Section~2, for more details). Similarly, we write
$G_n \convdist G$ to denote local weak convergence of a sequence $\{
G_n\}$ of rooted random graphs to a limiting random graph $G$. (This
notion of convergence in distribution deals only with the topological
structure of the graph, so in particular ignores any edge weights of
the graphs under consideration.)

Let $\pff$ be a homogeneous Poisson process of rate $1$ in the upper
half-plane $\R\times[0,\infty)$. Given two random variables $X$ and
$Y$, we say a random variable~$X$ is a \textit{factor} of $Y$ if almost
surely $X=f(Y)$ for some deterministic function~$f$. (Usage of this
term has not been fully standardized; ours agrees with that of \cite
{holroyd2009poisson}.)
The first main theorem of our paper is the following.
\begin{thmm}\label{main1}
There exist two $\pff$-a.s. distinct random trees $T=T(\pff)$,
$T'=T'(\pff
)$ with vertex set $\Z$ such that:
\begin{longlist}[(a)]
\item[(a)] in $T$ there is a unique infinite rightward path from each
vertex $\pff$-a.s.;
\item[(b)] in $T'$ there is a unique infinite leftward path from each
vertex $\pff$-a.s.;
\item[(c)] neither $T$ nor $T'$ is a factor of the other.
\end{longlist}
Furthermore, setting $U=T$ or $U=T'$, we have:
\begin{longlist}[(d)]
\item[(d)] for any $n \in\Z$, $U(\pff+n)=U(\pff)+n$;
\item[(e)] for any $n \in\Z$, $U^{(n)}$ is distributed as $\st
_{\IIC}$.
\end{longlist}
\end{thmm}

This theorem seems very similar in spirit to results of Ferrari, Landim
and Thorisson~\cite{ferrari2004poisson}, on tree and forest factors of
Poisson processes in $\R^d\times\R$, $d \geq1$ (with the final copy
of $\R$ viewed
as a time dimension). The graph they define is a tree when $d=1,2$ and
a forest when $d \geq3$. Some particular similarities of note: Ferrari
et al. explain how to use a preorder traversal (or \textit{depth-first
search}, a procedure quite similar to invasion percolation) of the
points of the Poisson process in order to view their trees as having
vertex set $\Z$; their graphs also have only one end (only one infinite
path leaving any vertex);
their graphs are built by joining each point to its first
time-successor within $\R^d$-distance one, yielding a ``coalescing
random walk'' interpretation of the construction, that is, reminiscent of
our random-walk description of the forward maximal process in Section
\ref{secrw}. Ferrari et al. do not explicitly identify the
distribution of the graph they define, but it would be very interesting
to know if it can be meaningfully interpreted as a higher-dimensional
analog of the Poisson IIC. Holroyd and Peres \cite
{holroyd2003trees} have also studied
tree and forest factors of Poisson point processes in $\R^d$, and
Holroyd and Peres~\cite{holroyd2003trees}, Tim{\'a}r \cite
{timar2004} have studied tree and forest factors of
general point process in $\R^d$.
Also, factors of one-dimensional Poisson processes that commute with
discrete shifts [i.e., as in Theorem~\ref{main1}(d), above] are one of
the subjects studied in~\cite{gurel09poisson}.

As a byproduct of the proof of Theorem~\ref{main1}, we will also obtain
the following theorem, which is a ``PWIT analog'' of \cite
{angel2006ipr}, Theorem 1.2.
\begin{thmm}\label{main2}
$\st_0^{(v_n)} \convdist\st_{\IIC}$ as $n \to\infty$.
\end{thmm}

Before stating our third theorem (in fact, the first two theorems lean
heavily on tools introduced in proving the third), we have a few more
concepts to introduce.
For each edge $e$ of $\st_{\IIC}$, let $X_e \sim\operatorname
{Uniform}[0,1)$, independently of all other edges. Let
$e_0=v_0v_1,e_1=v_1v_2,\ldots$ be the edges of the unique infinite path
(the backbone) in $\st_{\IIC}$, let $M_0=0$, and for integers $i \geq
1$, let $M_i = \max_{0 \leq j < i} X_{e_i}$. Now let $\st_{\IIC}^*$ be
the subtree of $\st_{\IIC}$ obtained as follows.
Let $v$ be a vertex of $\st_{\IIC}$, and let $v_i$ be the nearest
vertex of the backbone to $v$.\vadjust{\goodbreak} If any edge of the path from $v$ to
$v_i$ has weight greater than $M_i$, then remove $v$ from the tree. Do
this for each $v \in\st_{\IIC}$. Finally, remove $v_0$ and root at
$v_1$. The resulting subtree of $\st_{\IIC}$ is $\st^*_{\IIC}$.
\begin{thmm}\label{main3}
There is a continuous, strictly decreasing bijective map $q\dvtx
[1,\infty
)\to(0,1]$ such that $(q(W_1),q(W_2),\ldots) \eqdist(M_1,M_2,\ldots
)$, in the sense of finite-dimensional distributions.
Furthermore, $\st_0$ conditional on $(W_1,W_2,\ldots)$ is distributed
as $\st_{\IIC}^*$ conditional on $(M_1,M_2,\ldots
)=(q(W_1),q(W_2),\ldots
)$, in the local weak sense.
\end{thmm}

It is worth mentioning that the Markovian nature of $(W_1,W_2,\ldots)$
can be immediately deduced from this theorem. Given $M_i$, $M_{i+1}$ is
greater than~$M_i$ precisely if $X_{e_{i+1}} \in(M_i,1]$, in which
case $M_{i+1}=X_{e_{i+1}}$. Thus, given $M_i$, $M_{i+1}$ is equal to
$M_i$ with probability $(1-M_i)$, and otherwise is uniform on
$(M_i,1]$. Translating this to~$W_i$ immediately yields the ``PWIT
analog'' of the Markov process construction (\cite{angel2006ipr}, Proposition
3.1).

\subsection{\texorpdfstring{The PWIT as a $\sigma\to\infty$ limit of $K_{\sigma+1}$ or of $\st_{\sigma}$}
{The PWIT as a sigma to infinity limit of K_{sigma+1}$ or of $T_{sigma}$}}

We mention in passing that with not much effort, it is possible to
prove convergence of invasion percolation on $\st_{\sigma}$ or on
$K_{\sigma+1}$ to invasion percolation on the PWIT, in a stronger sense
than the local weak sense.
Let ${\mathbf U} = (U_{1}^*,U_{2}^*,\ldots,U_{\sigma}^*)$ be the order
statistics of $\sigma$ independent $\operatorname{Uniform}[0,\sigma]$ random
variables.
Then~$U$ tends weakly to the vector of points of a homogeneous rate one
Poisson process $\pff$ on $[0,\infty)$.
More importantly for our current purpose, the vector
$(U_1,\ldots,U_{\lfloor\sqrt{\sigma}\rfloor})$ has total variation
distance $O(\sigma^{-1/2})$ from the vector of the first $\lfloor
\sqrt
{\sigma}\rfloor$ points of $\pff$.
It follows, in a sense that can easily be made precise, that \textit{the
first $o(\sqrt{\sigma})$ steps of invasion percolation on $\st
_{\sigma
}$} together have
total variation distance $o(1)$ from the same number of steps of
invasion percolation on the PWIT. A similar statement holds for the
first $o(\sqrt{\sigma})$ steps
of invasion percolation on $K_{\sigma+1}$. This in particular yields
new proofs of the explicit error bounds derived in \cite
{mcdiarmid97mst} for the behavior of the early stages of Prim's
algorithm on $K_{\sigma+1}$.
The details are straightforward, and we leave them to the interested reader.

\subsection{Outline} In Section~\ref{secpwitpp} we construct the
building blocks on which the remainder of the article rests. In
particular, we describe a different way to view invasion percolation,
in terms of a ``note-taking'' procedure that accompanies the invasion
percolation procedure, and in the special case of invasion percolation
on trees contains all the information required to reconstruct the
original procedure. To best understand this note-taking procedure we
introduce the ``box process'' (Definition~\ref{dfnboxes}), which gives
us a clear picture of the connection mechanism of invasion percolation.
The box process also allows for an understanding of a related ``two-way
infinite'' invasion percolation process, which can be seen as
describing the behavior of invasion percolation far from the root.
Furthermore, with the introduction of the\vadjust{\goodbreak} ``box graph'' in Section \ref
{boxgraphs}, the box process itself becomes an interesting object of
study, and we derive some of its fundamental properties. Throughout
Section~\ref{secpwitpp}, our studies are in the deterministic setting.

In Section~\ref{secinvadepwit} we apply our tools to study $\st_0$. In
particular, we prove the PWIT analog of the forward maximal
representation of $\st_0$ in more detail. Section~\ref{secinvadepwit}
also contains some results concerning ballot style theorems, queueing
processes and Poisson Galton--Watson duality that are of use in proving
Theorems~\ref{main1}--\ref{main3}.

Finally, in Section~\ref{secstatproc} we prove a number of results
concerning the box graph and the stationary process. In particular, we
find that these graphs resemble the Poisson IIC locally. Using these
results, we deduce Theorems~\ref{main1}--\ref{main3}.

\section{Redrawing invasion percolation}\label{secpwitpp}
In this section we describe a different way to view invasion
percolation which is at the heart of most of the results of this paper.
First, imagine keeping notes of the local edge landscape we see as we
perform invasion percolation, as follows.
At step $i$ of invasion percolation, we explore vertex $v_i$ and record
the weights of all edges leaving~$v_i$ and heading into new territory
by putting marks on the vertical half-line $\{i\}\times[0,\infty)$
whose heights are the weights of these edges.
(When performing invasion percolation on a rooted tree $T$, the edges
``heading into new territory'' are precisely the edges from $v_i$ to
its children in $T$.) Running the invasion percolation process until it
terminates (or forever) then yields some set
$P$ of points in the positive quadrant.

Formally, suppose $G=(V,E)$ is a weighted graph with all edge weights
distinct, and with distinguished vertex $v_0$. Then
the invasion percolation procedure defines an infinite subtree $T$ of
$G$, with vertex set $\{v_0,v_1,\ldots\}$.
For each $i \geq0$, let $p_i(1),p_i(2),\ldots,p_i(j_i)$ be the weights
of the edges from $v_i$ to $V\setminus\{v_0,\ldots,v_i\}$, in
increasing order of weight.\vspace*{-2pt} Let $P^i =P^i(G) = \{(i,p_i(j))\}_{j=1}^{j_i}$,
and let $P=P(G)=\bigcup_{i=0}^{|V|-1} P^i$.

In general, it is not possible to \textit{reconstruct} the steps taken by
invasion percolation by considering only the set $P$. However, this
\textit{is} possible for invasion percolation on trees, and we now
explain how.
In order to do so, we introduce an inductive procedure for building a
tree, given a set of points $P \subset\R^2$ and an interval $\ssi
\subset\Z$ of consecutive integers.
We write $n_{\ssi} =\inf\{n \in\ssi\} \geq-\infty$ and $m_{\ssi} =
\sup\{n \in\ssi\} \leq\infty$.

For notational convenience, given $X \subseteq\mathbb{R}^2$, we write
$|X|_P$ for $|P\cap X|$. Also, for a point $p \in\mathbb{R}^2$, we
write $x(p)$ for the $x$-coordinate and $y(p)$ for the $y$-coordinate.
Let us assume the following:
\begin{longlist}[1.]
\item[1.]\hypertarget{IPWIT1} All points of $P$ lie in the upper
half-plane. No
bounded set contains unboundedly many points.
\item[2.]\hypertarget{IPWIT2} For any $n \in\ssi$, there exists $k>0$
with $n-k
> n_{\ssi}-1$ for which $|[n-k,n)\times[0,\infty)|_P \ge k$.
\item[3.]\hypertarget{IPWIT3} $|P((-\infty,\infty)\times\{y\})| \le1$ for
any $y \in\mathbb{R}$.\vadjust{\goodbreak}
\end{longlist}
If $P$ satisfies these three conditions, we say it is \textit{reasonable}
(or \textit{$\ssi$-reasonable}, if $\ssi$ is not
clear from context). (Here, as well as later, we state deterministic
requirements for the point set $P$;
these requirements---and therefore, the results derived from them---will hold almost surely for all the random point sets we consider. In
particular, the reader will always be safe thinking of~$P$ as a Poisson
point set of intensity one in the upper half-plane.)
We start from an empty set $P_{n_{\ssi}}=\varnothing$, from which we will
build an increasing sequence of subsets of $\mathcal{N}_0$. The
following procedure requires $n_{\ssi} > -\infty$.

\begin{algorithm}
\begin{algorithmic}[1]
\item[]\textbf{For each }$i=n_{\ssi},n_{\ssi}+1,\ldots, m_{\ssi
}$:\\
\item[]\textbf{1.} Let $p_{i+1}=p_{i+1}(P,\ssi)$ be the point of
$([n_{\ssi},i+1)\times[0,\infty))\cap(P\setminus P_i)$
\item[] \quad minimizing
$y(p_{i+1})$.
\item[]\textbf{2.} Let $P_{i+1} =P_{i+1}(P,\ssi) = P_i \cup\{p_{i+1}\}
$, and let
$e_{i+1}=e_{i+1}(P,\ssi)=(i+1,$
\item[] \quad$\lfloor x(p_{i+1}) \rfloor)$.
\end{algorithmic}
\end{algorithm}

We refer to this procedure as \textit{point set invasion percolation}.
Since $P$ is reasonable, the procedure is well defined. The resulting
graph $\IPC(P,\ssi)$ has vertex set $\ssi$ and edge set $\{e_i
\dvtx n_{\ssi
} < i < m_{\ssi}+1\}$.
(We write $i < m_{\ssi}+1$ instead of $i \leq m_{\ssi}$ since we may
have $m_{\ssi}=\infty$, but $i$ is always finite.)
An example is shown in Figure~\ref{example}.
Note that $\IPC(P,\ssi)$ is a tree, which we view as rooted at
$n_{\ssi}$.
We often also view $\IPC(P,\ssi)$ as a weighted tree in which edge
$e_i$ has weight $y(p_i)$. In general in this section we work in the
deterministic setting. However, since our eventual aim is to link this
work to invasion percolation on randomly weighted trees we briefly
discuss how this can be done.

%
\begin{figure}[b]

\includegraphics{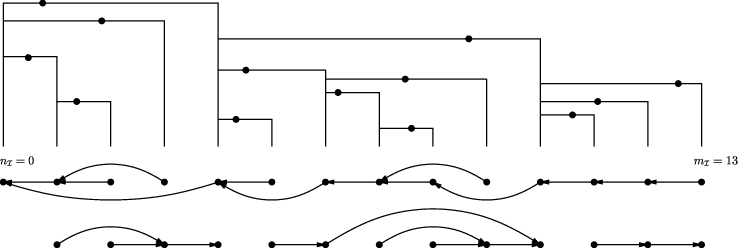}

\caption{Top, an $\ssi$-reasonable set of points $P$, with $\ssi=\{
0,\ldots,13\}$, and the corresponding boxes (defined in
Definition~\protect\ref{dfnboxes}). Middle, the tree $\IPC(P,\ssi)$.
Bottom, the forest $\BOXES(P,\ssi)$, defined at the start of Section
\protect\ref{boxgraphs}. All arrows point from child to parent.}
\label{example}
\end{figure}

%
%

\subsection*{IPC of the PWIT}
Now suppose that $\st$ is an instance of the PWIT, and let $\st_0$ be
the subtree of $\st$ explored by invasion percolation.
The following lemma is then immediate.

\begin{lem}
$\IPC(P(\st),\N)$ and $\st_0$ are identical, and for each $i \in\N$,
$w(e_i) = y(p_i)$.
\end{lem}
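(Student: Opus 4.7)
The plan is to prove the lemma by straightforward induction on the exploration step $i$, showing that under the identification $i \leftrightarrow v_i$, the partial graph built by point set invasion percolation on $P(\st)$ after $i$ steps agrees with the partial tree built by invasion percolation on $\st$, with matching edge weights.

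Before launching the induction, I would briefly verify that $P(\st)$ is $\N$-reasonable almost surely, so that the procedure defining $\IPC(P(\st),\N)$ is well-defined. Condition (\ref{IPWIT1}) holds because the weights in any column $j$ are the points of a rate-one Poisson process on $[0,\infty)$, together with the observation that only finitely many such columns intersect any bounded $x$-range. Condition (\ref{IPWIT2}) is immediate from the fact that column $0$ alone (corresponding to the root) contains infinitely many points in any neighborhood of $0$. Condition (\ref{IPWIT3}) holds because the joint law of the edge weights in the PWIT is absolutely continuous, so with probability one no two weights coincide.

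The crux is the inductive step. The key observation is that because $\st$ is a tree, the PWIT-parent of any vertex must be invaded before the vertex itself; hence when $v_j$ is first explored at step $j$, every edge incident to $v_j$ other than the one it was reached by goes to a PWIT-child that is unexplored. So the marks placed in column $j$ at step $j$ record exactly the weights of all edges from $v_j$ to its PWIT-children. Consequently, after $i$ steps, the points of $P(\st)\cap[0,i+1)\times[0,\infty)$ are in explicit bijection with the set of PWIT-edges leaving $\{v_0,\ldots,v_i\}$ to their PWIT-children, and those already-invaded edges correspond precisely to the points in $P_i$. The remaining points in $[0,i+1)\times[0,\infty)\cap(P\setminus P_i)$ therefore match, one-for-one and weight-for-weight (via the $y$-coordinate), the candidate edges available to invasion percolation at step $i+1$. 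The minimum-$y$ rule selects the same edge the invasion would, and the $x$-coordinate $\lfloor x(p_{i+1})\rfloor = j$ identifies its endpoint $v_j$ in the already-built tree, so $e_{i+1}$ matches on both sides.

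I do not expect a serious obstacle here: the statement is essentially by construction, once one unwinds definitions. The only point requiring a moment's care is the tree-specific fact that when $v_j$ is first explored, none of its PWIT-children have been reached by an earlier step — this is what makes the set of recorded marks in column $j$ a complete record of $v_j$'s outgoing invasion candidates, and it is exactly where the argument would fail on a general graph (where some "children" might already be in the explored set via another route). Conveniently, the lemma is restricted to trees, so this difficulty does not arise.
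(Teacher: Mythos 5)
Your argument is correct. Note, though, that the paper does not actually supply a proof of this lemma; it introduces it with the remark ``The following lemma is then immediate,'' treating it as a direct consequence of the definition of $P(\st)$ and the observation that on a rooted tree the edges ``heading into new territory'' from $v_i$ are precisely the edges to $v_i$'s children. Your inductive write-up is exactly the unwinding of definitions that the authors regard as obvious: the key tree-specific fact you isolate (that when $v_j$ is first explored, all of its PWIT-children are still unexplored, so column $j$ records the complete set of candidate edges at $v_j$) is the same observation the paper makes parenthetically when defining $P(G)$, and the bijection between $P\cap[0,i+1)\times[0,\infty)$ and child-edges of $\{v_0,\dots,v_i\}$ then forces the two minimum-weight selections to agree, with $w(e_i)=y(p_i)$. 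The preliminary verification that $P(\st)$ is $\N$-reasonable a.s.\ is also appropriate (the paper does this implicitly, noting that $P(\st)$ a.s.\ falls in the class of pointsets considered in Section~2), so your proof is a faithful and careful rendering of the intended ``immediate'' argument rather than an alternative route.
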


When performing invasion percolation on $\st$, for all $i$,
$P^i(\st)$ is a Poisson point process of rate $1$ on the vertical
half-line $\{i\} \times[0,\infty)$,
and $P(\st)$ is the union of these point processes.

We remark that since all points in $P$ have integer $x$-coordinates,
the floor in step~2, above, has no effect.
The use of the floor is to ensure that if a~point $p =(x,y)\in P$ is
replaced by a point $p'=(x',y)$, as long as $\lfloor x \rfloor=\lfloor
x' \rfloor$, the resulting
graph $\IPC(P,\ssi)$ will be unchanged. As a result we obtain the
following corollary.
\begin{cor}\label{ipcpoisson}
Let $\mathcal{P}$ be a Poisson point process of rate $1$ on $[0,\infty)
\times[0,\infty)$. Then $\IPC(\mathcal{P},\N)$ and $\st_0$ are
identically distributed.
\end{cor}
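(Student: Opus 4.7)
The key observation is the one the authors have already flagged immediately above the statement: the point set IPC procedure only uses a point $p = (x,y) \in P$ through its $y$-coordinate and through the integer $\lfloor x \rfloor$. Indeed, the selection rule in step 1 requires $p \in [n_\ssi, i+1)\times [0,\infty)$, and for $\ssi = \N$ (so $n_\ssi = 0$) this condition $0 \le x < i+1$ is equivalent to $0 \le \lfloor x \rfloor \le i$; and step 2 then reads off $\lfloor x(p_{i+1})\rfloor$. So my plan is to show that applying the map $\phi:(x,y)\mapsto(\lfloor x\rfloor,y)$ pointwise to $\mathcal{P}$ does not change the IPC tree, and that $\phi(\mathcal{P})$ has exactly the distribution of $P(\st)$, after which the preceding lemma finishes the job.

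First I would prove a deterministic lemma: if $P$ is $\N$-reasonable and $\phi(P)$ is also $\N$-reasonable, then $\IPC(P,\N) = \IPC(\phi(P),\N)$. This is just induction on $i$: by the paragraph above, at each step both processes have the same pool of candidate points (up to the common $y$-coordinates and $\lfloor x\rfloor$ values), select the candidate of minimum $y$-coordinate, and attach the resulting edge to the vertex $\lfloor x(p_{i+1})\rfloor$; these three pieces of information are invariant under $\phi$. No calculation is needed — just unwinding the definitions in step 1 and step 2.

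Next I would check that a rate-$1$ Poisson point process $\mathcal{P}$ on $[0,\infty)^2$ is almost surely $\N$-reasonable and that $\phi(\mathcal{P})$ is too. Condition \eqref{IPWIT1} (local finiteness, points in the upper half-plane) is immediate; condition \eqref{IPWIT3} (distinct $y$-coordinates) holds a.s.\ for $\mathcal{P}$ since the projection of $\mathcal{P}$ onto the $y$-axis is itself a rate-$\infty$-like simple process, and $\phi$ preserves $y$-coordinates; condition \eqref{IPWIT2} follows from the strong law of large numbers applied to the i.i.d.\ counts $|[n,n+1)\times[0,\infty)|_\mathcal{P}$ (or, for the rarely-needed tail estimate, a Poisson concentration bound).

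Finally I would identify the law of $\phi(\mathcal{P})$. By the restriction and mapping properties of Poisson processes, the restrictions $\mathcal{P}\cap([n,n+1)\times[0,\infty))$ for $n\in\N$ are independent Poisson processes of unit intensity on strips, and the pushforward of each under $\phi$ is a unit-rate Poisson process on the vertical half-line $\{n\}\times[0,\infty)$. Thus $\phi(\mathcal{P})$ is distributed as a union of independent rate-$1$ Poisson processes on $\{n\}\times[0,\infty)$ for $n\in\N$, which is precisely the law of $P(\st)$ as described in the paragraph preceding the corollary. Combining the three steps,
\[
\IPC(\mathcal{P},\N) \;=\; \IPC(\phi(\mathcal{P}),\N) \;\stackrel{d}{=}\; \IPC(P(\st),\N) \;=\; \st_0,
\]
where the two equalities are the deterministic lemma and the preceding lemma respectively, and the distributional equality is the identification of $\phi(\mathcal{P})$.

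There is no real obstacle — the proof is essentially a one-line observation about the floor, plus a standard Poisson restriction/mapping argument — but the delicate point to state carefully is that the floor in step 2 of point set IPC is precisely what makes the procedure invariant under $\phi$, and without it the corollary would fail.
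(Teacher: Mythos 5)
Your proof is correct and is essentially the same as the paper's, just written in the opposite direction: the paper starts from $P(\st)$ and adds an independent $\mathrm{Uniform}[0,1)$ shift to each $x$-coordinate to produce a Poisson process $P'$ on $[0,\infty)^2$ with $\IPC(P',\N)=\IPC(P(\st),\N)$, whereas you start from the Poisson process $\mathcal{P}$ and apply the floor map $\phi$ to collapse back to the discrete picture. Both hinge on the identical observation that the floor in step 2 of point set invasion percolation makes $\IPC(\cdot,\N)$ invariant under changes to the fractional parts of $x$-coordinates, and indeed the two couplings $(P(\st),P')$ and $(\phi(\mathcal{P}),\mathcal{P})$ have the same joint law.
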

\begin{pf}
Associate to each point $p = (x(p),y(p))$ of $P(\st)$ an independent
uniform $U_p$, and let $p'$ be the point $(x(p)+U_p,y(p))$. Then $P' =
\{p' \dvtx p \in P\}$ is a Poisson point process of rate $1$ on
$[0,\infty)
\times[0,\infty)$, and
$\IPC(\mathcal{P}',\N)$ and $\IPC(P,\N)$ are identical. The result follows.
\end{pf}

This corollary reduces the study of the distributional properties of
$\st_0$ to that of the distributional properties of $\IPC(\mathcal
{P},\N
)$, where $\mathcal{P}$ is a Poisson point process of rate $1$ on
$[0,\infty) \times[0,\infty)$.

We also demonstrate how the two examples of invasion percolation
described in Section~\ref{secintro} can be encoded by suitable point processes.

\subsection*{IPC of an infinite randomly weighted $\sigma$-regular tree}
Let $\st_\sigma$ be the rooted regular tree with forward degree
$\sigma
\ge2$. We can model invasion percolation on~$\st_\sigma$ as follows:
for each $n \in\ssi= \N$, choose $\sigma$ independent, uniformly
random points of $[n,n+1)\times[0,\sigma)$ (or of $\{n\}\times
[0,\sigma)$).
Let $P$ be the union of all these points.

\subsection*{The minimum spanning tree of the complete graph}
Let $K_{\sigma+ 1}$ be the complete graph on $\sigma+ 1$ vertices. We
may approximately model invasion percolation on a randomly weighted
$K_{\sigma+ 1}$ as follows:
for each $n \in\ssi= \{0,\ldots,\sigma\}$, choose $\sigma- n$
independent, uniformly random points from the set $[n,n+1)\times
[0,\sigma)$.
Let $P$ be the union of all these points.

This representation is not exact due to the cycles in $K_n$. For example,
it is possible that the second least weight leaving the starting vertex
is on the edge between the second and third vertices visited by Prim's
algorithm.
However, the probability of events of this type is asymptotically
negligible for the first $o(\sqrt{\sigma})$ steps of the algorithm.

The acyclicity of trees is what allows us to model them by a point
process without reference to the order of exploration of vertices. In
general---for invasion percolation on $\Z^d$, for example---it may
still be possible
to use some\vadjust{\goodbreak} of the following methodology while jointly constructing the
point process $P$ and the exploration process ``as we go.'' However, we
have not pursued this avenue of study.

For the remainder of the section, we explore what properties we can
derive about the point process invasion percolation procedure with as
few restrictions on the point set $P$ as possible.
The next definitions and lemma provide an alternative geometric
characterization of the connection rule used in the above inductive
procedure, one that will be useful throughout the paper.
\begin{dfn}\label{dfnboxes}
Given an interval $\ssi$, with $n_{\ssi} > -\infty$, and an $\ssi
$-reasonable point set $P$, for each $i \in\ssi$ with $i > n_{\ssi
}$, let
\[
\HEIGHT_i(P,\ssi) = \inf\{h \dvtx\exists j\in\ssi, n_{\ssi}
\leq j <
i \mbox{ such that } |[j,i)\times[0,h]|_P \geq i-j\}.
\]
Let $\LEFT_i(P,\ssi)$ be the minimum integer $\LEFT_i \in[n_\ssi,i)$
such that
$|[\LEFT_i,i)\times[0,\HEIGHT_{P,\ssi}(i)]|_P = i-\LEFT_I$,
let $\BOX_i(P,\ssi)= [\LEFT_i,i)\times[0,\HEIGHT_i]$
and let $t_i(P,\ssi)$ be the unique point in $\BOX_i$ with
$y(t_i)=\HEIGHT_i$.
\end{dfn}

We often omit reference to the parameters $P$ and $\ssi$ if the context
is clear.

We take a moment to observe that these functions are well defined. It
follows from condition \hyperlink{IPWIT2}{2} that $h_{i}$ is finite,
and from
condition \hyperlink{IPWIT1}{1} that it is positive. The minimality of $h_{i}$
then implies the existence of a point $p \in P(\BOX_i)$ such that $y(p)
= \HEIGHT_i$. The fact there is a unique such point follows from
condition \hyperlink{IPWIT3}{3}.
\begin{lem}\label{boxesproc}
If $n_{\ssi} > -\infty$ and $P$ is $\ssi$-reasonable, then for all $n
\in\ssi\setminus\{n_{\ssi}\}$, we have $t_n=p_n$.
\end{lem}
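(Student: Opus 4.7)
The plan is to show that $y(p_n) = \HEIGHT_n$; by condition~\ref{IPWIT3} this identifies $p_n$ with the unique point of $P$ at height $\HEIGHT_n$, which is $t_n$. Remarkably, no induction on $n$ is required: both inequalities $\HEIGHT_n \geq y(p_n)$ and $\HEIGHT_n \leq y(p_n)$ follow purely from the definition of $\HEIGHT_n$, the greedy selection rule of invasion percolation, and the column constraint $x(p_i) \in [n_\ssi, i)$.

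For $\HEIGHT_n \geq y(p_n)$, I argue by contradiction. Suppose some $h < y(p_n)$ and $j \in [n_\ssi, n)$ witness $|[j, n) \times [0, h]|_P \geq n - j$. Every counted point $q$ has $y(q) \leq h < y(p_n)$ and $x(q) \in [j, n) \subseteq [n_\ssi, n)$, so it cannot be unchosen---otherwise it would undercut $p_n$ at iteration $n-1$---and hence lies in $T_{n-1} := \{p_{n_\ssi+1}, \ldots, p_{n-1}\}$. But any $p_i \in T_{n-1}$ with $x(p_i) \geq j$ must satisfy $i > j$, since $x(p_i) \in [n_\ssi, i)$; thus $|T_{n-1} \cap [j, n) \times [0, h]| \leq n - 1 - j$, contradicting the hypothesised lower bound $n - j$.

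For $\HEIGHT_n \leq y(p_n)$, I exhibit a witness $j^*$. Set
\[
j^* := \max\{j \in \{n_\ssi+1, \ldots, n-1\} : y(p_j) > y(p_n)\},
\]
or $j^* := n_\ssi$ if the set is empty. A short column-progression argument shows $x(p_i) \geq j^*$ for every $i \in \{j^*+1, \ldots, n\}$: if $j^* = n_\ssi$ this is automatic, and if $j^* > n_\ssi$, a violation $x(p_i) < j^*$ would make $p_i$ an available candidate in $[n_\ssi, j^*)$ at iteration $j^* - 1$ with $y(p_i) \leq y(p_n) < y(p_{j^*})$ (the first inequality from the maximality of $j^*$), contradicting the algorithm's pick of $p_{j^*}$. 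Consequently $p_{j^*+1}, \ldots, p_n$ are $n - j^*$ distinct points lying in $[j^*, n) \times [0, y(p_n)]$, giving $|[j^*, n) \times [0, y(p_n)]|_P \geq n - j^*$ and hence $\HEIGHT_n \leq y(p_n)$ via $j = j^*$. The main obstacle is Direction~2, specifically the correct choice of $j^*$ and verifying the column-progression claim; once in place, the two inequalities combine with condition~\ref{IPWIT3} to conclude $p_n = t_n$.
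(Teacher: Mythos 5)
Your proof is correct, and the second half takes a genuinely different route from the paper's. For the inequality $\HEIGHT_n \geq y(p_n)$, both you and the paper make essentially the same counting observation (chosen points $p_i$ with $x(p_i)\geq j$ force $i>j$, so at most $n-1-j$ of them can sit in the strip $[j,n)$), though you phrase it by contradiction and the paper phrases it by exhibiting an available candidate in $\BOX_n$; this direction needs no induction in either treatment. The real divergence is in $\HEIGHT_n \leq y(p_n)$: the paper runs an induction on $n$, chaining together the boxes $\BOX_{a_0}\supset \BOX_{a_1}\supset\cdots$ with $a_0=n-1$, $a_{i+1}=\LEFT_{a_i}$, invoking the inductive hypothesis $p_{a_i}=t_{a_i}$ and box disjointness to tally points to the left of $p_n$; you instead pick the single index $j^\ast$ equal to the last step before $n$ whose weight exceeds $y(p_n)$, show by the greedy rule that no step in $\{j^\ast+1,\dots,n\}$ can reach left of $j^\ast$, and read off the witness $|[j^\ast,n)\times[0,y(p_n)]|_P\geq n-j^\ast$ directly. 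Your version is non-inductive and argues purely from the algorithm's minimizing rule and the column constraint $x(p_i)<i$, which makes it shorter and arguably more transparent; the paper's version has the advantage of exercising the box machinery ($\LEFT$, disjointness of nested boxes) that is developed and used heavily in the rest of the section. One small point worth keeping explicit when you write this up: to launch the contradiction in Direction~1 you should note that the infimum defining $\HEIGHT_n$ is attained (so there really is a witnessing pair $(j,h)$ with $h=\HEIGHT_n < y(p_n)$); the paper records this just before the lemma, so it is available, but it is the one place your argument silently leans on a prior observation.
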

\begin{pf}
It suffices to show (by condition \hyperlink{IPWIT3}{3}) that $y(t_n) = y(p_n)$.
We prove this by induction on $n$. Clearly, the assertion holds for $n
= n_\ssi+ 1$. Assume $n > n_\ssi+ 1$ and that $t_i = p_i$ for all
$n_\ssi+ 1 \le i < n$. First, since $|\BOX_n|_P = n-\LEFT_n$ and
$\bigcup_{i = n_\ssi+ 1}^{n-1}p_i$ contains at most $n - \LEFT_n - 1$
points of $P \cap B_n$, the set $(P\setminus P_{n-1})\cap\BOX_n$
contains at least one point
and so $y(p_n) \le y(t_n)$.

To show that $y(t_n) \le y(p_n)$,
first note that if $x(p_n)\geq n-1$, then $|[n-1,\break n)\times[0,y(p_n)]|_P
\geq1$ and so certainly $y(t_n) \leq y(p_n)$.
We thus assume that $x(p_n)<n-1$ and construct a sequence $\{a_i\}_{i =
0}^k$ inductively as follows:

\begin{algorithm}
\begin{algorithmic}[1]
\item[] Let $i = 0$ and let $a_0 = n - 1$.
\item[]\textbf{1.} If $a_i \le x(p_n)$, set $k = i$ and \textbf
{stop}.
\item[]\textbf{2.} Otherwise, let $a_{i+1}=\LEFT_{a_i}$, then let $i = i
+ 1$ and return to \textbf{1}.
\end{algorithmic}
\end{algorithm}

For each $0 \leq i \leq k$ for which $a_i$ is defined, if $a_i >
x(p_n)$, then $y(p_{a_i}) < y(p_n)$ or else the point $p_n$ was a
better choice for $p_{a_i}$. By the inductive hypothesis, $p_{a_i} =
t_{a_i}$. By construction,\vspace*{-1pt} $|\BOX_{a_i}|_P = a_i - \LEFT_{a_i} =
a_i-a_{i+1}$ for all $i < k$.
Since $\BOX_{a_i} \cap\BOX_{a_j} = \varnothing$ for all\vadjust{\goodbreak} $i \ne j$, we
conclude that $\bigcup_{i=0}^{k-1} \BOX_{a_i}$ has $n - 1 - a_k$ points of~$P$.
Thus, $|[a_k,n)\times[0,y(p_n)]|_P \ge|\{p_n\}\cup\bigcup
_{i=0}^{k-1} B_{a_i}|_P \ge n - a_k$. By the choice of $\HEIGHT_{n}$
minimum, it follows that $\HEIGHT_{n}=y(t_n) \le y(p_n)$ as required.
\end{pf}

The structure of the containment relations among the boxes $B_i$ turns
out to be interesting in its own right, and we explore aspects of it
here as well as later in the paper.

\begin{lem}\label{lemleftboxtaller}
If $n_{\ssi} > -\infty$ and $P$ is $\ssi$-reasonable, then for $n
\in
\ssi\setminus\{n_{\ssi}\}$, either $\HEIGHT_{\LEFT_n} >\HEIGHT_n$ or
$\LEFT_n = n_{\ssi}$.
\end{lem}
\begin{pf}
Assume $\LEFT_n \neq n_{\ssi}$,
suppose $\HEIGHT_{\LEFT_n} \le\HEIGHT_{n}$ and write $m=\LEFT_n$.
Then both~$\BOX_{m}$ and $\BOX_{n}$ are contained in $[\LEFT
_m,n)\times
[0,\HEIGHT_n]$,
so
$|[\LEFT_{m},n)\times[0,\HEIGHT_n]|_P \ge n - \LEFT_m$.
This contradicts either the choice of $\HEIGHT_n$ or the choice of
$\LEFT_n$.
\end{pf}

\begin{lem}\label{lemboxcontainment}
If $n_{\ssi} > -\infty$ and $P$ is $\ssi$-reasonable, then for any $i,j
\in\ssi\setminus\{n_{\ssi}\}$ with $i < j$,
either $\BOX_i \cap\BOX_j = \varnothing$ or $\BOX_i \subseteq\BOX_j$.
\end{lem}
\begin{pf}
Suppose that $B_{i}\cap B_{j}\neq\phi$. In particular this implies
$\LEFT_{j}<i$. We prove that $B_{i}\subseteq B_{j}$ by proving that
$h_{i}<h_{j}$ and $\LEFT_{j}\le\LEFT_{i}$.

The minimality of $h_{j}$ implies that $|[\LEFT_{j},j)\times
[0,h_{j})|_{P}=j-\LEFT_{j}-1$ and that $|[i,j)\times[0,h_{j})|_{P}\le
j-i-1$. Thus $|[\LEFT_{j},i)\times[0,h_{j})|\ge i-\LEFT_{j}$, which
immediately implies that $h_{i}<h_{j}$.

We now prove $\LEFT_{j}\le\LEFT_{i}$. Suppose that $\LEFT_{i}<\LEFT
_{j}$. Then, by reasoning as above, and using the fact that
$h_{j}>h_{i}$ we have that $|[\LEFT_{i},\LEFT_{j})\times
[0,h_{j})|_{P}\ge|[\LEFT_{i},\LEFT_{j})\times[0,h_{i})|_{P} \ge
\LEFT
_{j}-\LEFT_{i}$. This implies that $|[\LEFT_{i},j)\times
[0,h_{j})|_{P}\ge j-\LEFT_{i}$, which contradicts the definition of
$\LEFT_{j}$.
\end{pf}

\begin{lem}\label{lemboxpath}
If $n_{\ssi} > -\infty$ and $P$ is $\ssi$-reasonable, then for any $i,j
\in\ssi\setminus\{n_{\mathcal I}\}$
such that $\LEFT_j < i \le j$, there is a path in $\IPC(P,\ssi)$
between $i$ and $\LEFT_j$.
\end{lem}

\begin{pf}
Observe that since $\LEFT_j < i$, $B_i \subset B_j$ by Lemma \ref
{lemboxcontainment}.
We apply induction on $i-\LEFT_j$. If $i-\LEFT_j=1$, then we must have
$\lfloor x(p_i)\rfloor= \LEFT_j$,
so $\EDGE_i = (i,\LEFT_j)$, verifying the claim.

For larger values of $i-\LEFT_j$, first note that since $B_i \subset
B_j$, we must have
$\LEFT_j \leq\LEFT_i\leq\lfloor x(p_i)\rfloor< i$. If $\LEFT_j
=\lfloor x(p_i)\rfloor$, then $\EDGE_i$ is a path from
$i$ to $\LEFT_j$. Otherwise, $\lfloor x(p_i)\rfloor-\LEFT_j < i -
\LEFT
_j$, so by induction there is a path from
$\lfloor x(p_i)\rfloor$ to $\LEFT_j$, which together with edge $\EDGE
_i$ yields a path from $i$ to $\LEFT_j$.
\end{pf}

\subsection{Point process invasion percolation in the upper half-plane}
For suitable point sets $P$, we may hope to define a version of the
invasion percolation procedure
in which $\ssi=\Z$ (or more generally when $n_{\ssi}=-\infty$).
This is
indeed possible, and the
resulting infinite graph can be said to capture the behavior of invasion
percolation ``very far from the root.'' A direct inductive description of
the graph seems difficult, and so we define the object $\IPC(P,\Z)$
as the limit of $\IPC(P,\Z\cap[m,\infty))$ as\vadjust{\goodbreak} $m \to-\infty$.
Later, we shall also see how the alternative characterization
of the connection rule given by Definition~\ref{dfnboxes} and Lemma
\ref{boxesproc}
can be used to define this extension of the invasion percolation procedure.

As before, we desire as few restrictions on $P$ as possible. In this
section, we suppose we are given a set of points $P \subset\R^2$ and
an interval $\ssi$ with $n_{\ssi} \geq-\infty$ and $m_{\ssi} \leq
\infty$.
We say that $P$ is \textit{seemly} (or \textit{$\ssi$-seemly}, if $\ssi
$ is
not clear from context) if $P$ satisfies conditions \hyperlink{IPWIT1}{1}--\hyperlink{IPWIT3}{3}
and additionally either (a) $n_{\ssi
}>-\infty$,
or (b)~$n_{\ssi}=-\infty$ and $P$ satisfies conditions \hyperlink{SIPWIT1}{4}
and \hyperlink{SIPWIT2}{5}, below.
\begin{longlist}[4.]
\item[4.]\hypertarget{SIPWIT1} For any $n \in\ssi$, there are
infinitely many $m
\in\ssi\cap(-\infty,n)$ such that $|[m,n)\times[0,1]|_P > n - m$.
\item[5.]\hypertarget{SIPWIT2} If $\lambda< 1$, then for any $n \in\ssi
$ there
are at most finitely many $m \in\ssi\cap(-\infty,n)$ such that
$|[m,n)\times[0,\lambda]|_P \ge n - m$.
\end{longlist}
The reader can verify that the following two examples almost surely
produce seemly point sets.

\subsection*{Stationary limit of IPC on $\st_\sigma$}
Let $P$ be defined by choosing $\sigma$ independent, uniformly random
points in the set $[n,n+1)\times[0,\sigma)$ for each $n \in\ssi= \Z$.

\subsection*{Stationary limit of the Poisson IPC}
Let $P$ be a Poisson point process of intensity~$1$ in the upper half
plane, and let $\ssi= \Z$.

The following lemma essentially states that for $\ssi$-seemly point
sets with $n_{\ssi}=-\infty$, all edges have weight less than $1$.
\begin{lem}\label{lemweightone}
If $n_{\mathcal I}=-\infty$ and $P$ is $\ssi$-seemly, then for any
$n\in\ssi$ there exists $m_0 \in\ssi$ such that $\HEIGHT_n(P,\ssi
\cap
[m,\infty)) < 1$ for all integers $m \leq m_0$.
\end{lem}
\begin{pf}
By condition \hyperlink{SIPWIT1}{4}, $|[m,n)\times[0,1]|_P > n - m$ for
infinitely many integers $m < n$; therefore, $\HEIGHT_n(P,\ssi\cap
[m,\infty)) < 1$ for infinitely many integers $m < n$.
But $\HEIGHT_n(P,\ssi\cap[m,\infty)) = y(p_n(P,\ssi\cap[m,\infty)))$,
and $y(p_n(P,\ssi\cap[m,\infty)))$ is nonincreasing as $m$ decreases,
so by condition \hyperlink{IPWIT3}{3} $y(p_n(P,\ssi\cap[m,\infty))) <
1$ for all~$m$ small enough.
\end{pf}

We next consider the family of intervals $\ssi\cap[m,\infty)$ for $m
\in\Z$, and show that as $m \to-\infty$, each vertex only changes its
parent a finite number of times.
This allows us to consistently define the limiting object $\IPC(P,\ssi)$.
\begin{lem}\label{lemconsistent}
If $P$ is $\ssi$-seemly, then for any $n \in\ssi$, there exists $m_0 >
-\infty$ such that
$p_n(P,\ssi\cap[m,\infty)) =p_n(P,\ssi\cap[m_0,\infty))$ for all
$m \in
\ssi\cap(-\infty, m_0]$.
\end{lem}
\begin{pf}
The lemma is obvious if $n_{\ssi}>-\infty$ so assume $n_{\ssi
}=-\infty$.
Fix $n \in\ssi$ and suppose the assertion of the lemma fails for this $n$.
Then there exists a strictly decreasing integer sequence $\{m_i\}
_{i=0}^{\infty}$
and a sequence $\{q_i\}_{i=0}^{\infty}$ of distinct points in~$P$ such
that $p_n(P,\ssi\cap[m_i,\infty))=q_i$
for all $i \in\N$, whose $y$-coordinates decrease\vadjust{\goodbreak} strictly as $i$
increases. By Lemma~\ref{lemweightone},
there exists some $i_0$ such that $y(q_i) < 1$ for all $i \geq i_0$.
But then for all $i \geq i_0$,
$\BOX_n(P,\ssi\cap[m_i,\infty)) \subset[\LEFT_n(P,\ssi\cap
[m_i,\infty
)),n)\times[0,y(q_i)]$,
and so for such $i$,
\begin{eqnarray*}
\bigl|\bigl[\LEFT_n\bigl(P,\ssi\cap[m_i,\infty)\bigr),n\bigr)\times
[0,y(q_i)]\bigr|_P &\geq&\bigl|\BOX
_n\bigl(P,\ssi\cap[m_i,\infty)\bigr)\bigr|_P \\
&\geq& n-\LEFT_n\bigl(P,Z\cap[m_i,\infty)\bigr).
\end{eqnarray*}
This is a contradiction to condition \hyperlink{SIPWIT2}{5}.
\end{pf}

For a seemly point set $P$, we now define $\IPC(P,\ssi)$ to be the
graph with vertex set $\ssi$ and such that for each $n \in\ssi$, $e_n
= e_n(P,\ssi) = \lim_{m \to-\infty} e_n(P,\ssi\cap[m,\infty))$. This
limit is well defined by the preceding lemma.
We likewise define $p_n(P,\ssi)$, $\LEFT_n(P,\ssi)$, $\HEIGHT
_n(P,\ssi
)$ and $\BOX_n(P,\ssi)$.
By a limiting argument
Lemmas~\ref{lemleftboxtaller},~\ref{lemboxcontainment} and \ref
{lemboxpath} are also valid with respect to $\IPC(P,\ssi)$ when
$n_{\ssi}=-\infty$. We therefore obtain the following theorem.
\begin{thmm}\label{thmtree}
If $P$ is $\ssi$-seemly, then $\IPC(P,\ssi)$ is a tree.
\end{thmm}
\begin{pf}
Since it is clearly acyclic, we just need to show that $\IPC(P,\ssi)$
is connected. Suppose $i,j \in\ssi$, $i < j$. Let $\LEFT_j^0=\LEFT_j$
and for $t \geq1$, $t \in\N$, let $\LEFT^t_j = \LEFT_{\LEFT^{t-1}_j}$.
Then there must exist $t \in\N$ such that $i \in\BOX_{\LEFT^t_j}$.
By Lemma~\ref{lemboxpath},\vspace*{-3pt} there is a path between $j$ and $\LEFT
^{t+1}_j$, and there is a path between $i$ and $\LEFT^{t+1}_j$. As $i$
and $j$ were arbitrary, this completes the proof.
\end{pf}

An advantage of the current formulation of invasion percolation is that
we can equivalently define
the limit process via conditions on the numbers of points in boxes
$[m,n)\times[0,h]$.
More precisely, the following lemma is easily verified.

\begin{lem}\label{lemequivalent}
Suppose $P$ is $\ssi$-reasonable.
Fix $k,n$ with $n_{\ssi} < k < m_{\ssi}+1$ and $0 < n < (k-n_{\ssi
})+1$, and $y > 0$.
In order that $\LEFT_k=k-n$ and that $\HEIGHT_0=y$, it is necessary and
sufficient that the following three conditions hold:
\begin{itemize}
\item$|[k-n,k)\times[0,y]|_{\pff}=n$ and $|[k-n,k)\times
[0,y)|_{\pff
}=n-1$ [call this condition $E=E(k-n,k,y,P)$].
\item For all $0 < m \leq n$, $|[k-m,k)\times[0,y)|_{\pff}<m$ [call this
condition $F=F(k-n,k,y,P)$].
\item For all $m \in\N$, $|[k-n-m,k-n)\times[0,y]|_{\pff}< m$ [call
this condition $G=G(k-n,y,P)$].
\end{itemize}
In this case, $\BOX_k=[k-n,k]\times[0,y]$, $p_n$ is the unique point $p
\in P$ with $y(p)=y$, and $\EDGE_n=(n,\lfloor x(p_n) \rfloor)$.
\end{lem}

We will sometimes have use for the condition $G(k-n,y^-)$, which is the
same as the condition $G$ above but with $[0,y]$ replaced by $[0,y)$.
The next lemma provides a condition under which we can determine the
behavior to the right of a given integer $n$ without
further reference to the behavior of $P$ to the left of $n$. Its proof
is obvious and is omitted.\vadjust{\goodbreak}
\begin{lem}\label{lemrestrict}
Suppose $P$ is $\ssi$-reasonable.
Fix $n_{\ssi} < n < m_{\ssi}+1$ and $y > 0$,
let $Q=\{p \in P \dvtx x(p) \geq n, y(p) \leq y\}$ and let $\ssj=\ssi
\cap
\{n,\ldots,\infty\}$.
If $G(n,y^-)$ holds and $Q$ is $\ssj$-reasonable, then
for all $m$ with $n < m < m_{\ssi}+1$, $\LEFT_m(P,\ssi)=\LEFT
_m(Q,\ssj
)$, $\HEIGHT_m(P,\ssi)=\HEIGHT_m(Q,\ssj)$
and $p_n(P,\ssi)=p_n(Q,\ssj)$.
\end{lem}

We will also have use of the following sufficient condition for $Q$ to
be reasonable. (Again, the proof is straightforward and is omitted.)
\begin{lem}\label{reasonablecond}
Let $\ssi,P,k,n$ and $y$ be as in Lemma~\ref{lemequivalent},
let $\ssj=\{k-n,\ldots,k\}$ and let $Q=P \cap([k-n,k]\times[0,y])$.
If $E,F$ and $G$ all hold, then $Q=P \cap\BOX_k$ and $Q$ is $\ssj
$-reasonable.
\end{lem}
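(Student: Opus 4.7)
The plan is to verify the two conclusions in order, using Lemma \ref{lem:equivalent} as the launch point and then checking each of the three conditions in the definition of \emph{reasonable} for the pair $(Q,\ssj)$.

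For the first conclusion, since $E$, $F$, and $G$ all hold, Lemma \ref{lem:equivalent} applies and gives $\BOX_k=[k-n,k]\times[0,y]$. Hence the definition $Q=P\cap([k-n,k]\times[0,y])$ directly yields $Q=P\cap \BOX_k$. So the real work is verifying that $Q$, viewed as a pointset on the index interval $\ssj=\{k-n,\ldots,k\}$ (which has $n_{\ssj}=k-n>-\infty$), is $\ssj$-reasonable.

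Conditions \ref{IPWIT1} and \ref{IPWIT3} come essentially for free: $Q\subseteq P$, so every point of $Q$ lies in the upper half-plane and every bounded set contains only finitely many points of $Q$; and for any $y'\in\R$ the set $Q\cap(\R\times\{y'\})$ is contained in $P\cap(\R\times\{y'\})$, which has at most one element. I would dispatch these in one sentence each.

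The only substantive step is condition \ref{IPWIT2}. Let $n'\in\ssj$ with $n'>k-n$ (the case $n'=n_{\ssj}=k-n$ is vacuous). Take $k'=n'-(k-n)>0$, so that $n'-k'=k-n=n_{\ssj}$, meeting the left-endpoint requirement. Since $Q$ lives inside $[k-n,k]\times[0,y]$, we have
\[
|[n'-k',n')\times[0,\infty)|_Q = |[k-n,n')\times[0,y]|_P.
\]
By condition $E$, $|[k-n,k)\times[0,y]|_P=n$. By condition $F$ (applied with $m=k-n'$, valid because $0<k-n'\le n$; the case $n'=k$ is trivial) together with \ref{IPWIT3}, $|[n',k)\times[0,y]|_P\le (k-n'-1)+1=k-n'$. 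Subtracting gives $|[k-n,n')\times[0,y]|_P\ge n-(k-n')=k'$, which is exactly condition \ref{IPWIT2} for $(Q,\ssj)$ at $n'$.

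The whole argument is routine bookkeeping; the only place to be careful is the half-open/closed interval convention when using $F$ to bound $|[n',k)\times[0,y]|_P$ rather than $|[n',k)\times[0,y)|_P$, which is why the single point potentially lying on the line $y(p)=y$ must be absorbed via condition \ref{IPWIT3}. That is the only subtlety; everything else is immediate, which is why the authors elide the proof.
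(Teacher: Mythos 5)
Your proof is correct. The paper actually omits a proof for this lemma (it states only that "the proof is straightforward and is omitted"), so there is no paper argument to compare against, but your direct verification is precisely what was intended: conditions \ref{IPWIT1} and \ref{IPWIT3} are inherited from $P$ by restriction, and you correctly isolate condition \ref{IPWIT2} as the one needing work, choosing $k'=n'-(k-n)$ so the relevant window is all of $[k-n,n')$ and then combining $E$ (total count $n$ in $[k-n,k)\times[0,y]$) with $F$ plus \ref{IPWIT3} (at most $k-n'$ points in $[n',k)\times[0,y]$, accounting for the one possible point on the top edge) to get the required lower bound. The careful handling of the closed-versus-half-open height interval via \ref{IPWIT3} is indeed the only real subtlety and you address it correctly, including the boundary case $n'=k$.
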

%
\subsection{Box graphs}\label{boxgraphs}

As we saw above, the boxes $B_n$ play a useful role in our study of
invasion percolation.
The boxes can also be seen to capture information about the structure
of the point process invasion
percolation procedure itself. For example, it is easily checked that if
the procedure explores some edge $e$ lying within a box $B_n$, then it
will explore all other edges lying within $B_n$ before exploring any
edges with an endpoint outside of $B_n$. (Of course, the procedural
interpretation does not exist when $\ssi=\Z$, but
in this case we can still think of the boxes as capturing information
about the process behavior ``far from the root.'')

In this section, we introduce a graph which characterizes the
containment relation among the boxes. Given an $\ssi$-reasonable point
set $P$, we define $\BOXES(P,\ssi)$ to be the graph with vertex set
$\ssi\setminus\{n_\ssi,+\infty\}$ and such that, for $i < j$, $i$ and
$j$ are joined by an edge if and only if $\BOX_i(P,\ssi) \subseteq
\BOX
_j(P,\ssi)$ and $\BOX_i(P,\ssi) \not\subseteq\BOX_{j'}(P,\ssi)$ for
any $i < j' < j$.
Also, for $i \in\ssi\setminus\{n_{\ssi},m_{\ssi}\}$, we write
$a_i(P,\ssi)$ for the parent of~$i$ in $\BOXES(P,\ssi)$.

%
%
\begin{figure}[b]

\includegraphics{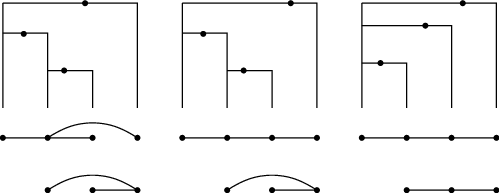}

\caption{The leftmost and middle sets of points have the same $\BOXES$
graphs but different $\IPC$ graphs. The middle and rightmost sets of
points have the same $\IPC$ graph but different $\BOXES$ graphs.}
\label{example2}
\end{figure}

The examples shown in Figure~\ref{example2} demonstrate that between
the graphs $\IPC(P,\ssi)$ and $\BOXES(P,\ssi)$, neither is determined
by the other. [Theorem~\ref{main1}(c) is essentially a consequence of
this fact.]

Clearly, $\BOXES(P,\ssi)$ is acyclic for any $\ssi$. We shall show that
$\BOXES(P,\Z)$ is a tree (i.e., connected) under the additional
assumption of the ``rightward version'' of condition \hyperlink{SIPWIT2}{5}.

\begin{longlist}[6.]
\item[6.]\hypertarget{SIPWIT4} If $\lambda< 1$, then for any $m \in\ssi
$, there
are at most finitely many $n \in\Z\cap(m,\infty)$ such that
$|[m,n)\times[0,\lambda]|_P \ge n - m$.
\end{longlist}

If $P$ satisfies conditions \hyperlink{IPWIT1}{1}--\hyperlink
{SIPWIT4}{6} with
$\ssi= \Z$, we say that $P$ is \textit{exemplary}. Both examples of the
last subsection are almost surely exemplary point sets.

\begin{lem}\label{lemboxgraph2}
Suppose $m_{\ssi} = \infty$ and $P$ is an $\ssi$-reasonable point set
that satisfies condition \hyperlink{SIPWIT4}{6}. Choose any $m \in\ssi
\setminus
\{n_{\ssi},+\infty\}$ for which $\HEIGHT_m < 1$ and for which there is
no $m' \in\Z\cap(m,\infty)$ such that $\BOX_m \subseteq\BOX_m'$ and
$\HEIGHT_m' \ge1$. Then there are infinitely many $n \in\Z\cap
(m,\infty)$ such that $\BOX_m \subseteq\BOX_n$.
\end{lem}
\begin{pf}
Suppose $m$ is as in the statement of the lemma but that there are only
finitely many
$n > m$ such that $\BOX_m \subseteq\BOX_n$. Then by replacing $\BOX_m$
with the tallest box that
contains it, we may assume that in fact there is no $n > m$ such that
$\BOX_m \subseteq\BOX_n$.
By condition \hyperlink{SIPWIT4}{6}, we may choose $n> m$ for which
$|[m,n)\times
[0,\HEIGHT_m]|_P < n-m$.
Thus, there must be $i \in\{m+1,\ldots,n\}$ for which $\HEIGHT_i >
\HEIGHT_m$, so take $i$ minimum such that
this holds. By Lemma~\ref{lemleftboxtaller}, we must then have $\LEFT
_i < m$, and so by Lemma~\ref{lemboxcontainment} we must have $\BOX_m
\subset\BOX_i$, a~contradiction.
\end{pf}

Before showing that $\BOXES(P,\Z)$ is a tree, let us first use the
lemma to confirm the basic property of exemplary point sets that every
point of $P$ under the line $y = 1$ lies along the top of some box
$\BOX_n$.

\begin{prop}\label{propunder1}
If $P$ is exemplary, then for all $p \in P\cap[n_\ssi,+\infty)\times
[0,1)$, we have $p = p_n$ for some $n \in\ssi$.
\end{prop}

\begin{pf}
Let $p \in P$ have $y(p) < 1$. We first note that if $p \in\BOX_m$ for
some~$m$, then $p = p_n$ for some $\lceil x(p) \rceil\leq n \leq m$.
Also, there must be some integer $k \leq x(p)$ for which $\HEIGHT_k >
y(p)$, or else
$|[\lceil x(p)\rceil-i,\lceil x(p)\rceil]\times[0,y(p)]|_P \geq i$ for
infinitely many integers $i > 0$, which contradicts condition \hyperlink
{SIPWIT2}{5}.

By Lemma~\ref{lemweightone}, $\HEIGHT_m < 1$ for all $m \in\Z$, so by
Lemma~\ref{lemboxgraph2}, there are infinitely many
$m \in\Z$ for which $\BOX_k \subseteq\BOX_m$. One of these boxes
contains $p$, so $p=p_n$ for some $n$, as claimed.
\end{pf}

\begin{thmm}\label{thmbgconnect}
If $P$ is exemplary, then $\BOXES(P,\Z)$ is a tree.
\end{thmm}
\begin{pf}
It suffices to show that $\BOXES(P,\Z)$ is connected.
Recall that \mbox{$\HEIGHT_n < 1$} for any $n \in\Z$, by Lemma \ref
{lemweightone}. Fix $i < j$, $i,j \in\Z$.
By Lemma~\ref{lemboxgraph2} there are infinitely many $m$ such that
$\BOX_i \subset\BOX_m$.
Take the least such $m$ for which $m \geq i$---then also $\BOX_j
\subseteq\BOX_m$, and so by Lemma~\ref{lemboxpath}
there exist paths from $i$ to~$m$ and from $j$ to $m$. The theorem follows.
\end{pf}

\subsection{Random walks and the forward maximal process}\label{secrw}

Let $P$ be a point set satisfying condition \hyperlink{IPWIT1}{1}.
Given $h > 0$ and $k \in\ssi$, we define random walks
$S^{k,h}=S^{k,h}(P)$ and $L^{k,h}=L^{k,h}(P)$ as follows. We set
$S^{k,h}_0=L^{k,h}_0 = 0$ and,
for $i \geq1$, set $S^{k,h}_i = | [k,k+i]\times[0,h] |_{P}-i$, and set
$L^{k,h}_i = | [k,k-i]\times[0,h] |_{P}-i$.
We also define random walks $S^{k,h^-}$ and $L^{k,h^-}$, by replacing
$[0,h]$ by $[0,h)$ in the above definitions. In other words,
the random walks $S^{k,h^-}$ and $L^{k,h^-}$ ignore points on the line
$y=h$. (For fixed $h$, for any of the random point sets~$P$ we will
consider, it will be the case that with probability $1$, $S^{k,h}_i=S^{k,h^-}_i$
for all $i$, but we will at times work in conditional settings in which
these two random walks are not identical.)

We say that $S^{k,h}$ \textit{survives} if for all $i \geq0$, $S^{k,h}_i
\geq0$, and otherwise say that~$S^{k,h}$ \textit{dies}. Also, we say that
$S^{k,h}$ \textit{has a
chance} if $S^{k,h}_i \geq0$ for some $i > 0$, and otherwise that $S^{k,h}$ \textit{
has no chance}. We extend these definitions to $L^{k,h}$ by symmetry.

We now establish two more basic properties of $\IPC(\mathcal{P},\ssi)$,
under the following additional assumptions.

\begin{longlist}[7.]
\item[7.]\hypertarget{SIPWIT3} If $\lambda> 1$, then for any $m \in\Z$,
$S_n^{m,\lambda} \leq n$ for at most finitely many $n \in\N$.
%
\item[8.]\hypertarget{RWdies} $S^{k,1}$ dies for all $k \in\ssi$.
\end{longlist}
Roughly speaking, condition \hyperlink{SIPWIT3}{7} is a ``rightward
version'' of
condition \hyperlink{SIPWIT1}{4}. If~$P$ is an $\ssi$-reasonable point
set that
satisfies conditions \hyperlink{SIPWIT4}{6}, \hyperlink{SIPWIT3}{7} and
\hyperlink{RWdies}{8}, we
say that~$P$ is \textit{distinguished} (or \textit{$\ssi
$-distinguished}, if
$\ssi$ is not
clear from context). The first two examples given in the introduction
to this section are almost surely distinguished point sets.

We will see that for distinguished point sets $P$, when $n_{\mathcal I}
> -\infty$ and $m_{\mathcal I}=\infty$,
$\BOXES(P,\mathcal I)$ is \textit{not} connected---in this case we call
the connected components the \textit{ponds}
of $\BOXES(P,\mathcal I)$. We will see later that this agrees with the
normal use of this term in the invasion percolation literature.

\begin{lem}\label{lemboxgraph1}
If $n_{\ssi} > -\infty$, $m_{\ssi} = \infty$ and $P$ is $\ssi
$-distinguished,
then for any $m \in\ssi\setminus\{n_{\mathcal I},+\infty\}$, if
$\HEIGHT_m \ge1$,
then there are at most finitely many $n \in\Z$, $n > m$, such that
$\BOX
_m \subseteq\BOX_n$.
\end{lem}
\begin{pf}
Suppose otherwise. Without loss of generality, we may assume that
$\HEIGHT_m > 1$. Consider the integer sequence $\{n_i\}_{i=0}^\infty$,
which is defined as follows. Let $n_0 = m$. For $i \in\N$, let
$n_{i+1}$ be the smallest integer greater than $n_i$ such that $\BOX
_{n_i} \subseteq\BOX_{n_{i+1}}$.
Then for any $i \in\N$ and all $n_i < n < n_{i+1}$, we have
$\BOX_n \subseteq[n_i,n_{i+1})\times[0,\HEIGHT_{n_i}]$ for all $n_i <
n < n_{i+1}$ by Lemmas~\ref{lemleftboxtaller} and \ref
{lemboxcontainment}. Furthermore, it follows from the definition of
$\BOX_n$ and Lemma~\ref{lemboxcontainment} that
$|([n_i,n_{i+1})\times
[0,\HEIGHT_{n_{i+1}})) \setminus(\bigcup_{n_i < n < n_{i+1}} \BOX
_n)|_P =
0$ (or otherwise there would be a smaller choice for $\HEIGHT_{n_{i+1}}$).
Thus, $|[n_i,n_{i+1})\times[0,\HEIGHT_{n_i}]|_P = |\bigcup_{n_i < n <
n_{i+1}} \BOX_n|_P = n_{i+1} - n_i - 1$.
Since $\HEIGHT_{n_0} < \HEIGHT_{n_i}$ for all $i > 0$, it follows that
$|[n_0,n_{i+1})\times[0,\HEIGHT_{n_0}]|_P < n_{i+1} - n_0$ for all $i >
0$. Since $\HEIGHT_{n_0} > 1$, this is a contradiction to condition
\hyperlink{SIPWIT3}{7}.
\end{pf}

\begin{thmm}\label{thminfinitelymany}
If $n_{\mathcal I} > -\infty$, $m_{\mathcal I}=\infty$ and $P$ is
$\ssi
$-distinguished,
then $\BOXES(P,\ssi)$ contains
infinitely many components, all of which are finite. Furthermore, for
any given component, if $n$ is the rightmost integer belonging to the component,
then $\HEIGHT_n>1$ and the set of vertices of the component is $\{
\LEFT
_n+1,\ldots,n\}$.
\end{thmm}
\begin{pf}
Let $P$ satisfy the hypothesis of the theorem. We construct a~sequence
of integers $\{n_i\}_{i=0}^\infty$ as follows. Let $n_0 = n_{\mathcal
I}$. For $i \in\N$, let $n_{i+1}$ be the largest integer greater than
$n_i$ such that $\BOX_{n_{i+1}}$ contains the point $(n_i,0)$. We must
now show this sequence is well defined. Suppose not, and let $i$ be
minimum such that there is no valid choice for $n_{i+1}$. Then there
are infinitely many integers $n >n_i$ such that $B_{n_i} \subseteq
B_n$. Since $i$ was chosen minimum, for each such $n$ we have $\LEFT
_n=n_i$. By Lemma~\ref{lemboxgraph1}, it must be that for each such
$n$, $\HEIGHT_n < 1$. But this implies that $|[n_i,n)\times[0,1]|_P
\ge
n - n_i$ for all $n \in\Z\cap[n_i,\infty)$, a contradiction to
condition \hyperlink{RWdies}{8}. Thus the sequence $\{n_i\}
_{i=0}^{\infty}$ is
well defined.

For all $i\in\N$, it follows from the definition of $n_{i+1}$ that
$\LEFT_{n_{i+1}}=n_i$, and there is no integer $n>n_{i+1}$ for which
$\BOX_{n_{i+1}} \subseteq\BOX_n$; thus $\BOX_{n_{i+1}}$ and $\BOX
_{n_{i+2}}$ are in separate components of $\BOXES(P,\ssi)$. By
Lemma~\ref{lemboxcontainment}, all $\BOX_n$ such that $n_i < n \le
n_{i+1}$ are contained in $\BOX_{n_{i+1}}$ and hence in the same
component of $\BOXES(P,\ssi)$.

If $\HEIGHT_{n_{i+1}} < 1$ for some $i \in\N$, then by Lemma \ref
{lemboxgraph2}, there are infinitely many $n \in\Z\cap
(n_{i+1},\infty
)$ such that $\BOX_{n_{i+1}} \subseteq\BOX_n$, but this is a
contradiction to the choice of $n_{i+1}$.
By Lemma~\ref{lemleftboxtaller}, we have for all $i \in\N$ that
$\HEIGHT_{n_{i+1}} > \HEIGHT_{n_{i+2}}$. We conclude that $\HEIGHT
_{n_{i+1}} > 1$ for all $i \in\N$.
\end{pf}

Given an interval $\ssi$, with $n_{\mathcal I} > -\infty$ and
$m_{\mathcal I}=\infty$, and an $\ssi$-distinguished point set $P$,
define $\{n_i\}_{i=0}^\infty= \{n_i(P,\mathcal I)\}_{i=0}^\infty$ as
in the proof of Theorem~\ref{thminfinitelymany}.
\begin{cor}\label{corbackbone}
If $n_{\mathcal I} > -\infty$, $m_{\mathcal I}=\infty$ and $P$ is
$\ssi
$-distinguished, then $\IPC(P,\ssi)$ is a tree that consists of a
unique infinite backbone (i.e., a unique, infinite, self-avoiding path
originating from the root) from which emerge finite branches.
Furthermore, the backbone contains the points $\{n_i\}_{i=0}^{\infty}$.
\end{cor}
\begin{pf}
Clearly, $\IPC(P,\ssi)$ is acyclic, and is connected by Lemma \ref
{lemboxpath}, so is a tree.
For each integer $i \geq1$, let $\Phi_i$ be the unique path from $n_i$
to $n_0=n_{\ssi}$ in $\IPC(P,\ssi)$. By Lemma~\ref{lemboxpath},
it follows that for all $i \geq1$, $\Phi_i$ is a sub-path of~$\Phi
_{i+1}$, and so the limit $\Phi= \lim_{i \to\infty} \Phi_i$ is a
well-defined infinite path starting from $n_{\ssi}$. Furthermore, for
any integer $k$, any path $\Phi'$ starting from
$v_k$, that is, edge-disjoint from $\Phi$ must have all its elements
among $n_i,\ldots,n_{i+1}-1$, where $n_i \leq v_k < n_{i+1}$. Thus, all
branches leaving $\Phi$ are finite.
\end{pf}

\begin{remark*}
In general, relaxing any of the conditions in the definition of
distinguished point sets may cause the conclusions of Corollary \ref
{corbackbone} to fail. To provide just one example,
the following point set satisfies conditions
\hyperlink{IPWIT1}{1}--\hyperlink{SIPWIT3}{7},
but not the conclusion of Corollary~\ref{corbackbone}. The
point set $P$ contains no points except the following. Place $k$ points
inside $[0,1)\times[0,1/3)$. Place each of the points $(i+1,1-1/(i+2))$
for $i \in\N$. Then $\IPC(P,\N)$ has $k$ infinite backbones.
\end{remark*}

This completes our study of deterministic properties of the invasion
percolation procedure.
In the next section we begin our study of what happens when the
underlying point set is random.

\section{Invasion percolation on the PWIT}\label{secinvadepwit}
Throughout Section~\ref{secinvadepwit}, $\pff$ denotes a Poisson
process of constant intensity $1$ in $[0,\infty)\times[0,\infty)$, so
$\pff$ is almost surely \mbox{$\N$-distinguished}.
By Corollary~\ref{ipcpoisson}, $\IPC(\pff,\N)$ is distributed as the
invasion percolation cluster $\st_0$ of the PWIT, so results for $\IPC
(\pff,\N)$ apply to~$\st_0$ mutatis mutandis.
Below, we will derive more precise statements about the structure of
$\IPC(\pff,\N)$ than can be made under the assumptions of Section
\ref
{secpwitpp}. First, however,
we state two ``ballot-style'' theorems for stochastic processes that we
will use repeatedly.

\subsection{Two ballot-style theorems}
The following result was proved independently by Tanner \cite
{tanner61derivation} and Dwass~\cite{dwass62fluctuation}.
\begin{lem}[(Cycle lemma)]
Suppose that $X_1,\ldots,X_n$ are integer-valued, cyclically
interchangeable random variables with maximum value $1$.
Then for any integer $0 \leq k \leq n$,
\[
\mathbf{P}\{ S_i > 0\ \forall1 \leq i \leq n \mid S_n=k \} = \frac{k}{n}.
\]
\end{lem}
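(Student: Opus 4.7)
The plan is to reduce the probabilistic assertion to the classical Dvoretzky--Motzkin combinatorial cycle lemma. First, I would condition on the cyclic equivalence class of $(X_1,\ldots,X_n)$: by cyclic interchangeability, the conditional law given this class is uniform on the $n$ cyclic rotations of any representative. Since $S_n=X_1+\cdots+X_n$ is constant on a cyclic equivalence class, further conditioning on $S_n=k$ does not affect this uniformity. It therefore suffices to prove the following deterministic statement: for any integer sequence $(x_1,\ldots,x_n)$ with each $x_i\le 1$ and $\sum_i x_i=k$, exactly $k$ of its $n$ cyclic rotations have all partial sums strictly positive.

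The case $k=0$ is immediate, since then $S_n=0$ itself violates the required positivity and both sides equal $0$. For $k>0$, my approach is to extend $(x_i)$ periodically to $\mathbb{Z}$ by $x_{i+n}=x_i$, and to work with the infinite sequence of partial sums $S_j$, which satisfy $S_{j+n}=S_j+k$ for every $j\in\mathbb{Z}$. The cyclic rotation starting at position $j+1$ has all strictly positive partial sums if and only if $S_j<S_i$ for every $j<i\le j+n$; call such a $j\in\{0,1,\ldots,n-1\}$ \emph{admissible}. Exploiting the fact that each increment $x_i$ is at most $1$ (so $S_j$ cannot skip integer levels on the way up), I would build a bijection between admissible indices in $\{0,\ldots,n-1\}$ and the $k$ integer levels $\{\mu,\mu+1,\ldots,\mu+k-1\}$, where $\mu=\min_{0\le j<n}S_j$: each admissible $j$ is labelled by the level $S_j$, and conversely each such level has a unique admissible representative, namely the last visit to that level before the periodic extension carries the walk strictly above it.

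The main obstacle, modest but where care is needed, is to verify this bijection rigorously: one must check that admissible indices occupy pairwise distinct levels, that every level in $\{\mu,\ldots,\mu+k-1\}$ is attained by some admissible index, and that no admissible index can lie at level $\ge\mu+k$. The last point is precisely where the hypothesis $x_i\le 1$ is used, since it prevents the walk from jumping over a level that would otherwise obstruct admissibility. Once the count is established as $k$, averaging the conditional probability $k/n$ over cyclic equivalence classes restricted to $\{S_n=k\}$ yields the claimed identity.
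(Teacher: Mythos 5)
Your proof is correct. Note, however, that the paper does not actually prove this lemma: it states it and attributes it to Tanner (1961) and Dwass (1962), so there is no in-paper argument to compare against. Your approach — reduce by cyclic interchangeability to a uniform choice among the $n$ rotations of a fixed integer sequence, then count the admissible rotations via a bijection onto the $k$ integer levels $\{\mu,\ldots,\mu+k-1\}$ just at and above the minimal partial sum — is exactly the classical Dvoretzky--Motzkin cycle-lemma argument that those references refine.

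One small remark on where the hypothesis $x_i\le 1$ is genuinely needed. You write that it is what prevents an admissible start $j$ from lying at level $\ge\mu+k$. In fact that part of the argument follows from the periodicity alone: if $j_0\in\{0,\ldots,n-1\}$ is a minimizer, then either $j_0$ or $j_0+n$ falls in the window $(j,j+n]$, forcing $S_j<S_{j_0}=\mu$ or $S_j<S_{j_0+n}=\mu+k$, so $S_j<\mu+k$ regardless of step sizes. The bound $x_i\le 1$ is instead used for the other two parts of the bijection: it guarantees that the walk actually visits every integer level in $\{\mu,\ldots,\mu+k-1\}$ on its way from $\mu$ to $\mu+k$ (surjectivity), and it underlies the last-visit argument that after the final time at level $\ell$ the walk stays strictly above $\ell$, since a walk with upward steps of size at most one cannot go below $\ell$ and later escape to $+\infty$ without passing through $\ell$ again. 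With that attribution corrected the argument is airtight.
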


The next result was proved by T\'akacs~\cite{takacs67comb}, page 12.
\begin{lem}[(Stationary ballot theorem)]\label{statballot}
Let $X_1,X_2,\ldots$ be an infinite sequence of i.i.d. integer random
variables with mean $\mu$ and maximum value~$1$, and for any $i \geq
1$, let $S_i = X_1+\cdots+X_i$.
Then
\[
{\mathbf{P}}\bigl\{S_n > 0\ \forall n\in\{1,2,\ldots\}\bigr\} =
\cases{
\mu,& \quad$\mbox{if } \mu> 0,$ \vspace*{2pt}\cr
0, & \quad$\mbox{if } \mu\le0.$}
\]
\end{lem}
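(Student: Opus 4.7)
The plan is to condition on $S_n$ and apply the Cycle Lemma, then let $n \to \infty$ using the strong law of large numbers. For each fixed $n \ge 1$, i.i.d.\ variables are cyclically interchangeable, so the Cycle Lemma gives $\probC{S_i > 0~\forall~1 \le i \le n}{S_n = k} = k/n$ for every $k \ge 1$, while the conditional probability is $0$ for $k \le 0$. Summing over $k$, and using that $X_i \le 1$ forces $S_n \le n$,
\[
\p{S_i > 0~\forall~1 \le i \le n} \;=\; \sum_{k=1}^n \frac{k}{n}\,\p{S_n = k} \;=\; \frac{\E{S_n^+}}{n},
\]
where $S_n^+ := \max(S_n,0)$.

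Next I would pass to the limit. By the strong law of large numbers, $S_n/n \to \mu$ almost surely, so $S_n^+/n \to \mu^+ := \max(\mu,0)$ almost surely. Since $S_n \le n$ a.s., the sequence $S_n^+/n$ is uniformly bounded by $1$, and bounded convergence gives $\lim_{n\to\infty} \E{S_n^+}/n = \mu^+$. The events $A_n = \{S_i > 0~\forall~1 \le i \le n\}$ form a decreasing sequence with $\bigcap_{n \ge 1} A_n = \{S_n > 0~\forall~n \ge 1\}$, so continuity of probability from above yields
\[
\p{S_n > 0~\forall~n \ge 1} \;=\; \lim_{n \to \infty} \p{A_n} \;=\; \mu^+,
\]
which is $\mu$ when $\mu > 0$ and $0$ when $\mu \le 0$, as required.

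There is no serious obstacle: the only subtle step is verifying the hypotheses for the Cycle Lemma (cyclic interchangeability and maximum value $1$), both of which are immediate from the i.i.d.\ assumption and the standing condition $X_i \le 1$. The rest is a routine combination of the strong law and bounded convergence; in particular the bound $S_n^+/n \le 1$ coming from $X_i \le 1$ is what lets the argument work without any integrability hypothesis on the negative part of $X_1$.
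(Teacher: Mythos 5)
The paper does not prove Lemma~\ref{statballot}; it simply quotes the result from Tak\'acs with a citation, so there is no in-paper proof to compare against. Your argument is correct and gives a clean self-contained derivation. Conditioning on $S_n$ and applying the cycle lemma (the conditional probability is $k/n$ for $0\le k\le n$, and is $0$ for $k\le 0$, while $\p{S_n=k}=0$ for $k>n$ since $X_i\le 1$) correctly yields
\[
\p{S_i > 0~\forall~1 \le i \le n} \;=\; \frac{\E{S_n^+}}{n},
\]
and the passage to the limit is sound: the strong law gives $S_n/n\to\mu$ a.s., the uniform bound $0\le S_n^+/n\le 1$ (again from $X_i\le 1$) lets bounded convergence apply without any integrability hypothesis on $X_1^-$, and continuity of probability from above on the decreasing events $A_n$ finishes the argument. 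For completeness you might note that if $\E{X_1^-}=\infty$, so $\mu=-\infty$, the one-sided strong law still gives $S_n/n\to-\infty$ a.s., hence $S_n^+/n\to 0$ and the same reasoning gives probability $0$, so the $\mu\le 0$ case is covered uniformly. This is essentially the classical Tak\'acs-style derivation, just spelled out.
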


Now let $P$ be a random point set in, say, $[m,n]\times[0,\infty)$.
We recall the definition of the condition $F(m,n,y)$ from Lemma \ref
{lemequivalent}, and will abuse notation by also writing $F(m,n,y,P)$ for
the \textit{event} that the \textit{condition} $F(m,n,y,P)$ holds.
[At times
we write $F(m,n,y)$ in place of $F(m,n,y,P)$, when $P$ is clear from
context.]
Notice that if $P$ is a uniform set of $n-1$ points in $[0,n]\times
[0,\lambda)$, for some $\lambda>0$, then
applying the cycle lemma with $X_i= 1-|[i-1,i)\times[0,\lambda)|_P$
(and so\vadjust{\goodbreak} $S_i= i-|[0,i)\times[0,\lambda)|_P$) for $i=1,\ldots,n$, it
follows that the probability that $F(0,n,\lambda,P)$ occurs is
precisely $1/n$.
By an argument of a similar nature, we can straightforwardly derive the
following lemma (which can also be deduced from an existing result
(\cite{mcdiarmid97mst}, Theorem~4) for invasion percolation on $K_n$
and a limiting argument).
\begin{lem}\label{uniform}
Fix an integer $n \geq1$, and list the $n$ elements of $\pff\cap
([0,n] \times[0,\infty))$ of lowest height as $q_1,\ldots,q_n$, in
increasing order of height.
Then for each $i=1,\ldots,n$, ${\mathbf{P}}\{p_n=q_i\}=1/n$.
\end{lem}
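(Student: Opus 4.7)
The plan is to compute $P(p_n = q_i)$ directly using Lemma~\ref{lem:equivalent} and two applications of the cycle lemma. By Lemma~\ref{boxesproc}, $p_n$ is the unique point at height $\HEIGHT_n$ inside $\BOX_n$, so $p_n = q_i$ if and only if $\HEIGHT_n = y_i$ (the height of $q_i$). Lemma~\ref{lem:equivalent} then expresses the event $\{(\LEFT_n,\HEIGHT_n) = (n - k, y_i)\}$ as the conjunction $E_k \cap F_k \cap G_k$ of three conditions on the points of $\pf$, and these events are disjoint for distinct $k$; hence $P(p_n = q_i) = \sum_{k=1}^{\min(i,n)} P(E_k \cap F_k \cap G_k)$.

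Next, I would pass to the $x$-coordinates. Conditional on the ordered heights $y_1 < \cdots < y_n$, standard Poisson-process facts give that the $x$-coordinates $X_1, \ldots, X_n$ of $q_1, \ldots, q_n$ are i.i.d.\ $\mathrm{Uniform}[0,n]$. In these terms $E_k$ becomes the binomial event ``$X_i \in [n-k, n)$ and exactly $k-1$ of $X_1, \ldots, X_{i-1}$ also lie in $[n-k, n)$,'' giving $P(E_k) = \binom{i-1}{k-1}(k/n)^k((n-k)/n)^{i-k}$. Given $E_k$, the $k-1$ ``right'' $x$-coordinates and the $i-k$ ``left'' $x$-coordinates are conditionally independent and uniform on their respective intervals; $F_k$ depends only on the right coordinates while $G_k$ depends only on the left. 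Applying the cycle lemma to $F_k$ exactly as sketched in the remark preceding the lemma gives $P(F_k \mid E_k) = 1/k$; the same re-centering and discretization argument applied to $G_k$ (whose associated walk of length $n-k$ has total increment $(n-k)-(i-k)=n-i$) gives $P(G_k \mid E_k) = (n-i)/(n-k)$ when $k < n$, and $G_n$ is vacuous so contributes a factor $1$.

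Putting the pieces together, for $i < n$,
\[
P(p_n = q_i) \;=\; \frac{n-i}{n^i}\sum_{k=1}^{i}\binom{i-1}{k-1}\,k^{k-1}(n-k)^{i-k-1},
\]
which equals $1/n$ by a form of Abel's binomial identity (the $k=i$ term being read as $i^{i-1}/n^i$ after cancelling the $(n-i)$ prefactor against $(n-i)^{-1}$). The case $i = n$ reduces to $k = n$ alone, where $P(E_n) = 1$ and the cycle lemma gives $P(F_n \mid E_n) = 1/n$. I expect the main obstacle to be identifying the correct Abel-type identity for the final summation, although once the sum is written down the algebra is routine; alternatively, as the authors note, the lemma can also be deduced from \citet[Theorem~4]{mcdiarmid97mst} by a limiting argument from invasion percolation on $K_n$.
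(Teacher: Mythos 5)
Your proposal is correct, and it takes a genuinely different route from the paper's proof. The paper argues by cyclic symmetry: it fixes the set $P=\{q_1,\ldots,q_n\}$ of the $n$ lowest points of $\pf\cap([0,n]\times[0,\infty))$, forms the $n$ cyclic right-shifts $P^0,\ldots,P^{n-1}$ (all identically distributed), and shows by a contradiction argument using Definition~\ref{dfn:boxes} and Lemma~\ref{boxesproc} that for each fixed $j$ there is exactly one $i$ with $p_n(P^i,\ssi)=q_j^i$; summing over $i$ yields $\p{p_n=q_j}=1/n$ with essentially no computation. Your approach instead enumerates over the length $k=n-\LEFT_n$ of $\BOX_n$: the event $\{p_n=q_i\}$ is the disjoint union $\bigcup_{k\leq i}(E_k\cap F_k\cap G_k)$ via Lemma~\ref{lem:equivalent}, the uniformity of the $x$-coordinates conditional on the heights gives $\p{E_k}=\binom{i-1}{k-1}(k/n)^k((n-k)/n)^{i-k}$, and the cycle lemma applied separately to the (conditionally independent, given $E_k$) right and left halves yields $\probC{F_k}{E_k}=1/k$ and $\probC{G_k}{E_k}=(n-i)/(n-k)$. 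The closing sum does reduce to the Abel--Cauchy identity $\sum_{j=0}^{m}\binom{m}{j}(x+j)^{j}(y+m-j)^{m-j-1}=(x+y+m)^m/y$ with $m=i-1$, $x=1$, $y=n-i$, giving $n^{i-1}/(n-i)$ and hence $\p{p_n=q_i}=1/n$ as required (with the case $i=n$ handled separately as you note). The paper's cyclic argument is shorter and entirely avoids the binomial identity; your direct computation is heavier but produces the explicit conditional box-length distribution $\probC{\LEFT_n=n-k}{p_n=q_i}$ along the way, which is worthwhile extra information.
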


We emphasize that the $n$ elements of $\pff\cap([0,n] \times
[0,\infty
))$ of lowest height may not all be elements of the set $\{p_1,\ldots
,p_n\}$, or indeed of the set $\{p_i\}_{i=1}^{\infty}$.
\begin{pf*}{Proof of Lemma \protect\ref{uniform}}
Fix $n$, and let $\lambda=y(q_n)$.
Clearly, $p_n$ will be among $q_1,\ldots,q_n$.
Also, let $\ssi= \{0,\ldots,n\}$, let $P=\{q_1,\ldots,q_{n}\}$ and for
each $i\in\{0,\ldots,n-1\}$, let $P^i=\{q_1^i,\ldots,q_n^i\}$ be the cyclic
shift of $P$ to the right by distance $i$.
Then for all $i \in\{1,\ldots,n-1\}$, $P^i$ is distributed as $n-1$
uniform points in $[0,n]\times[0,\lambda)$, together with a single uniform
point of height $\lambda$.
We claim that with probability $1$, for each $j=1,\ldots,n$, there is
exactly one $i=i(j,P) \in\{0,\ldots,n-1\}$ for which $p_n(P^i,\ssi)=q_j^i$.
Since the $P^i$ are identically distributed it follows from this claim that
\[
{\mathbf{P}}\{p_n(P,\ssi)\!=\!q_j\} = \frac{1}{n} \sum_{i=0}^{n-1}
{\mathbf{P}}\{p_n(P^i,\ssi)\!=\!q_j^i\} = \frac{1}{n} {\mathbf{P}}\Biggl\{
\bigcup_{i=0}^{n-1}\{p_n(P^i,\ssi)\!=\!q_j^i\}\Biggr\}=\frac{1}{n},
\]
which proves the theorem. It thus remains to prove the above claim,
which we do by contradiction. Thus, suppose that
for some $j \in\{1,\ldots,n\}$, there are distinct $i, i' \in\{
0,\ldots,n-1\}$ for which $p_n(P^i,\ssi)=q_j^i$ and $p_n(P^{i'},\ssi
)=q_j^{i'}$.\vspace*{-2pt}
By replacing $P$ by either $P^{n-i}$ or $P^{n-i'}$ if necessary, we may
assume that $i'=0$. Let $q_j=q_j^0=(x_j,y_j)$.
We must have $|[n-i,n]\times[0,y_j)|_P< i$ [or else $p_n(P,\ssi) \neq
q_j$]; on the other hand, $|[\LEFT_n(P,\ssi),n]\times
[0,y_j]|_P=n-\LEFT
_n(P,\ssi)$.

Let $k=n-\LEFT_n(P,\ssi)$, the length of $\BOX_n(P,\ssi)$.
If $k \geq i$, then we also have $|[\LEFT_n,n-i]\times[0,y_j]|_P \geq
n-\LEFT_n-i+1$, so
$|[\LEFT_n+i,n]\times[0,y_j]|_{P^i} \geq n-\LEFT_n-i+1$ and $\HEIGHT
_n(P^i,\ssi)<y_j$, contradicting the fact that $p_n(P^i,\ssi)=q_j^i$.
It follows that $k < i$, that is, that $i-(n-\LEFT_n) \geq1$.
In this case, we have that for each $m\in\{1,\ldots,i-(n-\LEFT_n)\}$,
$[\LEFT_n-m,\LEFT_n]\times[0,y_j] < m$ (or else we would have either
chosen $h_n$ lower or $\ell_n$ smaller).
Translating the above information to~$P^i$, we see that $|[i-k,i]\times
[0,y_j]|_{P^i}=k$, that
$|[i-k,i]\times[0,y_j)|_{P^i} <k$, and that $|[i-k',i]\times
[0,y_j]|_{P^i} < k'$ for each $k' \in\{k+1,\ldots,i\}$ (see Figure
\ref
{cyclicshift}).
Thus, by Definition~\ref{dfnboxes}\vspace*{-3pt} and Lemma~\ref{boxesproc},
$p_i(P^i,\ssi) = q_j^i$, contradicting the assumption that
$p_n(P^i,\ssi
)=q_j^i$.
\end{pf*}
%
\begin{figure}

\includegraphics{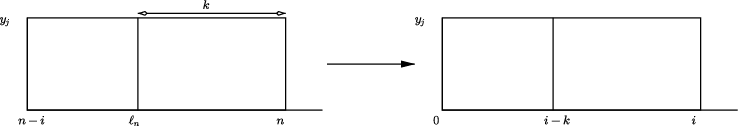}

\caption{The region on the left maps onto the region on the right when
$P$ is sent to $P^i$.}
\label{cyclicshift}
\end{figure}


We next elaborate on a connection between Poisson Galton--Watson trees
and queueing theory that will be useful for many subsequent calculations.

\subsection{A fact from queuing theory and an aside on Poisson
Galton--Watson duality}\label{queueing}
The following basic result was first noted by Borel~\cite{borel1942etb}.
Consider a~queue with Poisson rate $\lambda$ arrivals and constant,
unit service time, started at time zero with a single customer in the
queue, and with any arbitrary servicing rule (i.e., not necessarily
first-in first-out). We may form a rooted tree associated with the
queueing process run until the first time $\tau$ that there are no
customers in the queue (or forever, if the queue is never empty), in
the following manner.
If a new customer joins the queue at time~$t$, he is joined to the
customer being served at time~$t$.
We denote the resulting rooted tree by $\st$. Then $\st$ is distributed
as a Poisson($\lambda$) Galton--Watson tree [we write $\PGW(\lambda)$,
for short]~\cite{borel1942etb}.

If the arrival times are given by the $x$-coordinates of the points of
Poisson process $\mathcal{Q}=\pff\cap([0,\infty)\times[0,\lambda))$,
we may also associate an interpolated random walk to the process, by setting
$S_t=|[0,t)\times[0,\lambda)|_{\mathcal Q}-t$ for $t \in\mathbb{R}^+$.
Then~$|\st|$ is simply the first time $t$ that $S_t = -1$, that is,
that $|[0,t)\times[0,\lambda)|_{\mathcal Q}=t-1$.
Note that given that $|T|=m< \infty$, $\mathcal{Q}$ is distributed as
$m-1$ independent uniform points in $[0,t)\times[0,\lambda]$,
conditioned on $F(0,n,\lambda,\mathcal{Q})$ occurring.
We also observe that for all $i \leq|\st|$, $S_i=S^{0,\lambda^-}_i$,
where $S^{0,\lambda^-}$ is the random walk defined in Section~\ref
{secrw}. This has immediate implications for the events defined in
Section~\ref{secrw}.
In particular, the tree $\mathcal T$ is infinite if and only if
$S^{0,\lambda^-}$ survives.
It follows that for all $0 \le h \le1$ we have ${\mathbf{P}}\{
S^{k,\lambda^-} \mbox{ survives}\}=0$, and for all $\lambda>1$ we
have ${\mathbf{P}}\{S^{k,\lambda^-} \mbox{ survives}\}=\theta
(\lambda)$,
the probability of survival of a $\PGW(\lambda)$ branching process.
Similarly, by Lemma~\ref{statballot}, we have that the probability that
there is ever a time~$t$ at which the total number of arrivals is at
least $t$, is $\min(\lambda,1)$. Thus,
if $0 < \lambda\le1$, then ${\mathbf{P}}\{S^{k,\lambda^-} \mbox{ has a
chance}\}=\lambda$, and if $\lambda> 1$ then ${\mathbf{P}}\{
S^{k,\lambda^-} \mbox{ has a chance}\}=1$.
Of course, the exact same identities hold with $S^{k,\lambda^-}$
replaced by $S^{k,\lambda}$, $L^{k,\lambda^-}$ or $L^{k,\lambda}$.

We continue to think of arrival times as given by points of $\mathcal{Q}$.
It will be useful for us to view the above queuing procedure as
creating a tree whose nodes are labeled by integers rather than by
elements of the queue. We do so by re-labeling each node of $\st$
(i.e., each customer $c$) with the (integer) time at which $c$ begins
being served. Furthermore, suppose that we take as our servicing rule
the invasion percolation rule---that is, the rule that prioritizes
customers (points of $\mathcal Q$) with lower $y$-coordinate over those
with higher $y$-coordinate---and call the resulting tree $\st_{\lambda}$.
Then $\st_{\lambda}$ is precisely the subtree of $\IPC(\mathcal
P,\mathbb N)$ containing the root and all nodes joined to the root by
paths all of whose edges have weight less than $\lambda$. Of course,
everything still holds if we take
$\mathcal{Q}=\pff\cap([0,\infty)\times[0,\lambda])$---that is,
if we
include points at height precisely $\lambda$---as long as we replace
$S^{0,\lambda-}$ by $S^{0,\lambda}$ and replace the phrase ``less than
$\lambda$'' by ``at most $\lambda$.''

As a consequence of the above discussion we have the following
important fact.
\begin{lem}\label{ipcpgw}
Fix any integer $n \geq1$, any $\lambda> 0$, and let $P$ be a set of
$n-1$ independent uniform points in $[0,n]\times[0,\lambda)$.
Given that $F(0,n,\lambda,P)$ occurs,
the tree $\IPC(P,\{0,\ldots,n-1\})$ is distributed as $\PGW(\lambda)$
conditioned to have $n$ nodes. Furthermore, suppose that $p$ is a
uniformly random point on the line segment $[0,n]\times\{\lambda\}$.
Then under the same conditioning, $\IPC(P\cup\{p\},\{0,\ldots,n\})$ is
distributed as $\PGW(\lambda)$ conditioned to have $n$ nodes, together
with an additional vertex (vertex $n$), joined to a
uniformly random element of $0,\ldots,n-1$.
\end{lem}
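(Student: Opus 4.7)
The plan is to invoke the Poisson Galton--Watson--queueing correspondence established in Section~\ref{queueing}. Write $\mathcal Q = \pf \cap \left([0,\infty)\times[0,\lambda)\right)$ and let $\st_\lambda$ be the queue tree obtained from $\mathcal Q$ with invasion-percolation servicing; recall that $\st_\lambda$ is distributed as $\PGW(\lambda)$ and is precisely the subtree of $\IPC(\pf,\mathbb N)$ reachable from the root by edges of weight less than $\lambda$.

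For the first assertion I would proceed in three steps.  \textbf{Step 1.} Using the walk interpretation from Section~\ref{queueing}, unpack the event $\{|\st_\lambda|=n\}$ and show that, almost surely, it coincides with the event that $\mathcal Q$ has exactly $n-1$ points in $[0,n)\times[0,\lambda)$ and that $F(0,n,\lambda,\mathcal Q)$ holds; the cardinality ensures that $S_n=-1$, while $F$ translates into ``the walk $S$ has not reached $-1$ earlier''.  \textbf{Step 2.} By standard Poisson conditioning, the law of $\mathcal Q\cap([0,n)\times[0,\lambda))$ given this event is that of $n-1$ i.i.d.\ uniform points in $[0,n)\times[0,\lambda)$ conditioned on $F$.  \textbf{Step 3.} Verify that the first $n-1$ iterations of $\IPC(\pf,\mathbb N)$ involve only points in $[0,n)\times[0,\lambda)$ and produce exactly $\st_\lambda$; hence $\IPC(P,\{0,\ldots,n-1\})$ given $F(0,n,\lambda,P)$ has the same law as $\st_\lambda$ given $|\st_\lambda|=n$, which by the duality is $\PGW(\lambda)$ conditioned to have $n$ nodes.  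The boundary distinction between $[0,n)$ and $[0,n]$ is irrelevant since it has measure zero.

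For the second assertion, let $P'=P\cup\{p\}$.  Since $y(p)=\lambda$ strictly exceeds $y(q)$ for every $q\in P$, and since condition $F$ guarantees that $\IPC$ on $P$ alone completes all $n-1$ steps (at each such step there is an available candidate of height $<\lambda$), adding $p$ leaves the first $n-1$ iterations unchanged.  After these $n-1$ steps, $P$ has been exhausted, so at iteration $i=n-1$ the only remaining candidate in $[0,n)$ is $p$ (almost surely $x(p)<n$); hence $p_n=p$ and $e_n=(n,\lfloor x(p)\rfloor)$.  Because $\lfloor x(p)\rfloor$ is uniform on $\{0,\ldots,n-1\}$ independently of $P$, the resulting tree is $\PGW(\lambda)$ conditioned on $n$ nodes together with vertex $n$ attached independently to a uniformly chosen element of $\{0,\ldots,n-1\}$.

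The main obstacle is Step~1: carefully translating the probabilistic event ``the queue empties for the first time at integer time $n$'' into the combinatorial event that $F(0,n,\lambda,\mathcal Q)$ holds and $|\mathcal Q\cap([0,n)\times[0,\lambda))|=n-1$.  Once this identification is in place, the rest is a straightforward rephrasing of the queueing duality together with standard properties of Poisson processes.
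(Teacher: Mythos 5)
Your proposal is correct and follows essentially the same route the paper intends: the paper presents Lemma~\ref{ipcpgw} as an immediate ``consequence of the above discussion'' (the Borel queueing correspondence and the observation that $\st_\lambda$ is the subtree of $\IPC(\pf,\mathbb N)$ reached by edges of weight $<\lambda$), offering no separate proof, and your three steps plus the analysis of the extra point $p$ are exactly the details that the paper leaves implicit. The translation in your Step~1 is indeed routine: given $|[0,n)\times[0,\lambda)|_{\mathcal Q}=n-1$, condition $F(0,n,\lambda,\mathcal Q)$ ($|[n-m,n)\times[0,\lambda)| < m$ for $0<m\le n$) rewrites, by complementation inside $[0,n)$, to $S_i\ge 0$ for $1\le i<n$, and since the queue-length walk has downward steps of size exactly one, ``first hit of $-1$ is at time $n$'' is precisely $\{S_i\ge 0\ \forall\, 1\le i<n,\ S_n=-1\}$.
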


We remark that for any fixed $n$, the distribution of $\PGW(\lambda)$
conditioned to have~$n$ nodes does not depend on $\lambda$ and is
precisely that of a uniformly random labelled rooted tree (or Cayley
tree) on~$n$ nodes, after the labels but not the orders of children
have been discarded.
It turns out that a version of Lemma~\ref{ipcpgw} also holds for the
box tree; see Lemma~\ref{lemboxespgw}, below.
As noted just after Lemma~\ref{statballot}, the probability that
$F(0,n,\lambda,P)$ occurs is precisely $1/n$.
Thus, the distribution of $|\pgw(\lambda)|$ is given by
\[
{\mathbf{P}}\{|\pgw(\lambda)|=n\}=\frac{1}{n}{\mathbf{P}}\{\operatorname
{Poisson}(\lambda n)=n-1\}=\frac{e^{-\lambda n}(\lambda n)^{n-1}}{n!},
\]
for all positive integers $n$ (a well-known fact which we record for
later reference). When $\lambda=1$ this is called the \textit{Borel}
distribution.

We briefly explain a further basic fact about the function $\theta
(\lambda)=\break{\mathbf{P}}\{|\pgw(\lambda)|=\infty\}$ and about
Poisson Galton--Watson duality.
By considering the number of children in the first generation of $\pgw
(\lambda)$, we see that $1-\theta(\lambda) = e^{-\lambda\theta
(\lambda
)}$, and
by differentiating this identity, we see that
%
%
\begin{equation}\label{theta}
\theta'(\lambda)\bigl(1-\lambda\bigl(1-\theta(\lambda)\bigr)\bigr) =
\theta(\lambda
)\bigl(1-\theta(\lambda)\bigr),
\end{equation}
an equation we will have use of later.
Next, given $\lambda> 1$, let $m=m(\lambda) < 1$ be such that
$\lambda
e^{-\lambda}=me^{-m}$ (we call $m$ the \textit{dual parameter} for
$\lambda
$). Then $m=\lambda(1-\theta(\lambda))$, from which it is easily
seen that conditional on being finite, $\pgw(\lambda)$ is distributed
precisely as $\pgw(m)$.

\subsection{$\IPC(\pff,\N)$ and the forward maximal process}\label{fmp}
By Corollary~\ref{corbackbone},\break $\IPC(\pff,\N)$ consists of a unique
infinite backbone which in particular passes through the nodes $\{n_i\}
_{i=0}^{\infty}$, and from all nodes\vadjust{\goodbreak} of which emerge finite branches.
Let the edges of the backbone be $e_1,e_2,\ldots,$ and for each integer
$i \geq1$ let $W_i = \sup_{j \geq i} W_{e_j}$, so $\{W_i\}
_{i=1}^{\infty}$ is the PWIT forward maximal process.
From the perspective of the PWIT, the nodes $n_i$ are the nodes at
which the forward maximal weight along the backbone decreases.

Lemma~\ref{ipcpgw} allows us to provide another picture of the
structure of\break
$\IPC(\pff,\N)$. First, for each integer $i \geq0$, let
$T_i=T_i(\pff,\N
)$ be the subtree of $\IPC(\pff,\N)$ on nodes $n_{i},\ldots,n_{i+1}-1$
(these nodes induce a tree by Lemma~\ref{lemboxpath}). The set
$P=\pff\cap([n_{i},n_{i+1})\times[0,\HEIGHT_{n_{i+1}}))$ is
distributed as $(n_{i+1}-n_{i}-1)$ independent uniform points,
conditional on $F(n_{i},n_{i+1},\HEIGHT_{n_{i+1}},P)$ occurring.
Furthermore, $\pff\cap([n_i,n_{i+1})\times\{\HEIGHT_{n_{i+1}}\})$
contains a single uniform point. Thus, by Lemma~\ref{ipcpgw}, we obtain
that $T_i$ is distributed as
$\pgw(\HEIGHT_{n_{i+1}})$ conditioned to have $n_{i+1}-n_i$ nodes, and
that $n_{i+1}$ is joined to a uniformly random element of $T_i$.
Applying this for all~$i$, we obtain the following theorem.

\begin{thmm}\label{conditionalipc}
Given $\{n_i\}_{i=0}^{\infty}$, $\IPC(\pff,\N)$, viewed as an unlabeled
tree, can be built as follows. For each integer $i \geq0$ let $T_i$ be
a uniformly random labeled tree on $n_i-n_{i-1}$ vertices.
For each integer $i \geq1$, join the root of $T_i$ to a uniformly
random vertex of $T_{i-1}$. Finally, discard all labels.
\end{thmm}

\begin{remark*} It also follows straightforwardly from Lemma \ref
{ipcpgw} that given $\{n_i\}_{i=0}^{\infty}$ and $\{\HEIGHT
_{n_{i+1}}\}
_{i=0}^{\infty}$, $\IPC(\pff,\N)$ viewed as a \textit{weighted}
unlabeled tree
can be built from the tree described in Theorem~\ref{conditionalipc} as
follows. Independently for each integer $i \geq0$ and each edge $e$ of
$T_i$, assign $e$ a random weight with
$\operatorname{Uniform}[0,\HEIGHT_{n_{i+1}}]$ distribution. Also, for each
integer $i \geq0$, give the unique edge from $T_{i+1}$ to $T_i$ the
weight $\HEIGHT_{n_{i+1}}$. We omit the details.
\end{remark*}

We next show that $\{(|T_{i-1}|,\HEIGHT_{n_i})\}_{i=1}^{\infty}=\{
(n_i-n_{i-1},\HEIGHT_{n_i})\}_{i=1}^{\infty}$ is a Markov process and
specify the transition probabilities.
First, for any $i \geq1$, given~$\HEIGHT_{n_{i}}$, the set
$\mathcal{Q}=\pff\cap([n_i,\infty)\times[0,\HEIGHT_{n_i}))$ is
precisely a Poisson point process\vspace*{-2pt} of intensity $1$ conditioned
on the event that $S^{n_i,\HEIGHT_{n_i^-}}(\mathcal{Q})$ survives
(which is precisely the event that $\mathcal{Q}$ is
$\{n_i,n_{i+1},\ldots,\}$-reasonable). Furthermore, given~$\HEIGHT
_{n_i}$, the condition $G(n_i,\HEIGHT_{n_i}^-)$ holds for $\pff$.
Thus, by Lemma~\ref{lemrestrict},\vspace*{1pt} we can determine the structure of
$\IPC(\pff,\N)$ restricted to
$\{n_i,\ldots,\infty\}$ by considering only the points in $\mathcal{Q}$.
It follows that $\{(n_i-n_{i-1},\HEIGHT_{n_i})\}_{i=1}^{\infty}$ is a
Markov process, as claimed (and also that $\{\HEIGHT_{n_i}\}
_{i=1}^{\infty}$ is a Markov process).
Next, for $1 < y < h$, let
\[
f_h(y) = \lim_{dy \to0} \frac{\mathbf{P}\{ \HEIGHT_{n_{i+1}} \in
dy \mid\HEIGHT_{n_i}=h \}}{dy},
\]
and for $n > 0$ let
\[
f_h(n,y) = \lim_{dy \to0} \frac{\mathbf{P}\{ \HEIGHT_{n_{i+1}}
\in dy,(n_{i+1}-n_i)=n \mid\HEIGHT_{n_i}=h \}}{dy},
\]
so $f_h(y) = \sum_n f_h(n,y)$. By the above comments, $f_h(y)$ and
$f_h(n,y)$ do not depend on $i$.\vadjust{\goodbreak}
\begin{lem}\label{forwarddensities}
For all $i \geq1$ and $1 < y < h$,
\[
f_h(y)=\frac{\theta'(y)}{\theta(h)} \quad\mbox{and} \quad f_h(n,y) =
\frac{\theta(y)}{\theta(h)} \frac{e^{-yn } (y n)^{n-1}}{(n-1)!}.
\]
\end{lem}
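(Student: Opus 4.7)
The strategy is to combine the Markov-property reduction made just before the lemma with the queueing / $\PGW$ interpretation of Section \ref{queueing}, and then to decompose the target event into three independent pieces supported on disjoint rectangles of the underlying Poisson process.

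Reduce to the shifted frame $n_i=0$: by the discussion preceding the lemma (and Lemma \ref{lem:restrict}), it suffices to compute the joint density of $(n_1,\HEIGHT_{n_1})$ when $Q:=\pf\cap([0,\infty)\times[0,h))$ is a rate-one Poisson process conditioned on $S^{0,h^-}(Q)$ surviving. That conditioning event has probability $\theta(h)$ and, by Section \ref{queueing}, coincides with the event that the IPC tree built from $Q$ (which is $\pgw(h)$) is infinite. For each $y<h$, write $T_y^-$ for the subtree of this IPC reachable from $0$ through edges of weight strictly less than $y$; by that same section $T_y^-$ is (unconditionally) $\pgw(y)$, arising as the sub-queue built from $Q\cap([0,\infty)\times[0,y))$. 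Set $Y^\ast:=\HEIGHT_{n_1}$; the pond description in the paragraph before Theorem \ref{conditionalipc} identifies the first pond with $T_{Y^\ast}^-$, so $n_1=|T_{Y^\ast}^-|$ and $Y^\ast=\inf\{y:T_y^-\text{ is infinite}\}$.

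Up to events of probability $o(dy)$, $\{n_1=n,\,Y^\ast\in(y,y+dy)\}$ is the intersection of: (A) $|T_y^-|=n$, depending only on $Q\cap([0,n)\times[0,y))$, with probability $\p{|\pgw(y)|=n}=e^{-yn}(yn)^{n-1}/n!$ by Lemma \ref{ipcpgw} and the Borel formula; (B) $Q$ has exactly one point in $[0,n)\times(y,y+dy]$, which becomes vertex $n$ attached to $T_y^-$ by an edge of weight in $(y,y+dy)$ (probability $n\,dy+o(dy)$); and (C) the subtree of IPC rooted at this new vertex $n$ and using edges of weight less than $y$, which by the queueing identification (Section \ref{queueing}) applied to the sub-queue starting at vertex $n$ is a fresh $\pgw(y)$ built from $Q\cap([n,\infty)\times[0,y))$, is infinite (probability $\theta(y)$). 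These three rectangles of $Q$ are pairwise disjoint, so (A), (B), (C) are independent; since $(A)\cap(B)\cap(C)\subseteq\{S^{0,h^-}\text{ survives}\}$, dividing by $\theta(h)$ yields
\[
f_h(n,y)=\frac{\theta(y)}{\theta(h)}\cdot\frac{e^{-yn}(yn)^{n-1}}{(n-1)!}.
\]

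For the marginal, rewrite $\frac{e^{-yn}(yn)^{n-1}}{(n-1)!}=n\,\p{|\pgw(y)|=n}$ and sum to get $\E{|\pgw(y)|\,;\,|\pgw(y)|<\infty}$. By the Poisson Galton--Watson duality from Section \ref{queueing}, $\pgw(y)$ conditioned on being finite is $\pgw(m)$ with dual parameter $m=y(1-\theta(y))<1$, whose mean is $(1-m)^{-1}$; hence
\[
\sum_{n\ge1}n\,\p{|\pgw(y)|=n}=\frac{1-\theta(y)}{1-y(1-\theta(y))}=\frac{\theta'(y)}{\theta(y)},
\]
the last equality being \eqref{theta}. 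Multiplying by $\theta(y)/\theta(h)$ yields $f_h(y)=\theta'(y)/\theta(h)$. The chief subtlety is verifying the equivalence in (A)--(C) up to $o(dy)$ together with the independence coming from the disjointness of the three Poisson rectangles.
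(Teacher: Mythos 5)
Your derivation of $f_h(n,y)$ is essentially the paper's: you factor the event into a Borel-sized $\pgw(y)$ tree, a single Poisson point in the thin strip, and survival of the walk to the right, with independence coming from disjoint Poisson rectangles. The paper packages (A) slightly differently -- it separates the count $|[0,n]\times[0,y)|_{\pf}=n-1$ from the cycle-lemma factor $1/n$ rather than quoting the Borel mass $\p{|\pgw(y)|=n}$ directly -- but the decomposition and the independence argument are the same, and your caveat about the $o(dy)$ bookkeeping is exactly the right thing to flag.

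Where you genuinely diverge is in the marginal $f_h(y)$. The paper computes it first and directly: $\probC{\HEIGHT_{n_{i+1}} \leq y}{\HEIGHT_{n_i}=h}$ is a ratio of two survival probabilities, namely $\theta(y)/\theta(h)$, and then one differentiates in $y$. You instead sum $f_h(n,y)$ over $n$, recognize $\sum_n n\,\p{|\pgw(y)|=n}$ as $\E{|\pgw(y)|\,;\,|\pgw(y)|<\infty}$, and evaluate it via Poisson Galton--Watson duality (conditioning on finiteness gives $\pgw(m)$ with $m=y(1-\theta(y))$, whose mean total progeny is $(1-m)^{-1}$), finally invoking the identity $\theta'(y)\bigl(1-y(1-\theta(y))\bigr)=\theta(y)(1-\theta(y))$ from the paper. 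Your route is correct and has the virtue of deriving the marginal as a consistency check on the size-resolved formula, but it is longer and passes through more machinery; the paper's direct ratio-then-differentiate argument is shorter and requires no summation or duality. As a minor remark, your route also implicitly requires justifying the interchange of limit ($dy\to 0$) and sum over $n$, which is harmless here but worth noting; the paper's argument sidesteps this entirely.
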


Combining the two results in Lemma~\ref{forwarddensities}, the
following corollary is immediate.
\begin{cor}\label{forwardsizes}
For all integers $n,i \geq1$ and $y > 1$,
%
%
\begin{equation}\label{forwardsizeeq}
\mathbf{P}\{ n_{i+1}-n_i=n \mid\HEIGHT_{n_{i+1}}=y \} = \frac{\theta
(y)}{\theta
'(y)} \frac{e^{-ny} (ny)^{n-1}}{(n-1)!}.
\end{equation}
\end{cor}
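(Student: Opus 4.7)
The plan is to derive Corollary \ref{forwardsizes} directly from Lemma \ref{forwarddensities} by dividing the joint density $f_h(n,y)$ by the marginal density $f_h(y)$. By the definitions given just before the statement of Lemma \ref{forwarddensities}, for any $h > y > 1$ we have
\[
\probC{n_{i+1}-n_i=n}{\HEIGHT_{n_{i+1}}=y,\,\HEIGHT_{n_i}=h} \;=\; \frac{f_h(n,y)}{f_h(y)}.
\]
Substituting the two explicit expressions from Lemma \ref{forwarddensities}, the common factor $1/\theta(h)$ cancels and the ratio simplifies cleanly to $\frac{\theta(y)}{\theta'(y)} \frac{e^{-ny}(ny)^{n-1}}{(n-1)!}$, which is exactly the right-hand side of \eqref{forwardsizeeq}.

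The crucial observation is that this ratio does not depend on $h$. Combined with the Markov property of $\{(n_i-n_{i-1},\HEIGHT_{n_i})\}_{i \geq 1}$ established in Section \ref{fmp} — under which $(n_{i+1}-n_i,\HEIGHT_{n_{i+1}})$ depends on the past only through $\HEIGHT_{n_i}$ — averaging the above identity against the conditional density of $\HEIGHT_{n_i}$ given $\HEIGHT_{n_{i+1}}=y$ (which is supported on $h > y$) leaves the expression unchanged. Hence the conditioning on $\HEIGHT_{n_i}=h$ may be removed, yielding precisely the stated formula.

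As the authors advertise, this is an immediate two-step algebraic deduction, and no serious obstacle arises. The only point requiring a moment of thought is checking that dropping the conditioning on $\HEIGHT_{n_i}$ truly preserves the conditional probability, but this is a direct consequence of the $h$-independence of the ratio together with the Markov structure already in hand.
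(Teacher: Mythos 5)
Your argument is correct and matches the paper's intended (one-line) derivation: the ratio $f_h(n,y)/f_h(y)$ from Lemma~\ref{forwarddensities} gives $\probC{n_{i+1}-n_i=n}{\HEIGHT_{n_{i+1}}=y,\HEIGHT_{n_i}=h}$, the factors of $\theta(h)$ cancel to yield an $h$-independent expression, and disintegrating over the conditional law of $\HEIGHT_{n_i}$ given $\HEIGHT_{n_{i+1}}=y$ then removes the extra conditioning without changing the value. This is exactly what the paper means by ``the following corollary is immediate.''
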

\begin{pf*}{Proof of Lemma \protect\ref{forwarddensities}}
We have
\begin{eqnarray*}
\mathbf{P}\{ \HEIGHT_{n_{i+1}} \leq y \mid\HEIGHT_{n_i}=h \}
& =& \mathbf{P}\{ S^{n_i,y}(\pff)\mbox{ survives} \mid S^{n_i,h^-}(\pff
)\mbox{ survives} \} \\
& =& \frac{{\mathbf{P}}\{S^{n_i,y}(\pff)\mbox{ survives}\}}{{\mathbf
{P}}\{S^{n_i,h^-}(\pff)\mbox{ survives}\}} \\
& =& \frac{\theta(y)}{\theta(h)},
\end{eqnarray*}
and the first claim of the lemma follows by differentiation.

As mentioned, $f_h(y)$ and $f_h(n,y)$ do not depend on $i$, so we take
$i=0$ (and thus $n_i=0$).
In order to have $n_1-n_0=n$ and $\HEIGHT_{n_1} \in dy$, we need that
$|[0,n]\times[0,y)|_{\pff}=n-1$, that $|[0,n]\times[y,y+dy)|_{\pff}=1$,
that $F(0,n,y,\pff)$ occurs
and that $S^{n,y^-}(\pff)$ survives. The probabilities of the first two
events are easily bounded. The probability of $F(0,n,y,\pff)$ given that
$|[0,n]\times[0,y)|_{\pff}=n-1$ is
$1/n$ by the cycle lemma. Finally, the event that $S^{n,y^-}(\mathcal
{P})$ survives is independent of the first three events, and has
probability $\theta(y)$. Thus,
\begin{eqnarray*}
&&\mathbf{P}\{ \HEIGHT_{n_{i+1}} \in dy,(n_{i+1}-n_i)=n \mid\HEIGHT
_{n_i}=h \}\\
&&\qquad=
\frac{e^{-yn } (y n)^{n-1}}{(n-1)!} \cdot\bigl(1+o(dy)\bigr) n \,dy
\cdot\frac
{1}{n} \cdot\theta(y) \cdot\frac{1}{\theta(h)},
\end{eqnarray*}
from which the second claim of the lemma follows.
\end{pf*}

We next derive the distribution of the distance along the backbone
between $n_{i-1}$ and $n_{i}$.
For $i \geq0$ let $d_i=d_i(\pff,\ssi) = d_{\IPC}(n_i,n_{i+1})$. As with
the quantities studies above, we have that given $\HEIGHT_{n_{i+1}}$,
$d_i$ is independent of the past.\vspace*{-2pt}
For $0 < x < 1$ we say $X\eqdist\operatorname{Geometric}(x)$ if
${\mathbf
{P}}\{X=k\}=x^k(1-x)$.
\begin{thmm}\label{geom}
For all $i \geq0$ and all $y > 1$, given that $\HEIGHT_{n_{i+1}}=y$,
$d_i \eqdist1+ \operatorname{Geometric}(m(y))$.
\end{thmm}

The following theorem derives the distributions of the trees hanging
off the backbone and within a given pond.\vadjust{\goodbreak}
\begin{thmm}\label{offbackbone}
Fix $i \geq0$ and $y > 1$. Given that $\HEIGHT_{n_{i+1}}=y$,
for all $k$ with $n_i \leq k < n_{i+1}$ and for which $k$ is on the
backbone, the subtree of $\IPC(\pff,\N)$ containing $k$ and
containing no other vertices of the backbone, is distributed as
$\pgw(m(y))$.\vspace*{-4pt}
\end{thmm}

Together, Theorems~\ref{geom} and~\ref{offbackbone} provide another
Markovian characterization of $\IPC(\pff,\N)$:
we may construct a tree with the distribution of $\IPC(\pff,\N)$ by
growing the trees hanging off the backbone one-by-one,
where the branching distribution of the trees depends on the current
forward maximal weight.
(This characterization is exactly that which is claimed in Theorem \ref
{main3}, which is proved below.)
The forward maximal weight process
evolves according to the dynamics implied by Lemma \ref
{forwarddensities} and Theorem~\ref{geom}:
first stay constant for a geometric amount of time depending on the
current forward maximal weight, then decrease the maximal weight
according to Lemma~\ref{forwarddensities}.
This characterization is essentially the $\sigma\to\infty$ limit of
results of Angel et~al.~\cite{angel2006ipr}
described in the \hyperref[secintro]{Introduction}, for
invasion percolation on the regular $\sigma$-ary tree. However, it does
not seem trivial
to derive these results from theirs by a limiting argument and local
weak convergence, since they depend not only on the graph structure of
the tree but also on the weights.

If one wishes, at this point one can apply all the methodology of \cite
{angel2006ipr} to see that corresponding results hold for invasion
percolation on the PWIT: notably, convergence to the Poisson lower envelope,
mutual singularity of IPC and IIC measures and spectral asymptotics all
hold for invasion percolation on the PWIT. We have not included the
details as the development is essentially technical, requiring no significant
ideas not already found in~\cite{angel2006ipr}.

Since $m(\lambda)=\lambda(1-\theta(\lambda))$, we also obtain the
following corollary of Theorems~\ref{conditionalipc},~\ref{geom}
and~\ref{offbackbone} and Corollary~\ref{forwardsizes}, which is new as
far as we know.
For any $0 < p < 1$, let $D = \operatorname{Geometric}(p)$, and let
$r=v_0,v_1,\ldots,v_D=v$ be a path of length $D$. For each $k\in\{
0,\ldots,D\}e$,
starting from $v_k$ grow a $\PGW(p)$ tree with root $v_k$. This yields
a triple $(T,r,v)_p$.\vspace*{-4pt}
\begin{cor}
Fix any $0 < p < 1$ and let $(T,r,v)_p$ have the distribution described
above. Then conditional on $|T|$, $T$ is distributed as $\PGW(p)$
conditioned to have size $|T|$, and $v$ is distributed as
a uniformly random node of $T$. Furthermore, let $y=m^{-1}(p)$. Then
for all $n \geq1$, ${\mathbf{P}}\{|T|=n\}$ is given by the right-hand
side of (\ref{forwardsizeeq}).\vspace*{-4pt}
\end{cor}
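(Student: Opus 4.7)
The plan is to recognize $(T,r,v)_p$ as the law of a single pond of $\IPC(\pf,\N)$ conditional on its forward-maximal height being $y=m^{-1}(p)$, and then to read off the three claims from results of Subsection~\ref{fmp}. Concretely, setting $p=m(y)$, I would identify $(T,r,v)_p$ with pond $i+1$ of $\IPC(\pf,\N)$, viewed as a rooted tree with root $n_i+1$ and distinguished ``exit'' vertex $n_{i+1}$, conditional on $\HEIGHT_{n_{i+1}}=y$.

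For the identification, the spine of pond $i+1$ is the backbone segment from $n_i+1$ to $n_{i+1}$. Theorem~\ref{geom} gives $d_i=d_{\IPC}(n_i,n_{i+1})\eqdist 1+\mathrm{Geometric}(m(y))$, so this spine has $d_i-1\sim\mathrm{Geometric}(p)$ edges, matching the spine length $D$ in the construction. Theorem~\ref{offbackbone} says that each subtree hanging off a spine vertex (and containing no other backbone vertex) is $\PGW(m(y))=\PGW(p)$. The joint law then follows once one checks conditional independence of these subtrees given the spine; this is the only step that needs care, and the tool is the Markov/restriction principle of Lemma~\ref{lem:restrict} together with the Markovian arguments underlying the proofs of Theorems~\ref{geom} and~\ref{offbackbone}.

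Given the identification, all three assertions are immediate. Since $|T|=n_{i+1}-n_i$ under the coupling, Corollary~\ref{forwardsizes} yields
\[
\p{|T|=n}=\frac{\theta(y)}{\theta'(y)}\frac{e^{-ny}(ny)^{n-1}}{(n-1)!},
\]
which is the right-hand side of~(\ref{forwardsizeeq}). Theorem~\ref{conditionalipc} says that, given pond size $n$, the pond viewed as an unlabelled rooted tree is distributed as a uniformly random labelled rooted tree on $n$ vertices with its labels discarded, which by the remark after Lemma~\ref{ipcpgw} is the law of $\PGW(p)$ conditioned to have $n$ nodes; the same theorem attaches the next pond at a uniformly random vertex, so $n_{i+1}$ is uniform in pond $i+1$, yielding that $v$ is uniform in $T$. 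The hard part, if anything, is the conditional-independence check in the identification step; everything else is a reshuffling of Theorems~\ref{geom}, \ref{offbackbone}, \ref{conditionalipc}, and Corollary~\ref{forwardsizes}.
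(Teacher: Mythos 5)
Your proposal is essentially the paper's (unstated) argument: the paper gives no proof, only that this ``is a corollary of Theorems~\ref{conditionalipc},~\ref{geom},~\ref{offbackbone} and Corollary~\ref{forwardsizes},'' and you reconstruct exactly that chain. Two small points of imprecision in the identification step, however. First, the coupling should be $(T,r,v)_p\eqdist(T_i,n_i,v_i)$ conditional on $\HEIGHT_{n_{i+1}}=m^{-1}(p)$, where $T_i$ is the subtree of $\IPC(\pf,\N)$ on $\{n_i,\dots,n_{i+1}-1\}$ with root $n_i$, and $v_i=\lfloor x(p_{n_{i+1}})\rfloor$ is the vertex of $T_i$ to which $n_{i+1}$ attaches; your ``root $n_i+1$'' is generally not even on the backbone, and $n_{i+1}$ is not a vertex of $T_i$ at all (it belongs to $T_{i+1}$). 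With $D=d_{T_i}(n_i,v_i)=d_i-1$ this gives $D\sim\mathrm{Geometric}(p)$ by Theorem~\ref{geom}, and $v_i$ uniform in $T_i$ by Theorem~\ref{conditionalipc}. Second, you rightly flag the joint-independence issue for the subtrees along the spine: Theorem~\ref{offbackbone} as stated gives only the marginal law of each off-backbone subtree, and the independence must be extracted from its proof (the events $F_1,\dots,F_4$ depending on disjoint edge sets, applied along the spine) rather than from the theorem statement itself. With these fixed, the rest follows as you say from Theorem~\ref{conditionalipc} (uniform Cayley tree given size, $v_i$ uniform) and Corollary~\ref{forwardsizes} (size distribution).
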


We now turn to the proof of Theorem~\ref{geom}.
Recall that for $y > 1$, $m(y)$ is the dual parameter for $y$.
We will make use of the following easy (and known) lemma.\vspace*{-4pt}
\begin{lem}\label{cayley}
Let $T$ be a Cayley tree of order $n$ with root $r$, and let $v$ be a
uniformly random node in $T$. Then
\[
{\mathbf{P}}\{d_T(r,v)=k-1\} = \frac{k (n)_{k}}{n^{k+1}}\vspace*{-3pt}
\]
where $(n)_{k}=n(n-1)\cdots(n-k+1)$.\vspace*{-3pt}\vadjust{\goodbreak}
\end{lem}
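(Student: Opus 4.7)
The plan is to count directly the number of pairs $(T,v)$ with $d_T(r,v)=k-1$, and divide by the total number of (tree, vertex) pairs. Throughout, I fix the root label to be $r$, so the total number of (tree rooted at $r$, vertex $v$) pairs is $n^{n-2}\cdot n = n^{n-1}$, since by Cayley's formula there are $n^{n-2}$ labelled trees on $[n]$.

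First I would decompose the event according to the $r$-to-$v$ path. There are $(n-1)_{k-1}=(n-1)(n-2)\cdots(n-k+1)$ ways to choose an ordered sequence $u_1,\ldots,u_{k-1}$ of distinct elements of $[n]\setminus\{r\}$; such a sequence specifies a potential path $r=u_0,u_1,\ldots,u_{k-1}=v$ of length $k-1$. Next, for a fixed such path, I would count the number of rooted labelled trees on $[n]$ (with root $r$) whose unique $r$-to-$v$ path is exactly $u_0,u_1,\ldots,u_{k-1}$. The key observation is that deleting the $k-1$ edges of this path from such a tree yields a labelled forest on $[n]$ with exactly $k$ components, the $i$'th component containing $u_{i-1}$, which we root at $u_{i-1}$. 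This operation is reversible (just re-insert the path edges), so the count of trees equals the number of labelled forests on $[n]$ with $k$ components rooted at the specified vertices $u_0,\ldots,u_{k-1}$.

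The number of such rooted forests is the classical quantity $k\cdot n^{n-k-1}$, which depends only on $n$ and $k$, not on the particular choice of rooted vertices; this is essentially a consequence of Cayley's formula and is easily verified by Pitman's double-counting argument or a Prüfer-type bijection. Assembling the pieces, the number of pairs $(T,v)$ with $d_T(r,v)=k-1$ equals
\[
(n-1)_{k-1} \cdot k \cdot n^{n-k-1},
\]
and dividing by $n^{n-1}$ gives $\p{d_T(r,v)=k-1}=k(n-1)_{k-1}/n^{k}$. Since $(n)_k = n\cdot (n-1)_{k-1}$, this equals $k(n)_k/n^{k+1}$, as claimed.

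The only non-trivial input is the forest-count identity $k\cdot n^{n-k-1}$. This is well known (it appears, for instance, in Pitman's work on enumerations of rooted forests) and requires no original argument here; so I do not expect any real obstacle. A sanity check at $k=1$ gives $\p{v=r}=1/n$, and summing the formula over $k=1,\ldots,n$ can be verified to yield $1$, corroborating the counting.
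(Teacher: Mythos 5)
Your proof is correct and is essentially the same as the paper's: both remove the $k-1$ edges of the $r$-to-$v$ path and invoke the forest-counting identity that the number of spanning forests of $[n]$ with $k$ trees rooted at $k$ prescribed vertices is $k\,n^{n-k-1}$. The only cosmetic difference is that you fix the root $r$ (denominator $n^{n-1}$) whereas the paper counts doubly-rooted trees (denominator $n^n$), which rescales numerator and denominator by the same factor of $n$.
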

\begin{pf}
We view $T$ as a doubly-rooted tree with roots $r$ and $v$.
By removing the edges on the path from $r$ to $v$, we obtain a forest
of $d_T(r,v)$ rooted trees, whose roots are ordered (as $r=v_1,\ldots
,v_k=v$, say).
The number of such forests is $({n \atop k}) k! \cdot(k n^{n-k-1})$;
see, for example,~\cite{moon70counting}, Theorem 3.2. The result
follows by dividing by the
total number of doubly-rooted trees on $n$ labeled vertices, which
is~$n^n$ by Cayley's formula.
\end{pf}

\begin{pf*}{Proof of Theorem \protect\ref{geom}}
As mentioned at the start of Section~\ref{fmp}, for all $i$, $n_{i+1}$
is joined to a uniformly random element of $T_{i}$---say $v_i$---and $d_{\IPC}(n_i,n_{i+1})=1+d_{T_i}(n_i,v_i)$.
By Corollary~\ref{forwardsizes} and Lemma~\ref{cayley}, it follows that
\begin{eqnarray*}
& &\mathbf{P}\{ d_{\IPC}(n_i,n_{i+1})=k \mid\HEIGHT_{n_{i+1}}=y \} \\
&&\qquad= \sum_{n=k}^{\infty} \mathbf{P}\{ d_{T_i}(n_i,v_i)=k-1 \mid|T_i|=n
\}\mathbf{P}\{ |T_i|=n \mid\HEIGHT_{n_{i+1}}=y \} \\
&&\qquad= \sum_{n=k}^{\infty} \frac{k (n)_{k}}{n^{k+1}} \cdot\frac
{\theta
(y)}{\theta'(y)} \frac{e^{-ny} (ny)^{n-1}}{(n-1)!} \\
&&\qquad= y^{k-1}\frac{\theta(y)}{\theta'(y)} \sum_{n=k}^{\infty} \frac{k}{n}
\cdot\frac{e^{-ny} (ny)^{n-k}}{(n-k)!} \\
&&\qquad= y^{k-1}\frac{\theta(y)}{\theta'(y)} \sum_{n=k}^{\infty}\frac{k}{n}
{\mathbf{P}}\{\operatorname{Poisson}(ny)=n-k\}.
\end{eqnarray*}
By the cycle lemma,
\[
\frac{k}{n}{\mathbf{P}}\{\operatorname{Poisson}(ny)=n-k\} = {\mathbf
{P}}\{
S_n^{0,y}=-k, S_i^{0,y} > -k\ \forall0 \le i < n\},
\]
so
\[
\mathbf{P}\{ d_{\IPC}(n_i,n_{i+1})=k\mid\HEIGHT_{n_{i+1}}=y \} =
y^{k-1}\frac
{\theta(y)}{\theta'(y)} {\mathbf{P}}\{S_n^{0,y}=-k\mbox{ for some
}n\}.
\]
But by the connection with queueing theory explained above,
${\mathbf{P}}\{S_n^{0,y}=-k\mbox{ for}\mbox{some }n\}$ is
precisely the
probability that
$k$ independent
$\pgw(y)$ all fail to survive, which is $(1-\theta(y))^k$. We complete
the proof by applying the identity~(\ref{theta}) from page \pageref{theta}.
\end{pf*}

\begin{pf*}{Proof of Theorem \protect\ref{offbackbone}}
For this theorem we revert to viewing\break $\IPC(\pff,\N)$ as a subtree of
the PWIT $\st$; our proof is based on the proof of Proposition 2.3
of~\cite{angel2006ipr}.
Given a node $v \in\st$, write $\st_v$ [resp. $\st_v(\lambda)$] for
the subtree of $\st$ rooted at $v$ (resp. rooted at $v$ and containing
all edges of weight at most $\lambda$ to descendants of $v$)---so $\st_v(\lambda) \eqdist\pgw(\lambda)$---and write $\lambda
^*(v) =
\inf\{\lambda\dvtx\st_v(\lambda)\mbox{ is
infinite}\}$.\vadjust{\goodbreak}

Let $r$ be the root of $\st$, and fix any node $v \in\st$. Fix $y > 1$
and integers $1 \leq j \leq k$. Let $E_{v,j,k,dy}$ be the event that $v$
is on the backbone, has $k$ children of whom the backbone passes
through the $j$th (in the left-to-right ordering of the PWIT)
and $\lambda^*(v) \in dy$. We split $E_{v,j,k,dy}$ into four events
depending on distinct edge sets of the PWIT:
\begin{enumerate}[$F_1$]
\item[$F_1$] $\st_r(y)-\st_v(y)$ is finite. (This event depends only on
edges of $\st-\st_v$.)
\item[$F_2$] $v$ has precisely $k$ children of weight at most $y+dy$---say $v_1,\ldots,v_k$. (This depends only on the weights of edges from
$v$ to its children.)
\item[$F_3$] For $i \in\{1,\ldots,k\}\setminus\{j\}$, $\st
_{v_i}(y+dy)$ is finite. (This depends only on the weights edges in the
subtrees $\st_{v_i}$ for $i \neq j$.)
\item[$F_4$] $\st_{v_j}(y)$ is finite, but $\st_{v_j}(y+dy)$ is
infinite. (This depends only on the weights of edges in $\st_{v_j}$.)
\end{enumerate}
Since the edge sets determining the events $F_1,\ldots,F_4$ are
disjoint, if $E$ occurs, then the conditioning on subtrees $\st
_{v_i}(y)$ for $i \in\{1,\ldots,k\}\setminus\{j\}$
is precisely that they are finite. Thus, given that $E$ occurs, for
each $i \in\{1,\ldots,k\}\setminus\{j\}$, $\st_{v_i}(y)$ is
distributed as $\pgw(m(y))$.

Now let $E_{v,dy}= \bigcup_{i,j} E_{v,i,j,dy}$---so $E_{v,dy}$ is the
event that $v$ is on the backbone and that $\lambda^*(v) \in dy$. Given
the observation at the end of the
previous paragraph, to prove the theorem it suffices
to show that as $dy \to0$, given $E_{v,dy}$, the number $N_v(y+dy)$ of
children of $v$ in $\st_v(y+dy)$ approaches $\operatorname{Poisson}(m(y))+1$
in distribution
[so that the number of children off the backbone approaches
$\operatorname
{Poisson}(m)$].
To see this is an easy calculation. First,
\begin{eqnarray*}
{\mathbf{P}}\{E_{v,dy}\} & = &{\mathbf{P}}\{F_1\}\cdot{\mathbf{P}}\{
\st_v(y)\mbox{ is finite but }\st_v(y+dy)\mbox{ is infinite}\} \\
& =& {\mathbf{P}}\{F_1\} \cdot\bigl(1+o(dy)\bigr) \theta'(y)\,dy.
\end{eqnarray*}
Next, fixing $k \geq1$, by symmetry,
\begin{eqnarray*}
&&{\mathbf{P}}\{N_v(y+dy)=k,E_{v,dy}\}\\
&&\qquad= \bigl(1+o(dy)\bigr) {\mathbf{P}}\{
F_1\} \cdot{\mathbf{P}}\{\operatorname{Poisson}(y)=k\} \cdot k \cdot
\bigl(1-\theta(y)\bigr)^{k-1} \cdot\theta'(y) \,dy.
\end{eqnarray*}
The factor $k$ above selects which of the $k$ children of $v$ is on the
backbone.
Since $m=y(1-\theta(y))$, it follows that
\begin{eqnarray*}
\lim_{dy \to0} \mathbf{P}\{ N_v(y+dy)=k\mid E_{v,dy} \}& =& {\mathbf
{P}}\{\operatorname{Poisson}(y)=k\} \cdot k \cdot\bigl(1-\theta(y)\bigr
)^{k-1}\\
& =&
{\mathbf{P}}\{\operatorname{Poisson}(m)=k-1\},
\end{eqnarray*}
which completes the proof.
\end{pf*}

\section{The stationary graph and box processes}\label{secstatproc}
Throughout this section, $\pff$ denotes a Poisson point process of
intensity $1$ in the upper half-plane, so $\pff$ is almost surely
exemplary and $\Z$-distinguished.

\subsection{Rooted subtrees in the box tree}
Recall that $\BOXES=\BOXES(\pff,\Z)$ is the tree with vertex set
$\Z$
defined in Section~\ref{secpwitpp}. For $n \in\Z$, we let $\BOXES^n$
denote the subtree of $\BOXES$ rooted at $n$, and write $|\mathrm{BG}^n|$
for its size (number of vertices).
By the definition of $\BOXES$, it is immediate that $|\mathrm{BG}
^n|=n-\LEFT
_n$. Our main aim in this section
is to prove the following theorem.
\begin{thmm}\label{boxesiic}
$\BOXES(\pff,\Z)$ is distributed as the Poisson IIC, in the local
weak sense.
\end{thmm}

A key step in proving Theorem~\ref{boxesiic}, one that additionally
introduces several of the main ideas, is the following theorem.
\begin{thmm}\label{boxespgw}
For all $n \in\Z$, conditional on $\LEFT^n(\pff,\Z)$ and $\HEIGHT
^n(\pff
,\Z)$, $\BOXES^n(\pff,\Z)$ is distributed as $\pgw(\HEIGHT^n)$
conditioned to have $n-\LEFT^n$ nodes. Furthermore, unconditionally
$\BOXES^n$ is distributed as $\pgw(1)$.
\end{thmm}
\begin{cor}[(\cite{mcdiarmid97mst}, Theorem 1)]
We have $\lfloor x(p_0) \rfloor\eqdist-\lfloor A V\rfloor$, where $A$
is Borel distributed, and $V$ is $\operatorname{Uniform}[0,1]$ and
independent of $A$.
\end{cor}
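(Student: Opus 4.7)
The idea is that the corollary splits into two independent pieces of information about the box $\BOX_0$: its size (which gives $A$) and the horizontal position of its top point $p_0$ within it (which gives $V$).

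First, I would apply Theorem \ref{boxespgw} in its unconditional form. Since $\BOXES^0(\pf,\Z) \eqdist \pgw(1)$ and $|\pgw(1)|$ has the Borel distribution (as recorded just after Lemma \ref{ipcpgw}), and since by the very definition of the box graph $|\BOXES^0| = 0 - \LEFT_0 = -\LEFT_0$, we conclude that $A := -\LEFT_0$ is Borel distributed. This identifies the first factor in the claim and reduces the problem to understanding the conditional distribution of $\lfloor x(p_0)\rfloor$ given $A$.

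Next, I would argue that conditional on $\LEFT_0 = -n$ and $\HEIGHT_0 = y$, the $x$-coordinate of $p_0$ is uniformly distributed on $[-n,0)$. The key input is Lemma \ref{lem:equivalent}: the event $\{\LEFT_0=-n,\HEIGHT_0=y\}$ (up to a Jacobian in $dy$) is the intersection of $E(-n,0,y,\pf)$, $F(-n,0,y,\pf)$ and $G(-n,y,\pf)$. The event $E$ places exactly one point of $\pf$ on the line $[-n,0)\times\{y\}$, and that point is $p_0$. The events $F$ and $G$ refer only to points of $\pf$ with height \emph{strictly} less than $y$ inside the box and to points of $\pf$ outside the strip $[-n,0)\times[0,y]$, respectively; in particular, neither constrains $x(p_0)$. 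By Poisson stationarity, conditional on the existence of a single point on the line $[-n,0)\times\{y\}$, its $x$-coordinate is uniform on $[-n,0)$. Integrating out $y$, we conclude that conditional on $A = n$ alone, $\lfloor x(p_0)\rfloor$ is uniform on $\{-n,\ldots,-1\}$.

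Putting the two ingredients together: conditional on $A$, the quantity $\lfloor x(p_0)\rfloor + A$ is uniform on $\{0,1,\ldots,A-1\}$, which is precisely the distribution of $\lfloor A V\rfloor$ for $V\sim\mathrm{Uniform}[0,1]$ independent of $A$. Rearranging (and, if needed, replacing $V$ by $1-V$), this is equivalent to $\lfloor x(p_0)\rfloor \eqdist -\lfloor AV\rfloor$ with $A,V$ as in the statement.

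The main obstacle I anticipate is not conceptual but technical: Lemma \ref{lem:equivalent} is stated for $\ssi$-reasonable point sets with $n_{\ssi}>-\infty$, whereas here we are working with $\ssi=\Z$ and $n_{\ssi}=-\infty$. One therefore needs to obtain the conditional uniformity via the same $m\to-\infty$ limiting procedure used in Section \ref{sec:pwitpp} to define $\IPC(\pf,\Z)$ and $\BOX_0(\pf,\Z)$, together with Lemma \ref{lem:consistent}, and check that the conditioning on the infinite family of events comprising $G$ at the limit does not disturb the uniform law of $x(p_0)$ on $[-n,0)$; this is straightforward since each finite-$m$ approximation preserves the horizontal symmetry, but it is the one place where care is required.
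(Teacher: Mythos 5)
Your proof is correct and takes essentially the same route as the paper: the Borel law of $A=-\LEFT_0$ comes from the second assertion of Theorem~\ref{boxespgw}, and the conditional uniformity of $x(p_0)$ on $[\LEFT_0,0)$ given $(\LEFT_0,\HEIGHT_0)$ comes from the Poisson structure of the point on the top of $\BOX_0$. The technical point you flag (extending Lemma~\ref{lem:equivalent} to $\ssi=\Z$) is exactly what the paper handles when it extends the box notation to $n_\ssi=-\infty$ and uses it in Lemma~\ref{heightlengthcond}, so your caveat is both apt and already absorbed by the surrounding development.
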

\begin{pf}
Given $\LEFT_0$ and $\HEIGHT_0$, the line segment $[\LEFT_0,0]\times
\{
\HEIGHT_0\}$ contains a single uniformly random point,
and this point is $p_0$. The second assertion of the theorem implies
that $\ell_0$ is Borel distributed, and the corollary follows.
\end{pf}

For the next several pages, we focus on developing the tools needed for
the proof of Theorem~\ref{boxespgw}.
By translation invariance, it suffices to prove Theorem~\ref{boxespgw}
with $n=0$.
We prove the theorem by way of the following analog of Lemma \ref
{ipcpgw} that holds for the box tree.
\begin{lem}\label{lemboxespgw}
Let $n$, $\lambda$, $P$ and $p$ be as in Lemma~\ref{ipcpgw}. Given that
$F(0,n,\lambda,P)$ occurs, $\BOXES(P\cup\{p\},\{0,\ldots,n\})$ is
distributed as $\PGW(\lambda)$ conditioned to have~$n$ nodes.
\end{lem}

We remark that for any $\lambda>0$, $\PGW(\lambda)$ conditioned to
have $k$ nodes and $\PGW(1)$ conditioned to have $k$ nodes are
identically distributed.
Thus, in proving Theorem~\ref{boxespgw} and Lemma~\ref{lemboxespgw} we
may and shall at times assume without loss of generality that $\lambda=1$.
Figure~\ref{manyboxes} contains an example of $\BOXES(P\cup\{p\},\{
0,\ldots,n\})$ for~$P$, $p$ as in Lemma~\ref{lemboxespgw}, with $n=256$.
(By the preceding comment, the value of~$\lambda$ is not important.)
In proving the lemma, it will be important to view $\pgw(1)$ both as an
ordered (plane) tree and as an unordered tree.
The ordered perspective is natural for $\pgw(1)$ when viewed as a
subtree of the PWIT.
Next, fix an unordered, rooted tree $U$. We will abuse notation by
writing $\pgw(1)=U$ if
$\pgw(1)=T$ for some ordered tree $T$ with underlying unordered tree $U$.
Fix one such tree $T$, and let $\operatorname{aut}(U)$ be the number of
rooted automorphisms of $T$.
[Note: by this we mean the number of distinct plane trees with\vadjust{\goodbreak}
underlying unrooted tree $U$; e.g., for the
tree $U$ in Figure~\ref{planetree}, interchanging the pair of leaves~$x$
and $y$ does not affect the
plane tree, and $\operatorname{aut}(U)=12$.] We then have ${\mathbf
{P}}\{\pgw(1)=U\}
= \operatorname{aut}(T)\cdot{\mathbf{P}}\{\pgw(1)=T\}$.
%

%
%
\begin{figure}

\includegraphics{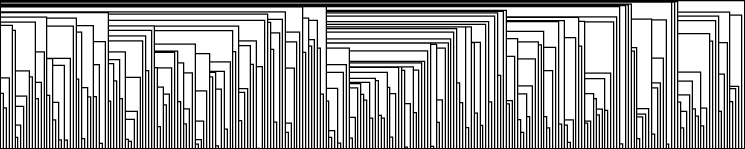}

\caption{The boxes for a random set of 256 points. The points
themselves are omitted. The code for generating this image was written
by Omer Angel.}
\label{manyboxes}
\end{figure}

%

\begin{figure}[b]

\includegraphics{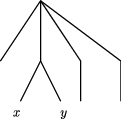}

\caption{Viewed as an unordered tree, the above tree has $
\mathrm{aut}(U)=12$.}
\label{planetree}
\end{figure}



We next turn our attention to $\BOXES^0$. There is again a natural
ordering of children in $\BOXES^0$---vertices are integers, and when
we refer to a box tree as an ordered tree,
we are referring to the ordering inherited from the integers.
However, unlike in $\pgw(1)$, we cannot expect the distributions of
distinct subtrees to be identical under this ordering.
Given an unordered tree $U$, we will also abuse notation by writing
${\mathbf{P}}\{\BOXES^0 = U\}$ if $\BOXES^0$ is unlabeled, rooted
isomorphic to $U$.

Our proof of Lemma~\ref{lemboxespgw} makes use of the following easy fact.
\begin{lem}\label{perm} Let $r\ge2$ and let $s_{1},\ldots, s_{r}$ be
natural numbers. Then
\[
\sum_{\pi} \prod_{j=2}^{r}\frac{s_{\pi(j)}}{\sum
_{i=1}^{j}s_{\pi(i)}}=1 ,
\]
where the summation is over all permutations $\pi$ of $\{1,\ldots,r\}$.
\end{lem}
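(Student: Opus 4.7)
The plan is to prove this identity by induction on $r$, grouping permutations according to the value of their last entry $\pi(r)$. Throughout, let $S=s_1+\cdots+s_r$.

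For the base case $r=2$, there are only two permutations, and they contribute $s_2/(s_1+s_2)$ and $s_1/(s_1+s_2)$, respectively; these sum to $1$.

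For the inductive step, assume $r\ge 3$ and that the identity holds for any $r-1$ natural numbers. Partition the permutations of $\{1,\dots,r\}$ according to $\pi(r)=k$ for $k\in\{1,\dots,r\}$. In any such permutation the factor at $j=r$ is $s_{\pi(r)}/\sum_{i=1}^r s_{\pi(i)}=s_k/S$, and this factor does not depend on the restriction of $\pi$ to positions $1,\dots,r-1$. The remaining factors, indexed by $j=2,\dots,r-1$, depend only on that restriction, which ranges over all permutations of the $r-1$ indices $\{1,\dots,r\}\setminus\{k\}$. Applying the inductive hypothesis to the numbers $\{s_i:i\ne k\}$, the sum of these restricted products equals $1$. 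Hence
\[
\sum_{\pi}\prod_{j=2}^{r}\frac{s_{\pi(j)}}{\sum_{i=1}^{j}s_{\pi(i)}}=\sum_{k=1}^{r}\frac{s_k}{S}\cdot 1 = 1,
\]
completing the induction.

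There is no real obstacle: the only care needed is in the bookkeeping, namely noting that the inductive hypothesis requires $r-1\ge 2$, so the base case $r=2$ must be handled separately. As a sanity check (and to motivate the formula) one can give a probabilistic interpretation: if $X_1,\dots,X_r$ are independent exponential random variables with rates $s_1,\dots,s_r$, then by the standard calculation that $\mathbf{P}(X_i=\min_j X_j)=s_i/\sum_j s_j$ combined with the memorylessness property, the probability that $X_{\pi(r)}<X_{\pi(r-1)}<\cdots<X_{\pi(1)}$ is exactly the product appearing in the lemma; summing over $\pi$ then yields $1$ trivially.
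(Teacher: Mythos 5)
Your proof is correct and follows essentially the same route as the paper's: induction on $r$ with base case $r=2$, partitioning permutations by the value of $\pi(r)$, factoring out the common $j=r$ term $s_k/S$, and applying the inductive hypothesis to the remaining $r-1$ indices. The probabilistic interpretation via competing exponential clocks is a nice sanity check not present in the paper, but the core argument is the same.
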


\begin{pf} We proceed by induction on $r$. For $r=2$, the sum is
over just two permutations, and the result is
\[
\frac{s_{1}}{s_{2}+s_{1}}+\frac{s_{2}}{s_{1}+s_{2}}=1
,
\]
as required. For general $r$, we partition the set of permutations $\pi
$ of $\{1,\ldots,r\}$ depending on the value of $\pi(r)$---for each
$k=1,\ldots,r$, let $\Pi_{k}$ be the set of permutations\vadjust{\goodbreak} $\pi$ of $\{
1,\ldots,r\}$ with $\pi(r)=k$. Since our aim is to prove that the sum
\[
\sum_{\pi} \prod_{j=2}^{r}\frac{s_{\pi(j)}}{\sum
_{i=1}^{j}s_{\pi(i)}} = \sum_{k=1}^{r}\sum_{\pi\in\Pi_{k}}\prod
_{j=2}^{r}\frac{s_{\pi(j)}}{\sum_{i=1}^{j}s_{\pi(i)}}
\]
has the value one, it suffices to prove that for each $k$ we have
\[
\sum_{\pi\in\Pi_{k}}\prod_{j=2}^{r}\frac{s_{\pi(j)}}{\sum
_{i=1}^{j}s_{\pi(i)}}=\frac{s_{k}}{\sum_{i=1}^{r}s_{i}} .
\]
Since the expression on the right-hand side here is the $j=r$ term of
the product for all $\pi\in\Pi_{k}$, it suffices to show that
\[
\sum_{\pi\in\Pi_{k}}\prod_{j=2}^{r-1}\frac{s_{\pi(j)}}{\sum
_{i=1}^{j}s_{\pi(i)}}=1 .
\]
By re-labeling if necessary, this may be deduced from the induction hypothesis.
\end{pf}

\begin{pf*}{Proof of Lemma \protect\ref{lemboxespgw}}
Fix an unordered tree $U$ with $n$ vertices and root $r$.
We will show that
%
%
\begin{equation}\label{boxewpgwtoprove}
n \cdot{\mathbf{P}}\{\BOXES(P,[n])=U\} = \mathbf{P}\{ \pgw(1)=U \mid
|\pgw(1)|=n \}.
\end{equation}
Proving this equality will prove the lemma, since $F(0,n,\lambda,P)$
must occur in order to have $\BOXES(P,[n])=U$, and ${\mathbf{P}}\{
F(0,n,\lambda,P)\}=1/n$ as noted just
after Lemma~\ref{statballot}. The case $n=1$ of~(\ref{boxewpgwtoprove})
is trivial, so suppose that $n>1$ and that the proposition holds for
all $n'$ with $1 \leq n' < n$.

Order the children of the root $r$ of $U$ arbitrarily, and suppose that
the subtrees
$U_1,\ldots,U_k$ of $U$ rooted at the children of the root have sizes
$n_1,\ldots,n_k$
with respect to this order. Let $\operatorname{aut}(r)$ be the number of
permutations of the children of $r$ which induce automorphisms of $U$.
[For example, the tree in Figure~\ref{planetree} has
$\operatorname{aut}(r)=2$.]

We note that
%
\begin{eqnarray}\label{pgwcalc}
&& \mathbf{P}\{ \pgw(1)=U \mid|\pgw(1)|=n \}\nonumber\\
&&\qquad= \frac{{\mathbf{P}}\{\pgw(1)=U\}}{{\mathbf{P}}\{|\pgw{1}|=n\}}
\nonumber\\
&&\qquad= \frac{n!}{n^{n-1}e^{-n}} \cdot\frac{e^{-1}}{k!}\frac
{k!}{\operatorname
{aut}(r)} \prod_{i=1}^k {\mathbf{P}}\{\pgw(1)=U_i\} \\
&&\qquad= \frac{1}{\operatorname{aut}(r)}\frac{n!}{n^{n-1}}
\prod_{i=1}^k \frac{n_i^{n_i-1}}{n_i!}\mathbf{P}\{ \pgw(1)=U_i \mid
|\pgw(1)=n_i \}
\nonumber\\
&&\qquad= \frac{n}{\operatorname{aut}(r)}\pmatrix{n-1 \vspace*{2pt}\cr
n_1,\ldots,n_k}
\prod_{i=1}^k\biggl (\frac{n_i}{n}\biggr)^{n_i}\frac{1}{n_i}\mathbf{P}\{ \pgw
(1)=U_i \mid|\pgw(1)=n_i \}.\nonumber
\end{eqnarray}
Next, assume the points of $P$ are listed in increasing order of height
as $\{p_1,\ldots, p_n\}$.
We first consider the sizes of the subtrees of $\BOX_0$.

Let $m_0=0$ and for $i=1,\ldots,k$, let $m_i=m_{i-1}+n_i$ (so in
particular $m_k=n-1$).
Also for $i=1,\ldots,k$, let $Q_i$ be the set of points $p \in
P\setminus\{p_n\}$ satisfying
$m_{i-1} \leq x(p) < m_i$, and let $p^i$ be the point of $Q_i$ with
greatest $y$-coordinate.

We recall the definitions of the events $E,F$ and $G$ from Lemma \ref
{lemequivalent}.
In order for $n$ to have children with subtrees $U_1,\ldots,U_k$ in
that order,
it is necessary and sufficient that the following events occur:

\begin{longlist}[(III)]
\item[(I)] $E(m_{i-1},m_{i},y(p^i),P)$ occurs for each $i\in\{
1,\ldots
,k\}$;
\item[(II)] We have $y(p^i) > y(p^2) > \cdots> y(p^k)$;
\item[(III)] $F(m_{i-1},m_{i},y(p^i),P)$ and $G(m_{i-1},y(p^i),P)$
occur for each $i=\{1,\ldots, k\}$;
\item[(IV)] $\BOXES^{m_i}=U_i$ for each $i=\{1,\ldots,k\}$.
\end{longlist}
First, (I) is equivalent to the requirement that $|Q_i|=n_i$ for each
$i=\{1,\ldots,k\}$.
The $x$-coordinates of points in $P$, are uniformly distributed on $[-n,0]$
so
%
%
\begin{equation}\label{event1}
{\mathbf{P}}\{\mbox{(I)}\} = \pmatrix{n-1 \vspace*{2pt}\cr n_1,\ldots
,n_k}\prod
_{i=1}^k \biggl(\frac{n_i}{n}\biggr)^{n_i}.
\end{equation}
Given (I), for (II) to occur it suffices that for each $i=\{1,\ldots
,k\}$,
the point of $(P\setminus\{p_n\})\setminus(Q_1\cup\cdots\cup Q_{i-1})$
with the largest $y$-coordinate, is
a member of $Q_{i}$. Thus,
%
%
\begin{equation}\label{event2}
\mathbf{P}\{ \mbox{(II)} \mid \mbox{(I)} \} = \prod_{i=1}^{k} \frac
{n_i}{n-1-\sum
_{j=1}^{i-1}n_j}.
\end{equation}
Since $y(p^i)<y(p^j)$ for $j < i$, if (I), (II) and $\bigcap
_{i=1}^{k-1}F(m_{i-1},m_{i},y(p^i))$ all hold for some $k \geq1$, then
it is immediate that
$G(m_{i-1},y(p^i))$ occurs for each $i=\{1,\ldots,k\}$.
Thus,
\[
\mathbf{P}\{ \mbox{(III)} \mid \mbox{(I)},\mbox{(II)} \} = \mathbf
{P}\Biggl\{ \bigcap_{i=1}^k F(m_{i-1},m_{i},y^*(i)) \Bigm| \mbox
{(I)},\mbox
{(II)} \Biggr\}.
\]
Furthermore, given (I) and (II), independently for each $i=\{1,\ldots
,k\}$, the points of $Q_i\setminus\{p^i\}$ are independently and
uniformly distributed in $[m_{i-1},m_i)\times[0,y(p^i))$.
Thus, by the cycle lemma,
\[
\mathbf{P}\{ \mbox{(III)} \mid \mbox{(I)},\mbox{(II)} \} = \prod
_{i=1}^k \mathbf{P}\{ F(m_{i-1},m_{i},y(p^i)) \mid\mbox{(I)},\mbox
{(II)} \} \nonumber
= \prod_{i=1}^k \frac{1}{n_i}.\label{event3}
\]
Finally, given (I), (II) and (III) and independently for each $i=\{
1,\ldots,k\}$, $Q_i\setminus\{p^i\}$ is precisely a uniform
set of $n_i-1$ points, conditioned on $F(m_{i-1},m_i,\break y(p^i),Q_i)$
holding, and $p^i$ is a uniform point on $[m_i-n_i,m_i]\times\{y(p^i)\}$.
Furthermore, by Lemma~\ref{lemrestrict}, given\vadjust{\goodbreak}
$E(m_{i-1},m_{i},y(p^i))$, $F(m_{i-1},m_{i},y(p^i))$ and $G(m_{i-1},y(p^i))$,
we have
\[
\BOXES^{m_i}(P,\{0,\ldots,n\})= \BOXES(Q_i,\{m_{i-1},\ldots,m_i\}).
\]
Thus, by the induction hypothesis,
%
\begin{eqnarray}\label{event4}
&& \mathbf{P}\{ \mbox{(IV)} \mid\mbox{(I)},\mbox{(II)},\mbox{(III)}
\} \nonumber\\
&&\qquad= \prod_{i=1}^k \mathbf{P}\{ \BOXES^{m_i}(P,\{0,\ldots,n\} )=U_i
\mid E(m_{i-1},m_{i},y(p^i)),\nonumber\\
&&\qquad\hspace*{86pt}F(m_{i-1},m_{i},y(p^i)),G(m_{i-1},y(p^i)) \}
\\
&&\qquad= \prod_{i=1}^k \mathbf{P}\bigl\{ \BOXES(Q_i,\{m_{i-1},\ldots,m_i\}
)=U_i \mid F(m_{i-1},m_{i},y(p^i)) \bigr\} \nonumber\\
&&\qquad= \prod_{i=1}^k \mathbf{P}\{ \pgw(1)=U_i \mid|\pgw(1)|=n_i
\}.\nonumber
\end{eqnarray}
Combining~(\ref{event1})--(\ref{event4}), and rearranging, we obtain that
\begin{eqnarray*}
&& {\mathbf{P}}\bigl\{n\mbox{ has children }U_1,\ldots,U_k\mbox{ in that
order in }\BOXES(P\cup p,\{0,\ldots,n\}\bigr\} \\
&&\qquad=
\pmatrix{n-1 \vspace*{2pt}\cr n_1,\ldots,n_k}\prod_{i=1}^{k} \frac
{n_i}{n-1-\sum
_{j=1}^{i-1}n_j}\\
&&\hspace*{72pt}\qquad{}\times\prod_{i=1}^k \biggl(\frac{n_i}{n}\biggr
)^{n_i} \frac
{1}{n_i}\mathbf{P}\{ \pgw(1)=U_i \mid|\pgw(1)|=n_i \}, \\
&&\qquad= \frac{\operatorname{aut}(r)}{n}\prod_{i=1}^{k} \frac
{n_i}{n-1-\sum
_{j=1}^{i-1}n_j} \mathbf{P}\{ \pgw(1)=U \mid|\pgw(1)|=n \},
\end{eqnarray*}
the latter equality holding due to~(\ref{pgwcalc}).
To obtain ${\mathbf{P}}\{\BOXES(P\cup p,\{0,\ldots,n\})=U\}$, we now
must sum this
bound over distinct orderings of $U_1,\ldots,U_k$. We instead sum over
all permutations $\pi\dvtx[k]\to[k]$, and note that this counts each
distinct ordering
$\operatorname{aut}(r)$ times. We thus obtain
\begin{eqnarray*}
&& {\mathbf{P}}\bigl\{\BOXES(P\cup p,\{0,\ldots,n\})=U\bigr\} \\
&&\qquad= \frac{1}{n}\mathbf{P}\{ \pgw(1)=U \mid|\pgw(1)|=n \}\cdot\sum
_{\pi\dvtx[k]\to[k]}
\prod_{i=1}^{k} \frac{n_{\pi(i)}}{n-1-\sum_{j=1}^{i-1}n_{\pi(j)}}.
\end{eqnarray*}
By Lemma~\ref{perm}, the above sum is 1, which establishes (\ref
{boxewpgwtoprove}) by induction and so completes the proof.
\end{pf*}

In proving Theorem~\ref{boxespgw}, we will use the following identity,
which we quote in advance.
\begin{lem} \label{integrate} For integers $a \geq0$, $b > 0$, let
$I_{a,b}:= \int_{0}^{1}x^{a}e^{-bx} \,dx$. Then
$I_{b-1,b}-I_{b,b}=e^{-b}/b$ for each $b> 0$.
\end{lem}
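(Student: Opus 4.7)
The plan is to recognize the integrand $x^{b-1}e^{-bx} - x^{b}e^{-bx}$ as an exact derivative, which immediately reduces the identity to a boundary evaluation.

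Specifically, I would compute
\[
\frac{d}{dx}\pran{\frac{x^b e^{-bx}}{b}} = \frac{b x^{b-1} e^{-bx} - b x^b e^{-bx}}{b} = x^{b-1}e^{-bx} - x^{b}e^{-bx},
\]
so that the integrand $(x^{b-1}-x^{b})e^{-bx}$ is precisely $\frac{d}{dx}\pran{x^b e^{-bx}/b}$. Applying the fundamental theorem of calculus then gives
\[
I_{b-1,b} - I_{b,b} = \int_0^1 \pran{x^{b-1}-x^{b}} e^{-bx}\,dx = \left[\frac{x^b e^{-bx}}{b}\right]_0^1 = \frac{e^{-b}}{b},
\]
since the boundary term at $x=0$ vanishes (as $b$ is a positive integer, so $x^b \to 0$).

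There is no real obstacle here: the only step requiring any thought is spotting the product-rule structure, and once it is seen, the rest is a one-line computation. An alternative approach would be to integrate $I_{b,b}$ by parts with $u=x^b$ and $dv = e^{-bx}dx$, yielding $I_{b,b} = -\frac{e^{-b}}{b} + I_{b-1,b}$ after cancellation of the $1/b$ factors, but the antiderivative observation above is the most compact presentation.
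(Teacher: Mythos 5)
Your proof is correct, and it is essentially the paper's approach: the paper simply says ``Integration by parts,'' and your antiderivative observation is the same computation packaged slightly differently, as you yourself note in the final paragraph.
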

\begin{pf}
Integration by parts.
\end{pf}

The final step before proving Theorem~\ref{boxespgw} is to derive the
conditional distribution of $\LEFT_0=|\mathrm{BG}^0|$ given $\HEIGHT_0$.
As this will be useful later in the paper, we state it as a separate lemma.
Write
\[
\varphi_y(n) = \lim_{dy \to0} \frac{{\mathbf{P}}\{|\LEFT
_0|=n,\HEIGHT_0 \in[y,y+dy)\}}{dy}.
\]
\begin{lem}\label{heightlengthcond}
For all $0 < y < 1$ and all $n \geq1$,
%
%
\begin{equation}\label{heightlength}
\varphi_y(n) = (1-y)\cdot\frac{e^{-ny}(ny)^{n-1}}{(n-1)!}.
\end{equation}
\end{lem}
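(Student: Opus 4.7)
The plan is to apply Lemma~\ref{lem:equivalent}, which characterizes the event $\{\LEFT_0 = -n,\, \HEIGHT_0 \in [y, y+dy)\}$ via three conditions $E = E(-n, 0, y, \pf)$, $F = F(-n, 0, y, \pf)$, and $G = G(-n, y, \pf)$ on pairwise disjoint subregions of the upper half-plane, and then estimate each factor as $dy \to 0$ using Poisson counts, the cycle lemma, and the stationary ballot theorem.

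First, the density version of $E$ requires $n-1$ points of $\pf$ in $[-n, 0) \times [0, y)$ together with a single point in $[-n, 0) \times [y, y+dy)$. The former has probability $e^{-ny}(ny)^{n-1}/(n-1)!$, after which those $n-1$ points are i.i.d.\ uniform in $[-n, 0) \times [0, y)$; independently, the latter contributes a factor $n\, dy + o(dy)$.

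Second, condition $F$ depends only on the $n-1$ uniform points. Letting $X_m$ denote the number of these falling in the column $[-m, -(m-1)) \times [0, y)$, the condition $F$ is equivalent to $\sum_{m'=1}^{m} (1 - X_{m'}) > 0$ for all $1 \le m \le n$; the total of all $n$ such steps equals $n - (n-1) = 1$. By the cycle lemma applied to the exchangeable steps $1 - X_m$ (each bounded above by $1$), this conditional probability is exactly $1/n$.

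Third, $G$ is the event that the leftward walk $L^{-n,y}$ has no chance, in the language of Section~\ref{sec:rw}. Since the restriction of $\pf$ to $(-\infty,-n) \times [0, \infty)$ is independent of its restriction to $[-n, 0) \times [0, y+dy)$, this factor is independent of the first two. By the queueing interpretation of Section~\ref{queueing} together with Lemma~\ref{statballot}, for $y \le 1$ the walk $L^{-n, y}$ has a chance with probability exactly $y$, so $\p{G} = 1 - y$. Multiplying the three contributions, dividing by $dy$, and letting $dy \to 0$ yields the stated density. The only real subtlety is confirming that $E$, $F$, and $G$ genuinely concern disjoint regions so that Poissonian independence licenses the factoring; once that is in hand, the computation is a short combination of the cycle lemma with the survival probability of the leftward queue.
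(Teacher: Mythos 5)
Your proof follows the paper's argument essentially verbatim: decompose via Lemma~\ref{lem:equivalent} into $E$, $F$, $G$; get $f_E(y)\,dy \sim n\,dy\cdot e^{-ny}(ny)^{n-1}/(n-1)!$ for the Poisson count; get $\probC{F}{E}=1/n$ by the cycle lemma; get $\p{G}=1-y$ via the queueing/ballot interpretation; and multiply. One small imprecision in the framing: $E$ and $F$ do not live on disjoint regions (both concern $[-n,0)\times[0,y)$), so the factoring is not pure Poissonian independence across all three; what you actually (correctly) use is $\probC{F,G}{E}=\probC{F}{E}\,\p{G}$, where only $G$ sits on a region disjoint from the other two, while $\probC{F}{E}$ is handled by the cycle lemma. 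Since your computation does exactly this, the result stands; just the ``pairwise disjoint'' description is slightly loose.
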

\begin{pf}
Fix $n\in\N$ and $0 < y < 1$.
In order to have $\ell_0 = -n$ and $\HEIGHT_0=y$, it is necessary and
sufficient that
$E=E(-n,0,y,P),F=F(-n,0,y,P)$ and $G=G(-n,y,P)$, from Lemma \ref
{lemequivalent} all occur.
We first calculate the density of the event $E$.
\begin{eqnarray*}
& &{\mathbf{P}}\{|[-n,0)\times[0,y)|_{\pff}=n-1,|[-n,0)\times
[y,y+dy)|_{\pff}=1\} \\
&&\qquad= \bigl(1+o(dy)\bigr) {\mathbf{P}}\{\operatorname
{Poisson}(ny)=n-1\}\cdot n \,dy \\
&&\qquad= \bigl(1+o(dy)\bigr) \frac{e^{-ny}(ny)^{n-1}}{(n-1)!} \cdot n
\,dy.
\end{eqnarray*}
Now let $f_E(y) = f_E(0,n,y) = \frac{e^{-ny}(ny)^{n-1}}{(n-1)!} \cdot n$.
Given that\vspace*{1pt} $|[-n,0)\times[0,y)|_{\pff}=n-1$ occurs, $\pff
([-n,0]\times
[0,y))$ consists of $n-1$ uniformly random points.
Independently of this, given that $|[-n,0)\times\{y\}|_{\pff}=1$,
the line segment $[-n,0]\times\{y\}$ contains a single uniformly
random point.
By the first of the two preceding observations and by the cycle lemma,
it follows that $\mathbf{P}\{ F \mid E \}=\frac{1}{n}$.
Furthermore, $G$ is independent of $E$, and so by Lemma \ref
{statballot}, ${\mathbf{P}}\{G\}=1-y$.
We thus have
\[
\varphi_y(n) =\mathbf{P}\{ F,G \mid E \} f_E(y) = {\mathbf{P}}\{G\}
\mathbf{P}\{ F \mid E \} f_E(y),
\]
from which the lemma follows.
\end{pf}
\begin{pf*}{Proof of Theorem \protect\ref{boxespgw}}
We assume without loss of generality that $n=0$.
Let $P = ([\LEFT_0,0]\times[0,\HEIGHT_0])\cap\pff$, and let $p =
([\LEFT
_0,0] \times\{\HEIGHT_0\}) \cap\pff$.
Then $P$ is precisely distributed as a set of $|\LEFT_0|-1$ uniform
points in $([\LEFT_0,0]\times[0,\HEIGHT_0])$,
conditional on $F(\LEFT_0,0,\HEIGHT_0)$, and $p$ has uniform
distribution on $([\LEFT_0,0] \times\{\HEIGHT_0\})$.
The\vadjust{\goodbreak} first claim of the theorem then follows from Lemma~\ref{lemboxespgw}.
Next, by Lemma~\ref{heightlengthcond}, for any positive integer $m$ we have
%
\begin{eqnarray}\label{length}
{\mathbf{P}}\{|\LEFT_0|=m\} & =& \int_{0}^{1} \varphi_y(m) \,dy
\nonumber
\\[-8pt]
\\[-8pt]
\nonumber
& =& \frac{m^{m-1}}{(m-1)!}( I_{m-1,m}-I_{m,m}),
\end{eqnarray}
where the notation $I_{a,b}$ is that defined in Lemma~\ref{integrate}.
Applying that lemma, we obtain that ${\mathbf{P}}\{|\mathrm{BG}^0|=m\}$ is
$m^{m-1}e^{-m}/m!$ [exactly the probability that a PGW(1) has size $m$].
The second claim of the theorem then follows from the first and the
fact that the conditional distribution of $\pgw(\lambda)$ given its
size, is independent of $\lambda$.
\end{pf*}

Before proving Theorem~\ref{boxesiic}, we first state a consequence of
the above development.
\begin{cor}
For any positive integer $n$, any $0 < y < 1$, and any unordered rooted
tree $U$ with $|U|=n$,
\[
\lim_{dy \to0} \frac{{\mathbf{P}}\{\BOXES^0=U, \HEIGHT_0 \in dy\}
}{dy} = (1-y) n {\mathbf{P}}\{\pgw(y)=U\}.
\]
\end{cor}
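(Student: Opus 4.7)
The plan is to decompose the joint event $\{\BOXES^0 = U, \HEIGHT_0 \in dy\}$ using the conditional structure of $\BOXES^0$ given $(\LEFT_0, \HEIGHT_0)$, and then apply the two main tools already proved: Lemma~\ref{heightlengthcond} for the joint density of $(|\LEFT_0|, \HEIGHT_0)$, and Theorem~\ref{boxespgw} for the conditional distribution of $\BOXES^0$.

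First I would observe that if $\BOXES^0 = U$ with $|U| = n$, then deterministically $|\LEFT_0| = |\BOXES^0| = n$, since by definition of the box graph $|\BOXES^0| = 0 - \LEFT_0$. Writing
\[
\p{\BOXES^0 = U,\, \HEIGHT_0 \in dy}
= \probC{\BOXES^0 = U}{|\LEFT_0| = n,\, \HEIGHT_0 \in dy}\cdot \p{|\LEFT_0| = n,\, \HEIGHT_0 \in dy},
\]
the second factor is $\varphi_y(n)\,dy = (1-y)\,\tfrac{e^{-ny}(ny)^{n-1}}{(n-1)!}\,dy$ (up to $1+o(1)$) by Lemma~\ref{heightlengthcond}. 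For the first factor, Theorem~\ref{boxespgw} states that given $\HEIGHT_0 = y$ and $|\LEFT_0| = n$, $\BOXES^0$ is distributed as $\pgw(y)$ conditioned to have $n$ nodes, so this factor equals $\probC{\pgw(y) = U}{|\pgw(y)| = n}$.

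Next I would use the explicit Borel-type formula recalled in the paper, namely $\p{|\pgw(y)| = n} = \tfrac{e^{-yn}(yn)^{n-1}}{n!}$, to write
\[
\probC{\pgw(y) = U}{|\pgw(y)| = n}
= \frac{\p{\pgw(y) = U}}{\p{|\pgw(y)| = n}}
= \p{\pgw(y) = U}\cdot \frac{n!}{e^{-yn}(yn)^{n-1}}.
\]
Combining everything and cancelling the $\tfrac{e^{-ny}(ny)^{n-1}}{(n-1)!}$ terms gives
\[
\frac{\p{\BOXES^0 = U,\, \HEIGHT_0 \in dy}}{dy}
= \p{\pgw(y) = U}\cdot(1-y)\cdot \frac{n!}{(n-1)!}\cdot(1+o(1))
= (1-y)\,n\,\p{\pgw(y) = U}\cdot (1+o(1)),
\]
and the desired limit follows on letting $dy \to 0$.

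There is essentially no obstacle here: this statement is a bookkeeping consequence of the two preceding results. The only thing worth checking is the compatibility of conditioning — that the unordered isomorphism class $U$ is measurable with respect to the conditional distribution described in Theorem~\ref{boxespgw}, which is immediate since the theorem describes the full conditional law of $\BOXES^0$ as a rooted tree. No further computation is required.
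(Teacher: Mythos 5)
Your proposal is correct and is exactly the computation the paper leaves implicit in its one-line proof (``Immediate from (\ref{heightlength}) and Theorem~\ref{boxespgw}''): you condition on $(|\LEFT_0|,\HEIGHT_0)$, apply Lemma~\ref{heightlengthcond} for the joint density and Theorem~\ref{boxespgw} for the conditional law, then cancel via the Borel formula $\p{|\pgw(y)|=n}=e^{-yn}(yn)^{n-1}/n!$ to obtain $(1-y)\,n\,\p{\pgw(y)=U}$. Same route, just spelled out.
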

\begin{pf}
Immediate from~(\ref{heightlength}) and Theorem~\ref{boxespgw}.
\end{pf}

\begin{pf*}{Proof of Theorem \protect\ref{boxesiic}}
We will in fact prove that for any unordered rooted tree $U'$,
conditional on $\BOXES^0=U'$,
$\BOXES^{a_0}\setminus\BOXES^0$ is distributed as $\pgw(1)$, which
implies the statement of the theorem.
Thus, let $U$ and $U'$ be unordered rooted trees with roots $r$ and $r'$,
and let $U^*$ be the unordered rooted tree with root $r$ obtained by
adding an edge between $r$ and $r'$.
We define
\begin{eqnarray*}
E & =& \{\BOXES^0=U',\BOXES^{a_0}\setminus\BOXES^0=U\} \\
& = &\{\BOXES^0=U',\BOXES^{a_0}=U^*\}.
\end{eqnarray*}
Next let $k=k(U^*)$ be the number of children of $r$ in $U^*$, and let
$j=j(U^*,U') \geq1$ be the number of children of $r$ in $U^*$ whose
subtree is isomorphic to $U'$.
Also, let $\operatorname{aut}(r)$ [resp. $\operatorname{aut}^*(r)]$ be
the number
of permutations of the children of $r$ in $U$ (resp. $U^*$) which
induce automorphims of $U$ (resp.~$U^*$).
Note that $\operatorname{aut}^*(r)=j\cdot\operatorname{aut}(r)$.
%
\begin{figure}

\includegraphics{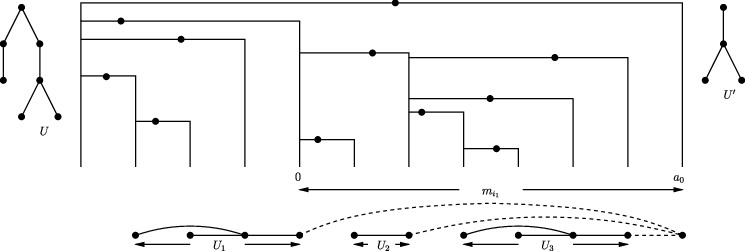}

\caption{In the above figure, an ordering ${\mathbf u}=(U_1,U_2,U_3)$
of the children of $U^*$ is fixed. Dashed edges lead from $a_0$ to its
three children.
In this example, $j(U^*,U')=2$ since $U_1$ and $U_3$ are (unordered,
rooted) isomorphic to $U'$ but $U_2$ is not, and $i_1({\mathbf u})=1$,
$i_2({\mathbf u})=3$ for the same reason. Finally,
this example relates to the event $E_{{\mathbf u},p}$ with $p=1$,
since $a_0=m_{i_1}({\mathbf u})$.}
\label{thm30}
\end{figure}


%

Given an ordering $ {\mathbf u} =(U_1,\ldots,U_k)$ of the children of
$r$ in $U^*$, let $i_1=i_1(\mathbf{u}),\ldots,i_j=i_j(\mathbf{u})$ be
the indices $i$ for which $U_{i}$ is isomorphic to $U'$.
For each $\ell\in\{0,\ldots,k\}$, let $m_{\ell}=m_\ell(\mathbf
{u})=1+\sum_{q=\ell+1}^k |U_q|$ (which is $1$ when $\ell=k$). Let
\[
E_{ {\mathbf u}} = E \cap\{ a_0 \mbox{ has children } U_1,\ldots,U_k
\mbox{ in that order}\},
\]
and for each $p\in\{1,\ldots,j\}$, let
\[
E_{ {\mathbf u} , p } = E_{\mathbf u} \cap\{a_0=m_{i_p}(\mathbf{u})\}
= \{ m_{i_p}(\mathbf{u}) \mbox{ has children } U_1,\ldots,U_k \mbox{
in that order}\}.\vadjust{\goodbreak}
\]
(The above definitions are depicted in Figure~\ref{thm30}.)
Then $E=\bigcup_{\mathbf u} \bigcup_{p=1}^{j} E_{ {\mathbf u} , p }$,
where the first union is over all distinct orderings
$\mathbf{u}$ [we say $\mathbf{u}=(U_1,\ldots,U_k)$ and $\mathbf
{u}'=(U_1',\ldots,U_k')$ are distinct if there
is some $i \in\{1,\ldots,k\}$ for which $U_i$ and $U_i'$ are not
isomorphic]. The double union is then over disjoint terms, and so
\[
{\mathbf{P}}\{E\} = \sum_{\mathbf u} \sum_{p=1}^j {\mathbf{P}}\{
E_{\mathbf{u},p }\}.
\]
Reversing the order of summation above, by translation invariance and
Theorem~\ref{boxespgw}, we obtain
\begin{eqnarray*}
{\mathbf{P}}\{E\} & =& \sum_{p=1}^j \sum_{\mathbf u} {\mathbf{P}}\{
E_{\mathbf{u},p}\} \\
& = &j \cdot{\mathbf{P}}\{\BOXES^0= U^*\} \\
& =& j \cdot{\mathbf{P}}\{\pgw(1)=U^*\}.
\end{eqnarray*}
Now fix some ordering $U_1,\ldots,U_k$ of the children of $r$ in $U^*$
with $U_k = U'$. Then by the definition of $\pgw(1)$ and the preceding
equality, we have
\begin{eqnarray*}
{\mathbf{P}}\{E\} & = &j \cdot\frac{e^{-1}}{k!} \frac{k!}{\operatorname
{aut}^*(r)} \prod
_{i=1}^k {\mathbf{P}}\{\pgw(1)=U_i\} \\
& =& {\mathbf{P}}\{\pgw(1)=U'\} \cdot\frac{e^{-1}}{(k-1)!}\frac
{(k-1)!}{\operatorname
{aut}(r)} \prod_{i=1}^{k-1} {\mathbf{P}}\{\pgw(1)=U_i\} \\
& =& {\mathbf{P}}\{\pgw(1)=U'\}{\mathbf{P}}\{\pgw(1)=U\}.
\end{eqnarray*}
It follows by Theorem~\ref{boxespgw} that
\begin{eqnarray*}
\mathbf{P}\{ \BOXES^{a_0}\setminus\BOXES^0=U | \BOXES^0=U' \} & =&
\frac{{\mathbf{P}}\{E\}}{{\mathbf{P}}\{\BOXES^0=U'\}} \\
& = &{\mathbf{P}}\{\pgw(1)=U\},
\end{eqnarray*}
proving the theorem.
\end{pf*}

\subsection{An ancestral process in $\IPC(\pff,\Z)$}

By the end of this section we will have proved Theorems~\ref{main1}--\ref{main3} from the \hyperref[secintro]{Introduction}.
To warm up, we prove the following theorem.\vadjust{\goodbreak}
\begin{thmm}\label{pgw1}
The subtree of $\IPC(\pff,\Z)$ rooted at zero and containing only nodes
with positive label, is distributed as $\pgw(1)$.
\end{thmm}
\begin{pf}
By Proposition~\ref{propunder1}, each point of $\pff$ in $[0,1]\times
[0,1]$ yields a~child of $0$ in $T_0$, so $0$ has $\operatorname{Poisson}(1)$
children.
Let $r_0=0$, and let $r_1 = \min(i > 0 \dvtx\lfloor x(p_i)\rfloor
=0)$, so
$r_1$ is ``the first integer with $0$ as a parent.'' For any $i >0$,
the event that $r_1=i$ is independent of
$\pff\cap( [i,\infty)\times[0,\infty) )$, so $r_1$ also has
$\operatorname
{Poisson}(1)$ children. More generally, let $r_{k}=\min(i >
r_{k-1} \dvtx\lfloor x(p_i)\rfloor\in\{r_0,\ldots,r_{k-1}\})$. Then
for all $k\geq1$ and all $i>0$, the event that $r_k=i$ is independent
of $\pff\cap( [i,\infty)\times[0,\infty) )$, so $r_k$ has $\operatorname
{Poisson}(1)$ children.
The nodes $r_0,r_1,r_2,\ldots$ are precisely the descendants of $0$,
and we have just seen that each has $\operatorname{Poisson}(1)$ children
independently of all the others. This proves the theorem.
\end{pf}

Heuristically, the fact that $\IPC(\pff,\Z)$ is equal in
distribution to
the IIC can be seen as follows. By symmetry, from Theorem~\ref{pgw1},
at each node of $\IPC(\pff,\Z)$ is rooted
a copy of $\pgw(1)$.
Also, by exploring the nodes of multiple trees in a left-to-right
fashion as in Theorem~\ref{pgw1}, we see that the offspring
distribution for distinct branches of $\IPC(\pff,\Z)$ are independent.
Furthermore, the parent of $0$ in $\IPC(\pff,\Z)$ is more likely to
be a
node with many children than one with few children. This should
``size-bias'' the number of children
of the parent of zero, in such a way as to precisely compensate for the
edge from $0$ to its parent, so that a $\operatorname{Poisson}(1)$
number of
children remain. The same argument should also hold for the parent of the
parent of zero, and so on ad infinitum. It is possible to make (parts
of) this heuristic argument rigorous; however, we
obtain the result as a relatively direct byproduct of our argument for
Theorem~\ref{main3}, whose proof requires a different approach.

The key to the proof is the definition of a ``backward maximum
process'' which is extremely similar to the forward maximal process.
We begin by listing $0$ and its ancestors in $\IPC(\pff,\Z)$ in
decreasing order as $n_0,n_1,n_2$, et cetera, so in particular $n_0=0$
and in general $n_{i+1}=\lfloor x(p_{n_i}) \rfloor$.
For $i \geq0$ let $w_i = \HEIGHT_{n_i}$, and let\vadjust{\goodbreak} $m_i = \max_{0 \leq j
\leq i} w_j$, the greatest weight of any of the first $(i+1)$ edges. In
particular, $m_0=w_0=\HEIGHT_0$.
Finally, let $i_0=0$ and, for $k \geq1$, let $i_k$ be the smallest
integer $i > i_{k-1}$ for which $m_{i_k} > m_{i_{k-1}}$.
Then the following lemma is basic (but important).
\begin{lem}\label{leftheightsame}
For all $k \geq1$, $n_{i_k} = \LEFT(n_{i_{k-1}})$, and so
$m_{i_k}=\HEIGHT_{n_{i_k}}$.
\end{lem}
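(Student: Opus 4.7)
I would proceed by induction on $k$. The inductive step reduces to the base case by replacing $n_0$ with $n_{i_{k-1}}$ and $m_0$ with $m_{i_{k-1}}$, after noting that $m_{i_{k-1}} = w_{i_{k-1}} = \HEIGHT_{n_{i_{k-1}}}$ (immediate from the definition of $i_{k-1}$, since $w_{i_{k-1}} > m_{i_{k-1}-1}$). For the base case $k = 1$, the strategy is to trace the ancestor sequence $n_0 = 0, n_1, n_2, \ldots$ and show that it stays strictly to the right of $\LEFT_{n_0}$, with all associated box heights bounded by $\HEIGHT_{n_0}$, until it lands exactly on $\LEFT_{n_0}$; at that step, the height jumps strictly above $\HEIGHT_{n_0}$. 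The principal tools are Lemmas~\ref{lem:leftboxtaller} and~\ref{lem:boxcontainment}, which apply to $\IPC(\pf,\Z)$ by the remark following Lemma~\ref{lem:consistent}.

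The argument uses two easy observations. First, $(n_i)$ is strictly decreasing, since $n_{i+1} = \lfloor x(p_{n_i}) \rfloor \in [\LEFT_{n_i}, n_i)$. Second, I would establish by induction on $i$ the following dichotomy: either (A) $n_i > \LEFT_{n_0}$ and $\BOX_{n_i} \subseteq \BOX_{n_0}$ (so in particular $w_i \leq w_0$ and $\LEFT_{n_i} \geq \LEFT_{n_0}$), or (B) $n_i = \LEFT_{n_0}$ and $w_i > w_0$. For (A), when $\LEFT_{n_0} < n_i < n_0$ the x-projections $[\LEFT_{n_i}, n_i)$ and $[\LEFT_{n_0}, n_0)$ overlap (both contain $n_i - 1$), so $\BOX_{n_i}$ and $\BOX_{n_0}$ are not disjoint, and Lemma~\ref{lem:boxcontainment} forces $\BOX_{n_i} \subseteq \BOX_{n_0}$; this gives $\LEFT_{n_i} \geq \LEFT_{n_0}$ and hence $n_{i+1} \geq \LEFT_{n_0}$, which feeds the induction. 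Alternative (B) is just Lemma~\ref{lem:leftboxtaller} applied at $\LEFT_{n_0}$.

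To conclude, $(n_i)$ is a strictly decreasing integer sequence that stays $\geq \LEFT_{n_0}$ as long as (A) holds, so it must first land on $\LEFT_{n_0}$ at some finite step $i^*$. For $j < i^*$, (A) gives $w_j \leq w_0$, whence $m_j = m_0$; at $j = i^*$, (B) gives $w_{i^*} > w_0 = m_{i^*-1}$, whence $m_{i^*} > m_{i^*-1}$. By the definition of $i_1$ as the first index where $m$ strictly exceeds $m_0$, we obtain $i^* = i_1$, giving $n_{i_1} = \LEFT_{n_0}$ and $m_{i_1} = w_{i_1} = \HEIGHT_{n_{i_1}}$. I do not anticipate any serious obstacle: the delicate step is just the bookkeeping required to verify the dichotomy, which is a direct consequence of the geometric structure of the boxes already captured by Lemmas~\ref{lem:leftboxtaller} and~\ref{lem:boxcontainment}.
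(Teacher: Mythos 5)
Your proof is correct and follows essentially the same route as the paper's one-paragraph argument: it hinges on Lemma~\ref{lem:boxcontainment} (intermediate ancestors have their boxes inside $\BOX_{n_{i_{k-1}}}$, so the backward maximum stays constant) and Lemma~\ref{lem:leftboxtaller} (the height jumps strictly at $\LEFT_{n_{i_{k-1}}}$). The only cosmetic difference is that you re-derive, via the dichotomy induction, the fact that $\LEFT_{n_{i_{k-1}}}$ is reached by the ancestor sequence, which the paper obtains directly by citing Lemma~\ref{lem:boxpath}.
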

\begin{pf}
The fact that $n_{i_k} \leq\ell(n_{i_{k-1}})$ is immediate from
Lemma~\ref{lemboxcontainment}. But $\ell= \ell(n_{i_{k-1}})$ is an
ancestor of $n_{i_{k-1}}$ by Lemma~\ref{lemboxpath}, and $h_{\ell} >
h_{n_{i_{k-1}}}$ by Lemma~\ref{lemleftboxtaller}. Thus, $\ell(n_{i_{k-1}}) = n_{i_k}$ as claimed.
\end{pf}

Above, we derived the joint distribution of the height and length of
$\BOX_0$. We next show that the sequence $\{m_{i_k}\}_{k \in\N}$ has a
particularly simple and pleasing description.
\begin{lem}\label{uniformdist}
The sequence $\{m_{i_k}\}_{k \in\N}$ is a homogeneous Markov chain,
and for all $k$, given $m_{i_k}$, $m_{i_{k+1}}$ has distribution
$\operatorname{Uniform}[m_{i_k},1]$.
\end{lem}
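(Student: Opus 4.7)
The plan is to compute the joint density of $(m_{i_j}, b_j)_{j=0}^K$, where $b_j:=n_{i_j}-n_{i_{j+1}}=|\BOX_{n_{i_j}}|$, then marginalize over the $b_j$'s and read off the conditional distribution of $m_{i_K}$ given $(m_{i_0},\ldots,m_{i_{K-1}})$. By translation invariance of $\pf$ I take $n_{i_0}=0$, and for positive integers $b_0,\ldots,b_K$ I set $N_{j+1}:=-(b_0+\cdots+b_j)$, so that $N_j = n_{i_j}$ by Lemma~\ref{leftheightsame}.

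Applying Lemma~\ref{lem:equivalent} successively to each box, the event $\{\HEIGHT_{N_j}\in dh_j,\ \LEFT_{N_j}=N_{j+1}:\ j=0,\ldots,K\}$ is the conjunction of the internal structural events $E_j,F_j$ of $\BOX_{N_j}$ together with the tail conditions $G_j:=G(N_{j+1},h_j^-)$ for $j=0,\ldots,K$. The key technical point is that $G_j$ is redundant for every $j<K$. When $m\le b_{j+1}$, the rectangle $[N_{j+1}-m,N_{j+1})\times[0,h_j)$ is contained in the taller rectangle featured in $F_{j+1}$, so $G_j$ at this $m$ is immediate. When $m>b_{j+1}$, write $m=b_{j+1}+n$, split the rectangle at $x=N_{j+2}$, and let $X_i$, $Y_i$ be the counts on unit strips to the left of $N_{j+2}$ at heights $[0,h_j)$ and $[h_j,h_{j+1})$ respectively: condition $G_{j+1}$ asserts $\sum_{i\le n}(X_i+Y_i)<n$, which since $Y_i\ge 0$ forces $\sum_{i\le n}X_i<n<n+(b_{j+1}-A_j)$, where $A_j\le b_{j+1}-1$ counts the points of $\BOX_{N_{j+1}}$ at heights $<h_j$; this is exactly what $G_j$ demands. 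Thus only $G_K$ must be imposed directly.

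Decomposing $\pf$ into the disjoint independent regions consisting of (i) the interior of each $\BOX_{N_j}$ (carrying $E_j,F_j$), (ii) the left-tail strip $(-\infty,N_{K+1})\times[0,h_K)$ (carrying $G_K$), and (iii) the remaining completely unconstrained regions, the standard Poisson-count / top-point-density / cycle-lemma computations -- exactly as performed in Lemma~\ref{heightlengthcond} -- contribute a factor $\frac{(b_jh_j)^{b_j-1}e^{-b_jh_j}}{(b_j-1)!}\,dh_j$ from each box. The tail contributes $\p{L^{N_{K+1},h_K^-}\text{ has no chance}}=1-h_K$ by the queueing connection of Section~\ref{queueing}. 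Multiplying,
\[
\p{(m_{i_j},b_j)_{j=0}^K\in\textstyle\prod_j dh_j\times\{b_j\}} \;=\; (1-h_K)\prod_{j=0}^K \frac{(b_jh_j)^{b_j-1}e^{-b_jh_j}}{(b_j-1)!}\,dh_j.
\]

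Summing each $b_j\ge 1$ using the Borel mean identity $\sum_{b\ge 1}\tfrac{(bh)^{b-1}e^{-bh}}{(b-1)!}=\tfrac{1}{1-h}$ (Section~\ref{queueing}) yields, on $\{0<h_0<\cdots<h_K<1\}$,
\[
\p{(m_{i_j})_{j=0}^K\in\textstyle\prod dh_j} \;=\; (1-h_K)\prod_{j=0}^K\frac{1}{1-h_j} \;=\; \prod_{j=0}^{K-1}\frac{1}{1-h_j},
\]
the factor $(1-h_K)$ having cancelled the summation at level $K$. The ratio with the analogous expression at level $K-1$ shows that the conditional density of $m_{i_K}$ given $(m_{i_0},\ldots,m_{i_{K-1}})$ is $\tfrac{1}{1-h_{K-1}}$ on $[h_{K-1},1]$, depending on the past only through $h_{K-1}=m_{i_{K-1}}$. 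This establishes both the homogeneous Markov property and the $\mathrm{Uniform}[m_{i_k},1]$ transition kernel. The principal obstacle is the redundancy argument verifying that the intermediate $G_j$'s need not be imposed by hand; the rest is bookkeeping built on identities already developed in the paper.
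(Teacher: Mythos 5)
Your proposal is correct, but it takes a genuinely different route from the paper's. The paper's proof is a one-line probability ratio: it uses the queueing/ballot correspondence of Section~\ref{queueing} to observe that $\HEIGHT_k \leq y$ if and only if the leftward random walk $L^{k,y}$ has a chance (which happens with probability $y$), and that conditioning on $m_{i_{k-1}}=m$ together with $n_{i_k}$ tells you \emph{precisely} that $L^{n_{i_k},m}$ has no chance for the Poisson process to the left of $n_{i_k}$. The conditional probability $\probC{m_{i_k}\leq y}{m_{i_{k-1}}=m}$ then drops out as $\frac{\p{L \text{ has a chance at } y} - \p{L \text{ has a chance at } m}}{\p{L \text{ has no chance at } m}} = \frac{y-m}{1-m}$, with no explicit density computation and no need to decompose the region into boxes. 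Your route instead grinds through the joint density of $(m_{i_j},b_j)_{j=0}^K$ via Lemma~\ref{lem:equivalent}, which forces you to verify the redundancy of the intermediate tail conditions $G_j$ -- your argument for this is correct, and it is the genuine extra idea your approach needs -- and then to invoke the Borel mean identity $\sum_{b\geq 1}\frac{(bh)^{b-1}e^{-bh}}{(b-1)!}=\frac{1}{1-h}$ (which is not literally stated in Section~\ref{queueing}, only the Borel size distribution from which it follows immediately) to telescope the marginal density to $\prod_{j<K}\frac{1}{1-h_j}$. Your approach is longer but buys more: it yields the full joint law of the ancestral boxes and recovers Lemma~\ref{heightlengthcond} as the $K=0$ case, whereas the paper's argument is surgical and extracts only the transition kernel. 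Both are valid; the paper's is the more economical derivation of the stated lemma, while yours would be the natural proof if one wanted the joint box-size/box-height statistics as well.
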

\begin{pf}
For all $0 < y < 1$ and all $k$, $\HEIGHT_k \leq y$ if and only if the
random walk $L^{k,y}$ has a chance. As remarked in Section~\ref{queueing},
the probability of this is precisely~$y$.
Given $m_{i_{k-1}}$ and $n_{i_k}$, by Lemmas~\ref{lemleftboxtaller}
and~\ref{leftheightsame}, we know precisely that $m_{i_k}=\HEIGHT
_{n_{i_k}} > \HEIGHT_{n_{i_{k-1}}} = m_{i_{k-1}}$.
In other words, we know precisely that the random walk
$L^{n_{i_k},m_{i_{k-1}}}$ has no chance. Thus, for $0 < m < y < 1$,
\begin{eqnarray*}
&&\mathbf{P}\{ m_{i_k} \leq y \mid m_{i_{k-1}}=m \}\\
&&\qquad{}= \mathbf{P}\{
\HEIGHT_{n_{i_k}} \leq y \mid m_{i_{k-1}}=m \} \\
&&\qquad{}=\mathbf{P}\{ L^{n_{i_k},y} \mbox{ has a chance} \mid L^{n_{i_k},m}
\mbox{ has no chance} \} \\
&&\qquad{}= \frac{1 - {\mathbf{P}}\{L^{n_{i_k},y} \mbox{ has no chance}\} -
{\mathbf{P}}\{L^{n_{i_k},m} \mbox{ has a chance}\}}{{\mathbf{P}}\{
L^{n_{i_k},m} \mbox{ has no chance}\}} \\
&&\qquad{}= \frac{y-m}{1-m},
\end{eqnarray*}
which proves the lemma.
\end{pf}

Note also, by the first remark in the proof of the lemma, we have the
following proposition.
\begin{prop}[(\cite{mcdiarmid97mst}, Theorem 2)]
$\HEIGHT_0 \eqdist\operatorname{Uniform}[0,1]$.
\end{prop}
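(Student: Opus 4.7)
The plan is to extract the proposition as a direct corollary of the computation already carried out in the proof of Lemma~\ref{uniformdist}. The core observation is that $\HEIGHT_0$ admits a clean description in terms of the leftward walk: for any $0 < y < 1$, the event $\{\HEIGHT_0 \leq y\}$ is, by Definition~\ref{dfn:boxes}, exactly the event that there exists some $i > 0$ with $|[-i,0)\times[0,y]|_{\pf} \geq i$, i.e.\ that $L^{0,y}_i \geq 0$ for some $i > 0$. In other words, $\{\HEIGHT_0 \leq y\}$ coincides with the event that $L^{0,y}$ \emph{has a chance} in the terminology of Section~\ref{sec:rw}.

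Next, I would invoke the queueing/ballot calculation from Section~\ref{queueing}, where (via Lemma~\ref{statballot} applied to the mean-$(y-1)$ increments of $L^{0,y}$) it is shown that for $0 < y \leq 1$ the probability that $L^{0,y}$ has a chance equals exactly $y$. Combining these two facts gives $\p{\HEIGHT_0 \leq y} = y$ for all $y \in (0,1)$.

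Finally, to conclude that $\HEIGHT_0$ is uniformly distributed on $[0,1]$ I just need $\p{\HEIGHT_0 \leq 1} = 1$, which is immediate from Lemma~\ref{lem:weightone} (applied to the $\Z$-seemly pointset $\pf$ at $n=0$), together with the fact that $\HEIGHT_0 > 0$ almost surely since the lowest point of $\pf$ in any vertical strip is a.s.\ strictly positive. Thus the distribution function of $\HEIGHT_0$ is the identity on $[0,1]$, proving the proposition.

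There is no genuine obstacle here: the statement is essentially a one-line reading of the first two sentences of the proof of Lemma~\ref{uniformdist}, specialized to $k=0$ and without any conditioning on a previous value $m_{i_{k-1}}$. The only mildly delicate point is ensuring the endpoint issues (whether one uses $L^{0,y}$ or $L^{0,y^-}$) do not matter, which follows from condition~\ref{IPWIT3} for $\pf$: almost surely no two points share a $y$-coordinate, so the two walks coincide for every $i$ and the ``has a chance'' probabilities agree.
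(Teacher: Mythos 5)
Your proof is correct and follows exactly the paper's approach: the paper derives this proposition directly from the opening remark in the proof of Lemma~\ref{uniformdist}, namely that $\{\HEIGHT_0 \leq y\}$ is the event that $L^{0,y}$ has a chance, which has probability $y$ by the queueing/ballot argument in Section~\ref{queueing}. The extra care you take with the endpoint cases ($y=0$, $y=1$) and the $L^{0,y}$ versus $L^{0,y^-}$ distinction is sound but is left implicit in the paper.
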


We now prove a more substantial result, about the structure of the
portion of $\IPC(\pff,\Z)$ that
lives ``under the backward maximum process.'' It essentially states
that, like the forward maximal process, the portion of $\IPC(\pff,\Z)$
that lives
under the backward maximum process looks like a single infinite
backbone, to which subcritical Poisson Galton--Watson trees are\vadjust{\goodbreak}
attached at each point.
Also, these subcritical trees become closer and closer to critical the
further along the backbone from $0$ they are.
\begin{thmm}\label{ipcminus} Let $\IPC^{-}(\pff,\Z)$ denote the
restriction of $\IPC(\pff,\Z)$ to the nonpositive integers, and let $0$
be its root. Then $\IPC^{-}(\pff,\Z)$ is distributed as~$\mathcal
{T}_{\mathrm{IIC}}^{*}$.
\end{thmm}

We will prove this theorem at the end of the section.
For each $k \geq0$, let $\pff_k = \pff\cap
([n_{i_{k+1}},n_{i_k}]\times
[0,m_{i_k}))$ and let $\mathcal I_k = \{
n_{i_{k+1}},n_{i_{k+1}}+1,\ldots
,n_{i_k}\}$.
By Lemma~\ref{ipcpgw}, given $(n_{i_{k+1}}-n_{i_k})$, $\IPC(\pff
_k,\mathcal I_k)$ is distributed as $\pgw(m_{i_k})$ conditioned to have
$(n_{i_{k+1}}-n_{i_k})$ nodes, together
with a single additional node (the node $n_{i_k}$) attached to a
uniform vertex.
\begin{thmm}\label{underbackward}
For all $k \geq0$ and $0 < m < 1$, given that $m_{i_k}=m$,\break $\IPC(\pff
_k,\mathcal I_k)$ is distributed as a path with $(1+\operatorname
{Geometric}(m))$ edges, from $n_{i_{k+1}}$ to $n_{i_k}$,
with an independent $\pgw(m)$ tree attached to each node of the path
except $n_{i_k}$.
\end{thmm}
\begin{pf}
The proof uses a correspondence between $\IPC(\pff_k,\mathcal I_k)$ and
a~pond of $\IPC(\pff,\N)$ of appropriate height.
Let $\lambda> 1$ be such that $me^{-m} = \lambda e^{-\lambda}$, so
then $m = \lambda(1-\theta(\lambda))$. For all $n \geq1$, by Lemma
\ref
{heightlengthcond} we have
\begin{eqnarray*}
\mathbf{P}\{ n_{i_{k+1}}-n_{i_k}=n \mid m_{i_k}=m \} & =& (1-m)\cdot
\frac
{(mn)^{n-1} e^{-mn}}{(n-1)!}\\
& =& \frac{n^{n-1}}{(n-1)!} (me^{-m})^n \frac{1-m}{m} \\
& = &\frac{n^{n-1}}{(n-1)!} (\lambda e^{-\lambda})^n \frac{1-\lambda
(1-\theta(\lambda))}{\lambda(1-\theta(\lambda))} \\
& =& \frac{n^{n-1}}{(n-1)!} (\lambda e^{-\lambda})^n \frac{\theta
(\lambda)}{\lambda\theta'(\lambda)} \\
& =& \frac{\theta(\lambda)}{\theta'(\lambda)}\frac{(\lambda n)^{n-1}
e^{-\lambda n}}{(n-1)!}.
\end{eqnarray*}
By Corollary~\ref{forwardsizes}, the latter is the probability that a
pond of $\IPC(\mathcal P,\N)$, conditioned to have height $\lambda$,
has size $n$.
Thus, $\IPC(\pff_k,\mathcal I_k)$ is distributed as a pond of $\IPC
(\mathcal P,\N)$ conditioned to have height $\lambda$. By Theorem
\ref{geom},
it follows that the length of the path from $n_{i_{k+1}}$ to $n_{i_k}$
has distribution $1+\operatorname{Geometric}(\lambda(1-\theta(\lambda)))
\eqdist1+\operatorname{Geometric}(m)$. Furthermore,
by Theorem~\ref{offbackbone}, to each vertex of the path except
$n_{i_k}$ is attached an independent copy of $\pgw(m)$. This completes
the proof.
\end{pf}

Having proved Theorem~\ref{underbackward}, we are now prepared for the
last ingredient needed for the proofs of Theorems~\ref{main1} and \ref
{ipcminus}.\vadjust{\goodbreak}
\begin{thmm}\label{iic}
$\IPC(\pff,\Z)$ is distributed as the Poisson IIC, in the local weak sense.
\end{thmm}
\begin{pf*}{Proof of Theorem \protect\ref{iic}}
For each $j \geq0$, let $T_j$ be the subtree of $\IPC(\pff,\Z)$ rooted
at $n_j$ and containing all nodes reachable from $n_j$ without passing
through $n_{j-1}$ or $n_{j+1}$
(so $T_j$ contains neither $n_{j-1}$ nor $n_{j+1}$).
We show that independently for each $j$, $T_j$ is distributed as $\pgw
(1)$, which proves the theorem.

For each $j$, let $k=k(j)$ be the largest integer $k$ for which $i_k
\leq j$. Let $U_j$ be the
subtree of $T_j$ containing only nodes of index less than $n_{k}$ (in
other words, $U_j$ is the subtree of $T_j$ which lives under the
backward maximum process). Also,
let $V_j$ be the subtree containing $n_j$ and all nodes of $T_j$ not in~$U_j$.
Then by Theorem~\ref{underbackward}, $U_j$ is distributed as $\pgw
(m_{i_{k(j)}})$, independently of $\{U_{j'}\}_{j' \neq j}$.

Next, for each node $\ell\in U_j$, the number of children of $\ell$ in
$\IPC(\pff,\Z)$ that are not in $U_j$ is precisely the number of points
of $\pff$
in $[\ell-1,\ell)\times(m_{i_{k(j)}},1)$, and therefore has
$\operatorname
{Poisson}(1-m_{i_{k(j)}})$ distribution. Furthermore, all such children
have strictly positive index by the
definition of $U_j$.

Finally, as in Theorem~\ref{pgw1}, let $r_1 = \min( i > 0 \dvtx
\lfloor
x(p_i) \rfloor\in U_j )$. Then $r_1$ has a $\operatorname{Poisson}(1)$
number of children, independently of $U_j$.
More generally, exposing the descendants of $U_j$ in a left-to-right
fashion as in Theorem~\ref{pgw1}, we see that each descendant of $U_j$
with positive index has a
$\operatorname{Poisson}(1)$ number of children, independently of all
the others.

To sum up: $U_j$ is distributed as $\pgw(m_{i_{k(j)}})$; each node of
$U_j$ independently has $\operatorname{Poisson}(1-m_{i_{k(j)}})$
children in
$T_j \setminus U_j$; and
each of these children is the root of a $\pgw(1)$ tree, independently
of each other and of $U_j$. It follows that $T_j$ is distributed as
$\pgw(1)$, as claimed.
\end{pf*}
\begin{pf*}{Proof of Theorem \protect\ref{main1}}
The two graphs $T$ and $T'$ are $\BOXES(\pff,\Z)$ and $\IPC(\pff
,\Z)$.
Part (a) follows from Lemma~\ref{lemboxgraph2}. Part (b) is trivial.
Part (c) follows from the example given in Figure~\ref{example2}.
Part (d) is trivial. Part (e) follows from Theorems~\ref{boxesiic}
and~\ref{iic}.
\end{pf*}
\begin{pf*}{Proof of Theorem \protect\ref{main2}}
For each $n$, $\IPC(\pff,\{-n,-n+1,\ldots\})$, viewed as rooted at $0$,
is distributed as $\IPC(\pff,\N)$, viewed as rooted at $n$. The theorem
then follows from the fact that
$\IPC(\pff,\{-n,-n+1,\ldots\}) \to\IPC(\pff,\Z)$ almost surely.
\end{pf*}
\begin{pf*}{Proof of Theorem \protect\ref{main3}}
Let $q\dvtx[1,\infty) \to(0,1]$ be the unique map satisfying the implicit
equation $\frac{d\theta(q(\lambda))}{d\lambda} = -1$ for all
$\lambda
\in[0,\infty)$. (It is possible to write down a more detailed---though still implicit---formula for $q$, but this is unilluminating
and we omit it.) If $W$ is a random variable satisfying\vspace*{-2pt} ${\mathbf{P}}\{
W \leq x\} =
\theta(x)$ for all $x \geq1$, then $q(W) \eqdist\operatorname
{Uniform}[0,1]$, from which the\vadjust{\goodbreak}
first part of the theorem follows immediately. The second part of
Theorem~\ref{main3} then follows from the first, together with
Theorems~\ref{geom}, and~\ref{offbackbone}.
\end{pf*}
\begin{pf*}{Proof of Theorem \protect\ref{ipcminus}}
Let $U_0,U_1,U_2, \ldots$ be the subtrees of $\IPC(\pff,\Z)$ introduced
in the proof of Theorem~\ref{iic}. $\IPC^{-}(\pff,\Z)$ consists exactly
of an infinite backbone path through vertices $n_0,n_1,n_2,\ldots,$
with $U_{j}$ attached at~$n_j$ for each \mbox{$j\ge0$}. Each $U_j$ is
distributed as $\PGW(m_{i_{k(j)}})$. So all that remains to prove the
theorem is to prove that the sequence $(m_{i_{k(j)}})$ is distributed
as the sequence $(M_{j})_{j\ge0}$ defined in the \hyperref[secintro]{Introduction}.
This follows from Theorem~\ref{underbackward} [which states that the
backward maximum process weights $m_{i_{k(j)}}$ stay constant for one
plus a geometric number of values of $j$, with parameter dependent on
the current weight $m_{i_{k(j)}}$, and
the fact that the sequence $(m_{i_{k}})_{k\ge0}$ is as described in
Lemma~\ref{uniformdist}].
\end{pf*}
%

%
%


\printaddresses

\end{document}